\newcommand{\define}{\textbf}
\newcommand{\comment}{$\star$ \texttt}
\newcommand{\isom}{\cong}
\renewcommand{\setminus}{\smallsetminus}
\renewcommand{\phi}{\varphi}
\renewcommand{\tilde}{\widetilde}
\renewcommand{\bar}{\overline}
\renewcommand{\wedge}{\bigwedge}
\newcommand{\C}{\mathbb{C}}
\newcommand{\R}{\mathbb{R}}
\newcommand{\N}{\mathbb{N}}
\newcommand{\Z}{\mathbb{Z}}
\renewcommand{\P}{\mathbb{P}}
\newcommand{\mth}{\mathrm{th}}
\newcommand{\lef}{\left\langle}
\newcommand{\rig}{\right\rangle}
\DeclareMathOperator{\Sym}{Sym}
\DeclareMathOperator{\tr}{tr}
\DeclareMathOperator{\Hom}{Hom}
\DeclareMathOperator{\Spec}{Spec}
\DeclareMathOperator{\Ind}{Ind}
\DeclareMathOperator{\prim}{prim}
\DeclareMathOperator{\Int}{Int}
\DeclareMathOperator{\Proj}{Proj}
\DeclareMathOperator{\class}{class}
\DeclareMathOperator{\st}{st}
\DeclareMathOperator{\vol}{vol}
\DeclareMathOperator{\BOX}{Box}
\DeclareMathOperator{\conv}{conv}
\DeclareMathOperator{\sgn}{sgn}
\DeclareMathOperator{\coker}{coker}
\newtheorem{theorem}{Theorem}[section]
\newtheorem*{ntheorem}{Theorem}
\newtheorem{lemma}[theorem]{Lemma}
\newtheorem{proposition}[theorem]{Proposition}
\newtheorem{corollary}[theorem]{Corollary}
\newtheorem*{ncor}{Corollary}
\theoremstyle{definition}
\newtheorem{definition}[theorem]{Definition}
\newtheorem{remark}[theorem]{Remark}
\newtheorem{example}[theorem]{Example}
\newtheorem{conjecture}[theorem]{Conjecture}
\newtheorem*{nconj}{Conjecture}
\newcommand{\excise}[1]{}
\begin{document}%%%%%%%%%%%%%%%%%%%%%%
%%%%%%%%%%%%%%%%%%%%%%%%%%%%%%%%%%%%%%

%%%%%%%%%%%%%%%%%%%%%%%%%%%%%%%%%%%%%%%%%%
\title{Representations on the cohomology of hypersurfaces and mirror symmetry}
\author{Alan Stapledon}
\address{Department of Mathematics\\University of British Columbia\\ BC, Canada V6T 1Z2}
\email{astapldn@ubc.math.ca}

\keywords{}

\date{April 18, 2010} 
\thanks{
Part of this work was completed while visiting Sydney University, with funding from
%with assistance from 
the Australian Research Council project DP0559325, Chief Investigator Professor G I Lehrer. 
%%This work was initiated while %the author was 
%%visiting Sydney University, funded in part with assistance from the Australian Research Council project DP0559325, Chief Investigator Professor G I Lehrer. 
%This work %was initiated A.S.
 %was funded in part with assistance from the Australian Research Council project DP0559325 at Sydney University, Chief Investigator Professor G I Lehrer. 
}

%\subjclass[2000]{14L30, 14M25,  52B20.}
%	14L30  	Group actions on varieties or schemes (quotients)
%14M25  	Toric varieties, Newton polyhedr
% 52B20  	Lattice polytopes

\begin{abstract}
We study the representation of a finite group acting on the cohomology of a non-degenerate, invariant hypersurface of a projective toric variety. We deduce an explicit description of the representation when the toric variety has at worst quotient singularities. As an application, we conjecture a representation-theoretic version of Batyrev and Borisov's mirror symmetry between pairs of Calabi-Yau
hypersurfaces, and prove it when the hypersurfaces are both smooth or have dimension at most $3$. 
An interesting consequence is the existence of pairs of Calabi-Yau orbifolds whose Hodge diamonds are mirror, with respect to the usual Hodge structure on singular cohomology. % rather than orbifold cohomology. 
\end{abstract}

\maketitle
%%%%%%%%%%%%%%%%%%%%%%%%%%%%%%%%%%%%%%%%%%%%%%%%%%%

%%%%%%%%%%%%%%%%%%%%%%%%%%%%%%%%%%%%%%%%%%%%%%%%%%%
\section{Introduction}%%%%%%%%%%%%%%%%%%%%%%%%%%%%%
%%%%%%%%%%%%%%%%%%%%%%%%%%%%%%%%%%%%%%%%%%%%%%%%%%%

 When a finite group $G$ acts algebraically on a complex variety $Z$, it is an important problem to determine the corresponding representation of $G$ on the complex cohomology $H^*Z$ of $Z$.
 In particular, if $Z$ is complete and has at worst quotient singularities, then 
 the Hodge structure of the cohomology of $Z/G$ is determined by the isomorphism $H^* (Z/G)  \cong (H^* Z)^G$. 
 %there are isomorphisms
 %of pure Hodge structures $(H^k Z)^G \cong H^k (Z/G)$. %for every $k$.  
%If a finite group $G$ acts algebraically on a complex variety $Z$, then it is an important problem to determine the corresponding representation of $G$ on the complex cohomology $H^*Z$ of $Z$.
 We refer the reader to the work of Dimca and Leher \cite{DLPurity}, 
 Cappell, Maxim, Schuermann, Shaneson  \cite{CLSSEquivariant, CMSEquivariant}, and Ch\^enevert \cite{CheRepresentations} for recent developments on this topic. In the case when $Z$ is a toric variety associated to root system and $G$ is the associated Weyl group, the corresponding representation $H^*Z$ has been studied by  Procesi~\cite{ProToric}, Stanley \cite[p. 529]{StaLog}, Dolgachev, Lunts~\cite{DLCharacter}, Stembridge~\cite{SteSome, SteEulerian} and Lehrer~\cite{LehRational}. 
The purpose of this article is to study the representation $H^*Z$ in the case when $Z$ is an invariant hypersurface of a toric variety. 

Let $G$ be a finite group with representation ring $R(G)$. 
Let $\rho: G \rightarrow GL(M)$ be a linear action of $G$ on a lattice $M \cong \Z^d$, and 
consider the corresponding action of $G$ on the torus $T = \Spec \C[M]$. Let 
$X^\circ = \{ \sum_{u \in M} a_u \chi^u = 0 \} \subseteq T$ be a $G$-invariant hypersurface which is \define{non-degenerate} with respect to its Newton polytope $P = \conv\{ u \mid a_u \ne 0 \}$ (see Section~\ref{toric}).
Then the normal fan to $P$ determines a projective toric variety $Y = Y_P$, and the action of $G$ on $T$ extends to an action of $G$ on $Y$ via toric morphisms. The closure $X$ of $X^\circ$ in $Y$ is a $G$-invariant, projective variety.

%For any %$n$-dimensional 
%complex variety $Z$ with $G$-action, we introduce the \define{equivariant Hodge-Deligne polynomial} $E_G(Z;u, v) = \sum_{p,q} 
%e^{p,q}_G u^p v^q  \in R(G)[u,v]$ of $Z$ (see Section~\ref{s:HodgeDeligne}), which satisfies the property that if $U$ is a $G$-invariant open subvariety of $Z$, then $E_G(Z)  = E_G(U) + E_G(Z \setminus U)$. 
%If $Z$ is complete and has at worst quotient singularities, then 

For any %$n$-dimensional 
complex variety $Z$ with $G$-action, we introduce the \define{equivariant Hodge-Deligne polynomial} $E_G(Z;u, v) = \sum_{p,q} 
e^{p,q}_G u^p v^q  \in R(G)[u,v]$ of $Z$ (see Section~\ref{s:HodgeDeligne}), satisfying the following 
properties 
\begin{enumerate}
\item\label{i:1} if $U$ is a $G$-invariant open subvariety of $Z$, then \\ $E_G(Z)  = E_G(U) + E_G(Z \setminus U)$, 
\item\label{i:2} if $Z$ is complete and has at worst quotient singularities, then \\
$E_G(Z) = \sum_{p,q} (-1)^{p + q} H^{p,q}(Z)  u^p v^q$. 
\end{enumerate}
%which satisfies the property that if $U$ is a $G$-invariant open subvariety of $Z$, then $E_G(Z)  = E_G(U) + E_G(Z \setminus U)$. 
%If $Z$ is complete and has at worst quotient singularities, then 
%$E_G(Z) = \sum_{p,q} (-1)^{p + q} H^{p,q}(Z)  u^p v^q$ encodes the representations of $G$ on the $(p,q)$-pieces of the cohomology of $Z$ (Example~\ref{e:smooth}). 
This generalizes the usual notion of Hodge-Deligne polynomial when $G$ is trivial, and reduces to both the \emph{weight polynomial} 
$E_G(Z; t, t)$ of Dimca and Lehrer \cite{DLPurity}, and the  \emph{equivariant $\chi_y$-genus} $E_G(Z; u, 1)$ of Cappell, Maxim and Shaneson \cite{CMSEquivariant}. 
Our first main result is an explicit algorithm to determine $E_G(X^\circ;u, v)$. 
We refer the reader to  Section~\ref{s:ehd} for details. 
% (see Section~\ref{s:ehd}).
%In Section~\ref{s:ehd}, we prove an algorithm to determine $E_G(X^\circ;u, v)$.  
In particular, the algorithm determines the representations of $G$ on the pieces of the mixed Hodge structure of the cohomology of $X^\circ$ with compact support (Remark~\ref{r:Hodge-Deligne}). 
By the additivity property \eqref{i:1}, %for equivariant Hodge-Deligne polynomials, 
one can then inductively compute $E_G(X)$, and  hence, by \eqref{i:2}, we deduce the representations
of $G$ on the  $(p,q)$-pieces of the cohomology of $X$. 
%When $G$ is trivial, this reduces to a well-known algorithm of Danilov and Khovanski{\u\i} \cite{DKAlgorithm}.

In order to state our results more precisely, we recall a combinatorial construction which was introduced and studied in \cite{YoEquivariant}. % motivated precisely by this paper. 
%If $G$ acts on a set $S$, then we write $\chi_{\lef S \rig}$ for the corresponding complex permutation representation. 
For any positive integer $m$, $G$ permutes the lattice points in the $m^{\mth}$ dilate of $P$, and we may consider the corresponding permutation representation $\chi_{mP}$. % = \chi_{\lef mP \cap M \rig}$. 
Motivated precisely by the computations in this paper, the author introduced  a power series 
of virtual representations $\phi[t] = \sum_{i \ge 0} \phi_i t^i \in R(G)[[t]]$, 
%where $R(G)$ denotes the representation ring of $G$, 
determined by the equation %following equation in $R(G)[[t]]$,
\begin{equation*}
1 + \sum_{m \ge 1} \chi_{mP} t^m = \frac{ \phi[t]}
{(1- t)(1 - \rho \, t + \wedge^2  \rho \,  t^2 - \cdots + (-1)^d \wedge^d \rho \,  t^d)}.  
\end{equation*}
While the power series $\phi[t]$ is not a polynomial for  general $G$ and $P$ (see \cite[Section~7]{YoEquivariant}), 
we prove that the existence of a $G$-invariant, non-degenerate hypersurface with Newton polytope $P$ implies that $\phi[t]$ is a polynomial, and the virtual representations $\phi_i$ are effective representations (Corollary~\ref{c:vanishing}). 
If we let $\det(\rho) = \bigwedge^{d} \rho$, then the theorem below computes the equivariant $\chi_y$-genus $E_G(X^\circ; u, 1)$ of $X^{\circ}$.

%Let $\det(\rho) = \bigwedge^{d} \rho$ denote the determinant character associated to the representation$\rho: G \rightarrow GL(M)$. The theorem below computes the equivariant $\chi_y$-genus of $X^{\circ}$. 

\begin{ntheorem}[Theorem~\ref{t:xycharacteristic}]%(cf. \cite[4.4]{DKAlgorithm})
%With the notation above,
For any $p \ge 0$, 
\[
\sum_q e^{p,q}_G(X^\circ) = (-1)^{d - 1 - p} \bigwedge^{d - 1 - p} \rho + (-1)^{d - 1}\det(\rho) \cdot \phi_{p + 1}. 
\]
\end{ntheorem}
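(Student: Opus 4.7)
The plan is to compute $E_G(X^\circ;u,1)$ via equivariant additivity combined with a toric evaluation of holomorphic Euler characteristics, and then extract the coefficient of $u^p$ using the defining identity of $\phi[t]$.

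By property \eqref{i:1} applied to the open immersion $X^\circ\hookrightarrow T$, one can write $E_G(X^\circ;u,1)=E_G(T;u,1)-E_G(T\setminus X^\circ;u,1)$. The first summand is a standard computation: the cohomology of the torus $T$ is isomorphic to $\bigwedge^{*}M_{\C}^{\vee}$ as a mixed Hodge structure, pure of type $(k,k)$ in degree $k$, so $E_G(T;u,1)$ is an alternating sum of exterior powers of $\rho$ in $u$. After Poincar\'e duality for compactly supported cohomology, this contributes the ``trivial'' summand $(-1)^{d-1-p}\bigwedge^{d-1-p}\rho$ to the coefficient of $u^p$, once combined with the leading term from the complement.

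For $E_G(T\setminus X^\circ;u,1)$, I would pass to a smooth $G$-equivariant refinement $\tilde Y\to Y_P$ in which the proper transform $\tilde X$ of the closure $X$ is smooth and meets the toric boundary transversally (possible by the non-degeneracy hypothesis). Equivariant additivity together with the adjunction sequence for $\tilde X\subset\tilde Y$ and the Koszul resolution of $\O_{\tilde X}$ reduces the computation to equivariant holomorphic Euler characteristics of the form $\chi^G(\tilde Y,\Omega^{p}_{\tilde Y}\otimes\O(-m\tilde X))$. By equivariant Demazure vanishing on the smooth toric variety $\tilde Y$, these equal (up to sign) the characters of $H^0(\tilde Y,\O(m\tilde X))$, which are precisely $\chi_{mP}$ via the toric dictionary identifying $\O(m\tilde X)$ with the line bundle associated to $mP$. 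This expresses $E_G(T\setminus X^\circ;u,1)$ as a manipulation of the generating function $\sum_{m}\chi_{mP}\,t^m$.

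Finally, invoking the defining equation
\[
1+\sum_{m\ge 1}\chi_{mP}\,t^m=\frac{\phi[t]}{(1-t)\sum_{i=0}^{d}(-1)^i\bigwedge^i\rho\,t^i}
\]
rewrites the assembled generating function so that the coefficient $\phi_{p+1}$ appears, while equivariant Serre duality on $\tilde Y$---under which the top form carries the one-dimensional $G$-character $\det(\rho)$---produces the prefactor $(-1)^{d-1}\det(\rho)$. The main obstacle I anticipate is the sign and index bookkeeping: matching the generating function produced by the adjunction and Koszul arguments to the right-hand side of the theorem requires careful tracking of the characters $\bigwedge^i\rho$, $\det(\rho)$, and shifts in the Ehrhart-type variable. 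Non-equivariantly, this reduces to the classical Danilov--Khovanskii computation of $\chi_y(X^\circ)$; equivariantly, one must track all one-dimensional characters consistently at every step, which makes this the most delicate part of the argument.
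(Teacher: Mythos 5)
Your overall skeleton---pass to a smooth $G$-equivariant toric compactification, reduce the computation to equivariant holomorphic Euler characteristics of twisted sheaves of forms, and feed the resulting generating function into the identity defining $\phi[t]$---is the same as the paper's (the paper works directly with the closure $X'$ of $X^\circ$ and a conormal/residue sequence for log forms, rather than with the complement $T\setminus X^\circ$, but that difference is inessential). The gap is in your pivotal middle step. You work with the ordinary sheaves $\Omega^{p}_{\tilde{Y}}$ and claim that, by ``equivariant Demazure vanishing,'' $\chi^G(\tilde{Y},\Omega^{p}_{\tilde{Y}}\otimes\O(-m\tilde{X}))$ equals $\pm\chi_{mP}$. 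This is false, already for trivial $G$. Demazure vanishing is a statement about nef line bundles, not about twists of sheaves of differential forms; and even after converting by Serre duality to $\chi(\tilde{Y},\Omega^{d-p}_{\tilde{Y}}\otimes\O(m\tilde{X}))$ and invoking Bott--Steenbrink--Danilov type vanishing (which requires ampleness, whereas $\O(m\tilde{X})\cong f^*L^{\otimes m}$ is merely nef on the resolution $\tilde{Y}$), the space of sections of $\Omega^{q}\otimes L$ on a toric variety is \emph{not} given by all lattice points of $mP$: it is indexed by lattice points in the various faces of $mP$ weighted by exterior powers attached to those faces (compare Bott's formula on $\P^n$, where $h^0(\Omega^p(k))\neq h^0(\O(k))$). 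So the generating function you assemble is not $\sum_m\chi_{mP}t^m$, and the appeal to the defining equation of $\phi[t]$ collapses.

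What repairs this---and is precisely the paper's key input---is to use forms with logarithmic poles along the toric boundary $D$. Two things then happen at once. First, log forms are what compute the Hodge filtration of the open varieties in question, via \eqref{e:Hodge}: the identification $F^pH^k/F^{p+1}H^k\cong H^{k-p}(Z,\Omega^p_Z(\log B))$ requires the full simple normal crossings boundary $B$, so the ordinary sheaves $\Omega^p_{\tilde{Y}}$ appearing in your adjunction/Koszul reduction never compute the mixed Hodge structure of $X^\circ$ or of $T\setminus X^\circ$ at all. Second, and decisively, Lemma~\ref{l:logpoles} (Batyrev--Cox) gives a $G$-equivariant trivialization $\Omega^{k}_{\P}(\log D)\cong\bigwedge^k M\otimes_\Z\O_{\P}$, so that together with the vanishing \eqref{e:vanishing} one obtains the clean identity $\chi(\P,\Omega^{k}_{\P}(\log D)\otimes\O_{\P}(mX'))=\chi_{mP}\cdot\bigwedge^{k}\rho$. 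This is the only point of the argument where both $\chi_{mP}$ and the exterior powers $\bigwedge^{k}\rho$ (hence, via Lemma~\ref{l:dual}, the factor $\det(\rho)$) enter; without the log poles there is no such trivialization and no such formula, so the ``sign and index bookkeeping'' you defer to the end cannot get started.
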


In Theorem~\ref{t:simpleint}, we produce an explicit formula for $E_G(X^\circ)$ in the case when $P$ is \define{simple} i.e.
when every vertex of $P$ is contained in precisely $d$ facets. % (see also Corollary~\ref{c:p0}). 
In particular, for $p > 0$, we show that $(-1)^{d - 1} \det(\rho) \cdot e_G^{p,0}(X^{\circ})$ equals the permutation representation induced by the action of $G$ on the lattice points which lie in the relative interior of a $(p + 1)$-dimensional face of $P$ (Corollary~\ref{c:p0}).

The condition that $P$ is simple is equivalent to the requirement that the toric variety $Y$ has at worst quotient singularities. In this case, 
%Equivalently, $P$ is simple if and only if the toric variety $Y$ has at worst quotient singularities, and, in this case, 
$X$ has at worst quotient singularities, and 
%hence $H^k X$ has a pure Hodge structure of weight $k$. 
%In fact, 
computing the representation of $G$ on $H^* X$ reduces to computing the representation of $G$ on the \define{primitive cohomology} 
$H^{d - 1}_{\prim} X = \bigoplus_{p = 0}^{d} H^{p, d - 1 - p}_{\prim}(X)$ (see Section~\ref{s:cohomology}).
  In fact, we have isomorphisms  of $G$-representations $H^{p, d - 1 - p}_{\prim}(X) \cong H^{d - 1 - p,   p}_{\prim}(X)$ (Remark~\ref{r:symmetry}), and hence we may reduce to the case when $p \ge \frac{d - 1}{2}$.  
For any face $Q$ of $P$, let $G_Q$ denote the isotropy subgroup of $Q$. In Section~\ref{s2}, 
we define a representation $\rho_Q: G_Q \rightarrow GL(M^Q)$, where $M^Q$ is a translation of the intersection of the affine span of $Q$ with $M$. 

%\excise{
%As in Section~\ref{s2}, let $M^Q$ be a translate of the intersection of the affine span of $Q$ with $M'$ to the origin, with corresponding representation $\rho_Q: G_Q \rightarrow GL(M^Q)$.  In the case when $G$ is trivial, the theorem below is proved in Section~5.5 of \cite{DKAlgorithm}. 

\begin{ntheorem}[Theorem~\ref{t:simple}]
If $P$ is simple and $p \ge \frac{d - 1}{2}$, then
\[
H^{p, d - 1 - p}_{\prim}(X) = \sum_{[Q] \in P/G} (-1)^{d - \dim Q} 
\Ind_{G_Q}^{G} [ \det(\rho_Q) \cdot \phi_{Q, p + 1} ], 
\]
where $P/G$ denotes the set of $G$-orbits of faces of $P$. 
\end{ntheorem}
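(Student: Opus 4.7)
The plan is to stratify $X$ by the toric orbits of $Y$, compute $E_G(X)$ via the additivity of the equivariant Hodge--Deligne polynomial combined with Theorem~\ref{t:simpleint} applied to each stratum, and then read off the primitive cohomology piece of bidegree $(p, d-1-p)$. Concretely, the toric stratification $Y = \bigsqcup_{Q \preceq P} T_Q$ induces $X = \bigsqcup_{Q \preceq P} X_Q^\circ$, where $X_Q^\circ := X \cap T_Q$ is a non-degenerate, $G_Q$-invariant hypersurface in the torus $T_Q$ of dimension $\dim Q$ with Newton polytope $Q$. Since faces of simple polytopes are simple, Theorem~\ref{t:simpleint} applies to each $X_Q^\circ$, expressing $E_{G_Q}(X_Q^\circ)$ in closed form in terms of $\rho_Q$ and $\phi_Q[t]$. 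The equivariant version of additivity~\eqref{i:1} (for stratifications indexed by $G$-orbits of strata) then gives
\[
E_G(X) = \sum_{[Q] \in P/G} \Ind_{G_Q}^G E_{G_Q}(X_Q^\circ).
\]

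To translate this into cohomology, I would use that simplicity of $P$ forces $Y$, and hence $X$, to have at worst quotient singularities, so by property~\eqref{i:2} the coefficient $[u^p v^q] E_G(X)$ equals $(-1)^{p+q}[H^{p,q}(X)]$. To isolate the primitive piece, recall that $H^*(Y)$ is pure of Hodge type $(k,k)$ (being spanned by torus-invariant cycles), and by the equivariant Lefschetz hyperplane theorem the restriction $H^*(Y) \to H^*(X)$ is an isomorphism of $G$-representations in degrees $< d-1$ and lands in the non-primitive part in degree $d-1$. Consequently, for $p+q = d-1$ with $p \ne q$ we already have $H^{p,q}(X) = H^{p,q}_{\prim}(X)$; in the middle case $2p = d-1$ (occurring only when $d$ is odd), one must additionally subtract the non-primitive contribution $h \cdot H^{p-1,p-1}(Y)$, whose equivariant structure is known combinatorially from the work of Stanley, Procesi, and Dolgachev--Lunts.

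The final step is to substitute the formulas from Theorem~\ref{t:simpleint} into the additivity identity, read off the coefficient of $u^p v^{d-1-p}$, subtract the non-primitive correction when relevant, and verify that the result collapses to $\sum_{[Q]} (-1)^{d - \dim Q} \Ind_{G_Q}^G [\det(\rho_Q) \cdot \phi_{Q, p+1}]$. I expect the main obstacle to lie in this combinatorial simplification: Theorem~\ref{t:simpleint} a priori produces exterior-power terms $\bigwedge^j \rho_Q$ and all coefficients $\phi_{Q, j}$ from each face $Q$, and one must exhibit, using the defining generating function for $\phi_Q[t]$ together with the hypothesis $p \ge (d-1)/2$, the cancellations across faces that reduce the enormous sum to the single surviving term $\det(\rho_Q) \cdot \phi_{Q, p+1}$ from each orbit.
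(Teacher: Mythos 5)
Your argument has a genuine circularity. The per-stratum input you rely on is Theorem~\ref{t:simpleint}, but in this paper Theorem~\ref{t:simpleint} is proved \emph{from} Theorem~\ref{t:simple}: its proof starts from the same orbit decomposition, applies M\"obius inversion over the poset of $g$-fixed faces (Lemma~\ref{l:poset}), and then invokes Theorem~\ref{t:simple} to evaluate $e^{p,q}_{G_Q}(X_Q)$ on the closed strata. So you are assuming a statement that lies downstream of the one you are trying to prove, and your proposal is not a proof unless you supply an independent proof of Theorem~\ref{t:simpleint} --- which would amount to redoing the missing work. (Incidentally, Theorem~\ref{t:simpleint} also only gives the off-diagonal coefficients $e^{p,q}$, $p \neq q$, so it does not determine $E_{G_Q}(X^\circ_Q)$ ``in closed form'' by itself.) The repair is to weaken the per-stratum input to Theorem~\ref{t:xycharacteristic}, the equivariant $\chi_y$-genus formula, which is proved earlier (Section~\ref{s:ehd}) with no simplicity hypothesis. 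This is what the paper does: instead of reading off the individual coefficient of $u^pv^{d-1-p}$ (which is what forces you to need the stronger, circular input), it sums over $q$ and compares two expressions for $\sum_q e^{p,q}_G(X)$: one from the stratification and Theorem~\ref{t:xycharacteristic} applied to each $X \cap T_Q$, the other from purity plus the identification, valid precisely for $p \ge \frac{d-1}{2}$, of the non-primitive contribution with $\sum_{[Q]} \Ind_{G_Q}^G e^{p+1,p+1}_{G_Q}(T_Q) = e_G^{p+1,p+1}(Y)$. The torus terms cancel and the formula falls out; there is no ``enormous sum'' of exterior-power terms to collapse.

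A secondary error: your correction term in the middle case $2p = d-1$ is wrong. The non-primitive part of $H^{d-1}(X)$ is by definition the image of the restriction $H^{d-1}(Y) \to H^{d-1}(X)$, which is injective by the Lefschetz theorem, hence a copy of $H^{p,p}(Y)$ --- not $h \cdot H^{p-1,p-1}(Y)$, which is a $G$-subrepresentation isomorphic to $H^{p-1,p-1}(Y)$ and has strictly smaller dimension for general simple $P$. Relatedly, for the diagonal classes $H^{p,p}(X)$ with $2p > d-1$ (which enter any bookkeeping of $E_G(X)$), Lefschetz in degrees $< d-1$ alone is not enough; you need Poincar\'e duality on $X$ and on $Y$ to identify them with $H^{p+1,p+1}(Y)$, and this identification is exactly what produces the cancellation against the torus-orbit contributions in the paper's proof.
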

%}

\excise{
In particular, we deduce that $H^{d - 1,0}(X) = \det(\rho) \cdot \chi_P^*$, where $\chi_P^*$ denotes the permutation representation corresponding to the action of $G$ on the interior lattice points $\Int(P) \cap M$ of $P$ (Corollary~\ref{c:genus}). In particular, $\dim H^{d - 1,0}(X/G)$ equals the number of $G$-orbits of $\Int(P) \cap M$  whose isotropy subgroup is contained in $\det(\rho)^{-1}(1)$
%$\{ g \in G \mid \det\rho(g) = 1 \}$ 
(Remark~\ref{r:genus}).
}

Let us further assume that $P$ is a \define{simplex} i.e. $P$ has precisely $d + 1$ vertices $\{ v_0, \ldots, v_d \}$.  Let $\Pi$ denote the set of interior lattice points of the 
parallelogram spanned by the vertices %$\{ (v_i, 1 ) \}$ 
of $P \times 1$ in $M \oplus \Z$. 
That is,
\[
\Pi = \{ w \in M \oplus \Z \mid w = \sum_{i} \alpha_i (v_i, 1) \textrm{ for some } 0 < \alpha_i < 1 \}.
\]
Let $u: M \oplus \Z \rightarrow \Z$ denote projection onto the second co-ordinate, and let
$\Pi_k = \{ w \in \Pi \mid u(w) = k \}$, with corresponding permutation representation $\chi_{\lef \Pi_k \rig}$.

\begin{ncor}[Corollary~\ref{c:simplex}]
If $P$ is a simplex, then for any $p \ge 0$,
\[
H^{p, d - 1 - p}_{\prim} (X) = \det(\rho) \cdot \chi_{\lef \Pi_{p + 1} \rig}.
\]
%$\det(\rho) \cdot H^{p, d - 1 - p}_{\prim} (X)$ is isomorphic to the permutation representation induced by the action of $G$ on $\{ v \in \Pi \mid u(v) = p + 1 \}$. 
%In particular, 
%$H^{d - 1}_{\prim} (X) = \det(\rho) \cdot \chi_{\lef \Pi(P) \rig}$.
\end{ncor}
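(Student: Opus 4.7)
The plan is to specialize Theorem~\ref{t:simple} to the simplex case and collapse the resulting alternating sum over faces to a single term.

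First, I would establish that $\phi_Q[t]$ has a transparent form when $Q \le P$ is a simplex. Decomposing the cone over $Q$ into translates of the half-open parallelepiped $\Box_Q := \{\sum_{v_j \in Q} \alpha_j(v_j, 1) : 0 \le \alpha_j < 1\} \cap (M \oplus \Z)$ yields
\[
1 + \sum_{m \ge 1} \chi_{mQ}\, t^m = \frac{\sum_k \chi_{\lef \Box_{Q, k} \rig}\, t^k}{\det(1 - V_Q\, t)},
\]
where $V_Q := \C\{(v_j, 1) : v_j \in Q\}$ carries the $G_Q$-permutation representation on the vertices of $Q$. The augmentation $V_Q \to \C$ splits $G_Q$-equivariantly with kernel $\C$-isomorphic to $M^Q$, so $\det(1 - V_Q\, t) = (1 - t)\det(1 - \rho_Q\, t)$, and the defining equation of $\phi_Q[t]$ gives $\phi_Q[t] = \sum_k \chi_{\lef \Box_{Q, k} \rig}\, t^k$. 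Setting $\Pi_F := \{\sum_{v_j \in F} \alpha_j(v_j, 1) : 0 < \alpha_j < 1\} \cap (M \oplus \Z)$ for each face $F \le P$ (so $\Pi_P = \Pi$), the $G_Q$-equivariant partition $\Box_Q = \bigsqcup_{F \le Q} \Pi_F$ yields $\phi_{Q, p+1} = \sum_{F \le Q} \chi_{\lef \Pi_{F, p+1} \rig}$.

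Next, assuming $p \ge \frac{d-1}{2}$, I would substitute into Theorem~\ref{t:simple} and reorganize the double sum by first fixing $F \le P$ and then summing over $Q \supseteq F$ modulo $G_F$. Using the $G_{F, S}$-equivariant exact sequence $0 \to M^F \to M^{F \cup S} \to M^{F \cup S}/M^F \to 0$ together with the identity $\det(M^{F \cup S}/M^F) = \sgn(S)$ (which holds in the simplex case as a character of $G_{F, S}$), one obtains $\det(\rho_{F \cup S})|_{G_{F, S}} = \det(\rho_F) \cdot \sgn(S)$. Frobenius reciprocity then pulls $\chi_{\lef \Pi_{F, p+1} \rig} \cdot \det(\rho_F)$ outside the induction, so the contribution of (the $G$-orbit of) $F$ becomes $\Ind_{G_F}^G[\chi_{\lef \Pi_{F, p+1} \rig} \cdot \det(\rho_F) \cdot C_F]$, where
\[
C_F = (-1)^{d - \dim F} \sum_{[S]} (-1)^{|S|}\, \Ind_{G_{F, S}}^{G_F}[\sgn(S)]
\]
runs over $G_F$-orbits of subsets $S \subseteq \text{vert}(P) \smallsetminus \text{vert}(F)$. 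Recognizing $\sum_{[S]} (-1)^{|S|} \Ind_{G_{F, S}}^{G_F}[\sgn(S)]$ as the class $\sum_k (-1)^k \wedge^k W$ in $R(G_F)$ with $W := \C(\text{vert}(P) \smallsetminus \text{vert}(F))$, its character at $g \in G_F$ equals $\det(I - g|_W)$. When $W \ne 0$, every cycle of $g$ on $\text{vert}(P) \smallsetminus \text{vert}(F)$ contributes a factor $(1 - 1) = 0$ to this determinant, so $C_F = 0$ whenever $F < P$; only $F = P$ survives, with $C_P = 1$, yielding $\det(\rho) \cdot \chi_{\lef \Pi_{p+1} \rig}$.

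Finally, for $p < \frac{d-1}{2}$, I would invoke the Hodge symmetry $H^{p, d-1-p}_{\prim}(X) \cong H^{d-1-p, p}_{\prim}(X)$ of Remark~\ref{r:symmetry} applied at $p' = d - 1 - p \ge \frac{d-1}{2}$, combined with the $G$-equivariant involution $w \mapsto \sum_{i=0}^d (v_i, 1) - w$ on $\Pi$ that bijects $\Pi_{p+1}$ with $\Pi_{d - p}$. The main obstacle is the Koszul-type vanishing $C_F = 0$ for $F < P$: this is where the simplex structure enters essentially, since the complementary vertex set $\text{vert}(P) \smallsetminus \text{vert}(F)$ supports a nonzero permutation representation $W$ with $\sum_k (-1)^k \wedge^k W = 0$ in $R(G_F)$.
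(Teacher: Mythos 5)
Your proposal is correct and follows essentially the same route as the paper: reduce to $p \ge \frac{d-1}{2}$ via the involution on $\Pi$ together with Remark~\ref{r:symmetry}, substitute the box-point description of $\phi_{Q,p+1}$ (your half-open parallelepiped computation is exactly Proposition~\ref{p:simplex}, which the paper cites from its companion work) into Theorem~\ref{t:simple}, and collapse the resulting double sum over faces to the single term $F = P$. The only difference is packaging: where you prove the vanishing $C_F = 0$ for $F < P$ by identifying the coefficient with $\sum_k (-1)^k \wedge^k W$ and using $\det(I - g|_W) = 0$ for a permutation action on a nonempty set, the paper invokes the M\"obius-function Lemma~\ref{l:poset}, whose own proof is the same inclusion--exclusion over $g$-orbits of the complementary vertices.
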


In particular,  %if we 
let $H^{d - 1}_{\prim}(X/G)
= \bigoplus_{p} H^{p, d - 1 - p}_{\prim} (X/G)$ 
denote the subspace %of $H^{d - 1}(X/G)$  
 corresponding to  
$H^{d - 1}_{\prim} (X)^G$ under the 
isomorphism $H^*(X/G) \cong H^* (X)^G$.
%then we deduce that 
%Then 
The above corollary implies that
$\dim H^{p, d - 1 - p}_{\prim} (X/G)$ equals the number of $G$-orbits of $\Pi_{p + 1}$ whose isotropy subgroup is contained in $\det(\rho)^{-1}(1)$, and we deduce the Hodge structure of $X/G$ (Remark~\ref{r:quotient}). 

As a concrete example, consider the action of $\Sym_{d + 1}$ on the Fermat hypersurface $X_m = \{ x_0^m + \cdots + x_d^m = 0 \} \subseteq \P^d$ of degree $m$ by permuting co-ordinates (Example~\ref{e:Fermat2}). If $\sgn$ denotes the sign representation of $\Sym_{d + 1}$, then we deduce that 
 $\sgn \cdot H^{p, d - 1 - p}_{\prim}(X_m)$ is isomorphic to 
the permutation representation of $\Sym_{d + 1}$ on the set 
\begin{equation*}%\label{e:set}
\{ (a_0, \ldots, a_d) \in \Z^{d + 1} \mid 0 < a_i < m, \sum_{i = 0}^d a_i = (p + 1)m \}. 
\end{equation*}
An inexplicit formula for the characters of these representations 
%as particular coefficients of certain power
%in terms of certain coefficients of 
%products of explicit power series 
can be deduced from general results of Ch\^enevert on actions of groups on smooth hypersurfaces in projective space \cite[Theorem~2.2]{CheRepresentations}. On the other hand, we deduce
that  if 
$g$ in $\Sym_{d + 1}$ has cycle type $( \lambda_1, \ldots, \lambda_r )$, then
$\tr(g; H^{p, d - 1 - p}_{\prim}(X_m))$ is equal to 
\[
(-1)^{d + 1 - r} 
\# \{ (a_1, \ldots, a_r) \in \Z^{r} \mid 0 < a_i < m, \sum_{i = 0}^d \lambda_i a_i = (p + 1)m \}.
\]

It follows that $\sgn \cdot H^{d - 1}_{\prim}(X_m)$  is isomorphic to the permutation representation of $\Sym_{d + 1}$ on the set 
\begin{equation*}%\label{e:hard}
\{ (a_0, \ldots, a_d) \in (\Z/m\Z)^{d + 1} \mid a_i \ne 0, \sum_{i = 0}^d a_i = 0 \}. 
\end{equation*}
By standard comparison theorems (see, for example, Section~1 in \cite{KLEquivariant}), this agrees with the representation of $\Sym_{d + 1}$ on the primitive $l$-adic cohomology of $X_m$. In this case, 
a highly non-trivial proof of the latter result is due to Br\"unjes, who uses it to describe the zeta functions of all `twisted Fermat equations'  \cite[Corollary~11.3]{BruForms}.
% \cite[Corollary~11.3]{BruForms}. In fact, 
%Br\"unjes uses this result to deduce the zeta functions of all `twisted Fermats'. 
%A formula for the corresponding character is given by Ch\^enevert \cite[Theorem~2.5]{CheRepresentations}.

Lastly, in Section~\ref{s:mirror}, we conjecture an equivariant version of Batyrev and Borisov's mirror symmetry between 
pairs of Calabi-Yau hypersurfaces in dual Fano toric varieties \cite{BBMirror}. 
If $P$ and $P^*$ are polar, $G$-invariant, \define{reflexive} polytopes, and $X$ and $X^*$ are corresponding $G$-invariant, non-degenerate hypersurfaces, then we introduce 
\define{equivariant stringy invariants}
$E_{\st,G}(X; u,v)$ and $E_{\st, G}(X^*; u,v)$, which satisfy the property that if $\tilde{X} \rightarrow X$ is a $G$-invariant, crepant resolution, then $E_{\st,G}(X) = E_{G}(\tilde{X})$.

\begin{nconj}[Conjecture~\ref{c:mirror}]
%Suppose that $G$ acts linearly on a lattice $M$ of rank $d$ via a homomorphism $\rho: G \rightarrow GL(M)$. 
%If $P$ and $P^*$ are polar, $G$-invariant, reflexive polytopes, and $X$ and $X^*$ are corresponding $G$-invariant, non-degenerate hypersurfaces, then 
The equivariant stringy invariants
$E_{\st,G}(X; u,v)$ and $E_{\st, G}(X^*; u,v)$ %in $R(G)[u,v]$
 are rational functions in $u$ and $v$ satisfying 
\[
E_{\st, G}(X; u,v) = (-u)^{d - 1} \det(\rho) \cdot E_{\st, G}(X^*; u^{-1},v). 
\]
%Moreover, if $X$ admits a $G$-equivariant crepant resolution $\tilde{X} \rightarrow X$, then $E_{\st, G}(X)$ equals the equivariant Hodge-Deligne polynomial $E_G(\tilde{X})$. 
\end{nconj}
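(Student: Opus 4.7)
The plan is to reduce the conjecture to a combinatorial identity between the equivariant $\phi$-polynomials attached to dual faces of the reflexive pair $(P, P^*)$, in analogy with Batyrev--Borisov's original proof for the non-equivariant stringy $E$-function. First I would give an explicit formula for $E_{\st, G}(X;u,v)$ as a sum over $G$-orbits of faces of $P$. Since $X$ admits a decomposition by torus-orbit pieces $X \cap T_Q$ (which are non-degenerate hypersurfaces in lower-dimensional tori), one can apply the additivity property (\ref{i:1}) together with Theorem~\ref{t:xycharacteristic} (or the refined Theorem~\ref{t:simple} for simple faces) to each stratum, and then feed the resulting $E_{G_Q}(X^\circ \cap T_Q)$ into the inclusion--exclusion definition of the equivariant stringy invariant. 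The result should take the form
\[
E_{\st, G}(X;u,v) = \sum_{[Q] \in P/G} \Ind_{G_Q}^G \bigl[ \det(\rho_Q) \cdot \Phi_Q(u,v) \bigr],
\]
where $\Phi_Q(u,v)$ is a rational function built explicitly from the polynomials $\phi_Q[t]$ of Corollary~\ref{c:vanishing} (applied to the face $Q$) and their analogues for the normal cone, together with combinatorial factors coming from the $(u,v)$-structure of the stringy correction.

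Next, I would apply polar duality. A reflexive pair $(P, P^*)$ comes equipped with a dimension-reversing bijection between nontrivial faces $Q$ of $P$ and nontrivial faces $Q^*$ of $P^*$, and because the $G$-action is compatible with polarity, this descends to a bijection $[Q] \leftrightarrow [Q^*]$ of $G$-orbits with $G_Q = G_{Q^*}$. The conjectured identity
\[
E_{\st, G}(X; u,v) = (-u)^{d - 1} \det(\rho) \cdot E_{\st, G}(X^*; u^{-1},v)
\]
will then follow from a termwise comparison under this bijection, provided one can prove, for each dual pair of faces, an equivariant reciprocity of the form
\[
\Phi_Q(u,v) \;=\; (-u)^{\dim Q} \det(\rho_Q)^{-1} \det(\rho) \cdot \Phi_{Q^*}(u^{-1}, v)
\]
(up to the correct normalization). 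This is an equivariant enhancement of the Ehrhart-style reciprocity for the $h^*$-polynomial of a reflexive polytope, and should ultimately be a shadow of an equivariant Serre duality / Poincar\'e duality between the cones over $Q$ and $Q^*$.

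The two established cases illustrate this strategy. When $X$ is smooth (i.e.\ $P$ is a simplex of index $1$, or more generally when both hypersurfaces are smooth), only vertex-type and ambient contributions survive, the $\Phi_Q$ collapse to the $\phi_k$ computed in Theorem~\ref{t:xycharacteristic}, and one can check the mirror identity directly using the Fermat-type enumeration in Corollary~\ref{c:simplex} together with Poincar\'e duality on the primitive cohomology. In dimension $d-1 \le 3$, only faces of dimension $\le 3$ can contribute nontrivially to $\Phi_Q$, so the required equivariant reciprocity reduces to a finite list of face shapes that can be verified by hand.

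The main obstacle is the equivariant reciprocity for $\Phi_Q$ versus $\Phi_{Q^*}$: in the non-equivariant case this is essentially Stanley's reciprocity for rational generating functions of lattice cones, but in the present setting one needs a $G_Q$-equivariant refinement that interacts correctly with $\det(\rho_Q)$, and the power series $\phi_Q[t]$ need not even be polynomial in general (cf.\ \cite[Section~7]{YoEquivariant}). The assumption that $X$ is a $G$-invariant non-degenerate hypersurface of a reflexive polytope is what forces polynomiality (Corollary~\ref{c:vanishing}), and obtaining the sharper reciprocity beyond the smooth and low-dimensional cases is precisely where the conjecture remains open.
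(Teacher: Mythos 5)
Your proposal is not a proof, and you concede as much in your final paragraph: the termwise reciprocity between $\Phi_Q$ and $\Phi_{Q^*}$ is exactly the open content of Conjecture~\ref{c:mirror}, so the argument has a genuine gap at its central step. In fact the gap occurs earlier than where you place it. Even your starting point, the formula $E_{\st, G}(X;u,v) = \sum_{[Q] \in P/G} \Ind_{G_Q}^G [\det(\rho_Q) \cdot \Phi_Q(u,v)]$, is not available: the paper defines $E_{\st,G}$ as an equivariant motivic integral against the relative canonical divisor of an equivariant resolution, and the remark immediately following the conjecture identifies the concrete obstruction to the Batyrev--Borisov/Batyrev--Dais route you want to imitate --- a $G$-equivariant, crepant, toric partial resolution with orbifold singularities need not exist, so one cannot compute $E_{\st,G}$ from orbifold cohomology of such a resolution, and no equivariant analogue of the combinatorial face-poset formula for the stringy invariant is established anywhere. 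Producing that formula, with the correct $\det(\rho_Q)$ twists and induction structure, would itself be a substantial new theorem, not a routine application of additivity and Theorem~\ref{t:xycharacteristic}.

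Your description of the two cases that \emph{are} proved also does not match the paper, and contains an error: smoothness of $X$ does not mean $P$ is ``a simplex of index $1$'' --- smooth reflexive polytopes are generally not simplices. What the proof of Corollary~\ref{c:smoothmirror} actually uses is that every \emph{proper face} of a smooth reflexive polytope is a unimodular simplex, so $\phi_Q[t] = 1$ for all proper faces $Q$; then Theorem~\ref{t:simple} gives the primitive cohomology in terms of $\det(\rho) \cdot \phi_{P,\bullet}$, the Lefschetz theorem together with Proposition~\ref{p:reflexive} gives the non-primitive cohomology in terms of $\phi_{P^*,\bullet}$, Corollary~\ref{c:reflexive} supplies the reflexivity symmetry $\phi[t] = t^d \phi[t^{-1}]$, and the mirror identity follows simply by exchanging the roles of $P$ and $P^*$ (Corollary~\ref{c:simplex} plays no role). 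Similarly, the case $\dim X \le 3$ is not ``a finite list of face shapes verified by hand'': the paper computes, for the crepant resolutions, the boundary of the Hodge diamond (Example~\ref{e:boundary}, via Corollary~\ref{c:zerosmooth}), the non-primitive part of $H^{1,1}$ (Corollary~\ref{c:compute2}) and the primitive part of $H^{d-2,1}$ (Corollary~\ref{c:real}); the resulting expressions in $\theta(P)$, $\theta(P^*)$, $\chi_P$, $\chi_{P^*}$ are visibly exchanged by polarity up to the $\det(\rho)$ twist, and conjugation symmetry plus Poincar\'e duality fill in the remaining entries. So the honest summary of your plan is that it reduces the conjecture to two unproven ingredients --- an equivariant face-orbit formula for $E_{\st,G}$ and the equivariant reciprocity --- neither of which is supplied by you or by the paper, whereas the cases the paper does settle are obtained by direct Hodge-theoretic computation rather than by your proposed termwise duality.
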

If there exist $G$-equivariant, crepant resolutions
 $\tilde{X} \rightarrow X$ and $\tilde{X}^* \rightarrow X^*$, then the conjecture says that 
 \[
H^{p,q}(\tilde{X}) = \det(\rho) \cdot H^{d - 1 - p,q}(\tilde{X}^*) \in R(G) \: \textrm{  for  } \: 0 \le p,q \le d - 1. 
\]
This would have the surprising consequence that if 
$H = \det(\rho)^{-1}(1)$, then 
%$G$ acts via a 
%representation $\rho: G \rightarrow SL(M)$, then 
the (possibly singular) Calabi-Yau varieties  
$\tilde{X}/H$ and $\tilde{X}^*/H$ have mirror Hodge diamonds (Remark~\ref{r:surprise}). 

%This would have the surprising consequence that if $G$ acts via a 
%representation $\rho: G \rightarrow SL(M)$, then the singular Calabi-Yau varieties  
%$\tilde{X}/G$ and $\tilde{X}^*/G$ have mirror Hodge diamonds (Remark~\ref{r:surprise}). 

%We establish the conjecture in the following cases. 

\begin{ncor}[Corollary~\ref{c:smoothmirror}, Corollary~\ref{c:dim3}]  
The conjecture holds in the following cases 
\begin{itemize}
\item if $X$ and $X^*$ are smooth, 

\item if $X$ and $X^*$ admit $G$-equivariant, crepant, toric resolutions and $\dim X \le 3$. 

\end{itemize}
\end{ncor}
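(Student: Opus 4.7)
The plan is to verify the conjectured identity case by case, by reducing it via additivity and the explicit formula of Theorem~\ref{t:simple} to a combinatorial duality between contributions indexed by polar dual pairs of faces of $P$ and $P^*$.

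For the smooth case, when $X$ is smooth one has $E_{\st,G}(X;u,v) = E_G(X;u,v)$, and smoothness of $X$ forces $P$ to be simple along the strata meeting $X$. Decompose $X$ as the disjoint union of its intersections $X \cap T_Q$ with the torus orbits of $Y_P$, and apply additivity:
\[
E_G(X;u,v) = \sum_{[Q] \in P/G} \Ind_{G_Q}^G E_{G_Q}(X \cap T_Q; u,v).
\]
Each summand is computed by Theorem~\ref{t:xycharacteristic} in terms of the equivariant polynomials $\phi_Q[t]$ and exterior powers of $\rho_Q$. The same computation applied to $X^*$ yields an analogous sum indexed by faces $Q^*$ of $P^*$. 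Polar duality of reflexive polytopes provides a $G$-equivariant bijection between faces of $P$ and faces of $P^*$ satisfying $\dim Q + \dim Q^* = d - 1$ and $G_Q = G_{Q^*}$, so the two sums are indexed by the same set.

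The core step is then an equivariant combinatorial identity relating the contribution of $Q$ to $X$ with that of $Q^*$ to $X^*$, schematically of the form
\[
\det(\rho_Q) \, \phi_Q[t] \cdot t^{\dim Q + 1} \;=\; \det(\rho_{Q^*}) \, \phi_{Q^*}[1/t],
\]
together with the matching of the $(-u)^{d-1}\det(\rho)$ factors via $\det(\rho) = \det(\rho_Q) \otimes \det(\rho_{Q^*})$ on $M_Q \oplus M_{Q^*}$. This is the equivariant refinement of the classical palindromicity $h^*_i(P) = h^*_{d-i}(P)$ of a reflexive polytope combined with Batyrev--Borisov reflexive duality. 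I would approach it character-by-character: evaluating at $g \in G$ reduces the statement to an identity between Ehrhart-type series of the fixed polytopes $Q^g$ and $(Q^*)^g$ in their respective fixed sublattices, where ordinary reflexive-polytope palindromicity applies. Establishing this identity in the required equivariant form is the main obstacle, since the fixed polytopes $Q^g$ and $(Q^*)^g$ need not literally be polar reflexive pairs, and one must use a Frobenius-style reciprocity to bridge the two sides.

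For the second case, $\dim X \le 3$ with $G$-equivariant crepant toric resolutions $\tilde X \to X$ and $\tilde X^* \to X^*$, one again uses additivity: $\tilde X$ is stratified into torus-orbit strata of the refined toric variety, and each such stratum is a non-degenerate hypersurface in a torus, handled by Theorem~\ref{t:xycharacteristic}. The new strata beyond those of $X$ come from the subdivisions used to resolve $Y_P$. Because $\dim X \le 3$, these subdivisions are combinatorially simple, and under Batyrev--Borisov duality of Gorenstein cones each exceptional stratum of $\tilde X$ matches a dual exceptional stratum of $\tilde X^*$, with matching isotropy subgroup and twisted-duality character. The conjectured identity then follows from the smooth-case identity applied to every lower-dimensional $G_Q$-equivariant reflexive slice arising in the resolution; the principal labor is bookkeeping, i.e.\ verifying that the $G$-equivariant matching of exceptional strata is compatible with the duality factor $(-u)^{d-1}\det(\rho)$ stratum-by-stratum, which in low dimension can be done explicitly.
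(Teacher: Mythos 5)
Both halves of your proposal hinge on a face-by-face (respectively stratum-by-stratum) matching of contributions under polar duality, and no such matching exists; this is a genuine gap, not bookkeeping. Your ``core identity'' $\det(\rho_Q)\,\phi_Q[t]\,t^{\dim Q+1}=\det(\rho_{Q^*})\,\phi_{Q^*}[1/t]$ is false already in the non-equivariant case: take $Q$ a vertex of $P$, so $\phi_Q[t]=1$ and your left side is essentially $t$, while $Q^*$ is a facet of $P^*$ whose $h^*$-polynomial is in general nontrivial. Concretely, for the reflexive triangle $P=\conv\{(1,0),(0,1),(-1,-1)\}$ with trivial $G$, the facet of $P^*$ dual to a vertex of $P$ is a segment of lattice length $3$, so $\phi_{Q^*}[t]=1+2t$, and $t\neq 1+2t^{-1}$. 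There is no local duality between the Ehrhart data of a face and of its polar dual face; the non-equivariant identity \eqref{e:bb} is a global theorem of Batyrev--Borisov whose proof requires the Gorenstein-cone machinery, not a termwise identification. The same objection defeats your second case: the exceptional strata of $\tilde{X}$ come from cones on the lattice points of $\partial P^*$, those of $\tilde{X}^*$ from cones on the lattice points of $\partial P$, and these sets can have wildly different cardinalities, so no $G$-equivariant contribution-matching bijection can exist. The paper's own running example makes this stark: for the quintic in $\P^4$, $\tilde{X}=X$ has no exceptional strata at all, while $\tilde{X}^*$ has very many (they account for $h^{1,1}(\tilde{X}^*)=101$).

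The paper's proofs avoid any such matching. For Corollary~\ref{c:smoothmirror}: smoothness of $P$ forces every proper face $Q$ to be a unimodular simplex, so $\phi_Q[t]=1$ and Theorem~\ref{t:simple} collapses to the single term $H^{p,d-1-p}_{\prim}(X)=\det(\rho)\cdot\phi_{P,p+1}$ (for $p\ge\tfrac{d-1}{2}$); meanwhile the non-primitive cohomology of $X$ is identified, via Lefschetz and Poincar\'e duality, with $H^*(Y)$, and $H^{2p}(Y)=\phi_{P^*,p}$ by Proposition~\ref{p:reflexive} because the normal fan of $P$ is the fan over the faces of the smooth reflexive polytope $P^*$ (Corollary~\ref{c:reflexive} supplies the needed palindromicity). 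Thus the primitive part of $X$ is built from $\phi_P$ and the non-primitive part from $\phi_{P^*}$; running the same computation for $X^*$ swaps the two up to the $\det(\rho)$ twist, and that exchange \emph{is} the mirror statement --- no comparison of dual faces ever occurs. For Corollary~\ref{c:dim3}, the paper verifies the conjecture entry-by-entry on the Hodge diamonds of the crepant resolutions: the boundary $H^{p,0}$ by Corollary~\ref{c:zerosmooth}, $H^{1,1}(\tilde{X})$ by Corollary~\ref{c:compute2}, and $H^{d-2,1}(\tilde{X})$ by Corollary~\ref{c:real}; the two resulting formulas, $\theta(P^*)+\chi_{P^*}-\chi_{\lef\Phi(P^*)_{d-1}\rig}-\rho-1$ and $\det(\rho)\cdot[\,\theta(P)+\chi_{P}-\chi_{\lef\Phi(P)_{d-1}\rig}-\rho-1\,]$, are visibly exchanged under $P\leftrightarrow P^*$ up to $\det(\rho)$, and in dimension at most $3$ these entries, together with the boundary and the symmetries of the diamond, determine everything. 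Repairing your route would amount to proving an equivariant Batyrev--Borisov identity in full generality, i.e.\ Conjecture~\ref{c:mirror} itself, which is exactly what these two special cases are designed to circumvent.
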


We finish with an explicit example of equivariant mirror symmetry. 
Consider the action of $\Sym_{5}$ on the quintic $3$-fold $X = \{ x_0^5 + \cdots + x_d^5 = 0 \} \subseteq \P^4$ by permuting co-ordinates. %We first recall the construct of the mirror of $X$. 
Let $H$ be the quotient of the finite group $\{ (\alpha_0, \ldots, \alpha_4) \in  (\Z/5\Z)^{5} \mid \sum_{i = 0}^4 \alpha_i = 0 \}$ by the diagonally embedding subgroup $\Z/5\Z$. Then $(\alpha_0, \ldots, \alpha_4) \in H$ acts on 
$\P^4$ by multiplying co-ordinates by 
$(e^{\frac{2 \pi i \alpha_0}{5}}, \ldots, e^{\frac{2 \pi i \alpha_4}{5}})$. 
 The hypersurface
$Z_\psi = \{ x_0^{5} + %\cdots 
 x_1^{5} +  x_2^{5} +  x_3^{5}  + x_4^{5} = \psi x_0x_1x_2x_3x_4 \} \subseteq \P^4$ is $H$-invariant and $\Sym_5$-invariant. If we set 
 $X^* = Z_\psi /H$ for a general choice of $\psi$, then $X^*$ inherits a $\Sym_5$-action, and $X^*$ may be regarded as a mirror to $X$. Moreover, there exists a $\Sym_5$-equivariant, crepant, toric resolution 
$\tilde{X}^* \rightarrow X^*$. Using the explicit calculations for Fermat hypersurfaces above, together with the above corollary, we deduce that if 
$\mu$ is the $101$-dimensional representation $1 + 2 \Ind_{\Sym_3}^{\Sym_5} (1)  + 2 \Ind_{\Sym_2 \times \Sym_2}^{\Sym_5}(1)$, then the representations of $\Sym_5$ on the cohomology of $X$ and $\tilde{X}^*$ are described in the Figure~\ref{reflexive}. 

\begin{figure}[htb]
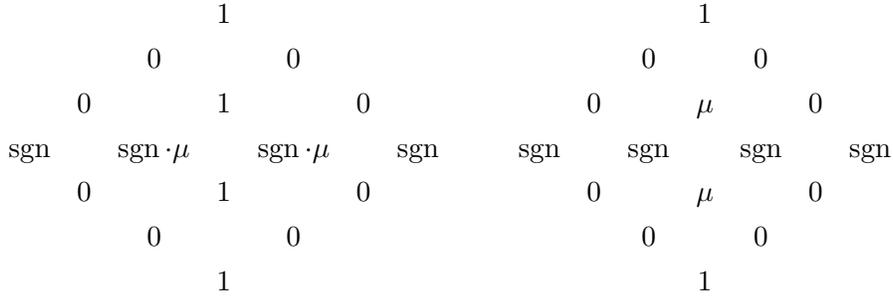


\begin{tabular}{ c    c     c   c  c  c  c                 c c  c                               c    c     c   c  c  c  c         }
%\hline 
%\multicolumn{3}{|c|}{Inequalities from Theorem~\ref{superA} for $d \le 14$
%} 
%\\
 &  &  & $1$ & &                                             & &&      &        &  &  & $1$ & &   \\
  &  & $0$ & & $0$ &  &                                & &&           &  & $0$ & & $0$ &  &\\
   &  $0$  &  & $1$ & & $0$ &                    &  &&         &  $0$  &  & $\mu$ & & $0$ &\\
     $\sgn$  &   & $\sgn \cdot \mu$  &  & $\sgn \cdot \mu$ &  & $\sgn$      & &&        $\sgn$  &   & $\sgn$  &  & $\sgn$ &  & $\sgn$ \\
      &  $0$  &  & $1$ & & $0$ &              &&   &       &  $0$  &  & $\mu$ & & $0$ &  \\
        &  & $0$ & & $0$ &  &                         &&   &          &  & $0$ & & $0$ &  &\\
        &  &  & $1$ & &                               &          &   &&            &  &  & $1$ & & \\
\end{tabular}

 \caption{Equivariant Hodge diamonds for the quintic $3$-fold $X$ and an equivariant crepant resolution $\tilde{X}^*$ of its mirror.    }

        \label{reflexive}
        \end{figure}

If we restrict to the action of the subgroup $A_5 = \sgn^{-1}(1)$ of  $\Sym_5$ consisting of all even permutations, then we deduce that the Calabi-Yau varieties $X/A_5$ and $\tilde{X}^*/A_5$ have mirror Hodge diamonds, computed in Figure~\ref{reflexive2} below. 

\begin{figure}[htb]
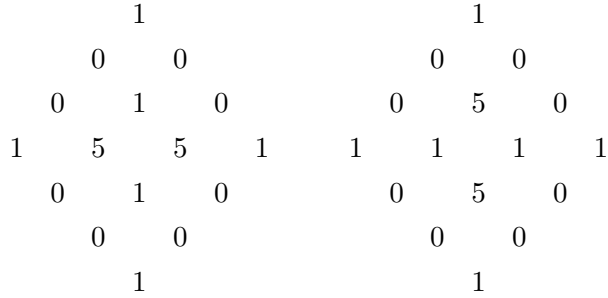


\begin{tabular}{ c    c     c   c  c  c  c                 c c  c                               c    c     c   c  c  c  c         }
%\hline 
%\multicolumn{3}{|c|}{Inequalities from Theorem~\ref{superA} for $d \le 14$
%} 
%\\
 &  &  & $1$ & &                                             & &&      &        &  &  & $1$ & &   \\
  &  & $0$ & & $0$ &  &                                & &&           &  & $0$ & & $0$ &  &\\
   &  $0$  &  & $1$ & & $0$ &                    &  &&         &  $0$  &  & $5$ & & $0$ &\\
     $1$  &   & $5$  &  & $5$ &  & $1$      & &&        $1$  &   & $1$  &  & $1$ &  & $1$ \\
      &  $0$  &  & $1$ & & $0$ &              &&   &       &  $0$  &  & $5$ & & $0$ &  \\
        &  & $0$ & & $0$ &  &                         &&   &          &  & $0$ & & $0$ &  &\\
        &  &  & $1$ & &                               &          &   &&            &  &  & $1$ & & \\
\end{tabular}

 \caption{Hodge diamonds for $X/A_5$ and $\tilde{X}^*/A_5$.}
        \label{reflexive2}
        \end{figure}

% together with an action of $G$ on $Y$ via toric morphisms.  

%%%%%%%%%%%%%%%%%%%%%%%%%%%%%%%%%%%%%%%%%%%%%%%

%Finally, we mention that our method of proof does not involve the Lefschetz-Riemann-Roch theorem, 
%which is the standard technique used to compute such representations  (see, for example, \cite{CheRepresentations, DonLefschetz, NakNote}). On the other hand, 
%we claim no originality when $G$ is trivial. In this case, our technique reduces to a slight variant of Danilov and Khovanskii's work in \cite{DKAlgorithm}, and our results are known. 

%then most of our results are due to them.  
%We provide references for other results throughout the paper. 

We end the introduction with a brief outline of the contents of the paper.  In Section~\ref{s2} we 
%reduce to the case when $\dim P + 1 = \dim M'_\R$, and 
provide the setup for the rest of the paper. In Section~\ref{s:ee} we recall some results about equivariant Ehrhart theory proved in \cite{YoEquivariant}. 
In Section~\ref{toric} we recall some basic facts about toric geometry and non-degenerate hypersurfaces. In Section~\ref{s:HodgeDeligne} we introduce equivariant Hodge-Deligne polynomials and provide some basic properties and examples. In Section~\ref{s:ehd} we prove our algorithm for computing the equivariant Hodge-Deligne polynomial of a non-degenerate hypersurface in a torus, and give several consequences. In Section~\ref{s:cohomology} and Section~\ref{s:simplex} we restrict to the cases when $P$ is simple and $P$ is a simplex respectively. In Section~\ref{s:mirror} we prove our results on equivariant mirror symmetry. 
We claim no originality when $G$ is trivial. In this case, our technique reduces to a slight variant of Danilov and Khovanskii's work in \cite{DKAlgorithm}, and our results are known.

\medskip

\noindent
{\it Notation and conventions.}  
All representations and cohomology groups will be defined over $\C$, unless otherwise stated.
All representations are finite-dimensional. 
 We often identify a representation $\chi$ with its associated character and write $\chi(g)$ for the evaluation of the character of $\chi$ at $g \in G$. We consider  representations of $G$ in the representation ring $R(G)$, and write $\chi + \phi$ (respectively $\chi \cdot \phi$) for the direct sum 
(respectively tensor product) of two representations $\chi$ and $\phi$.  We write $1 \in R(G)$ to denote the trivial representation. 
If $G$ acts on a set $S$, then we write $\chi_{\lef S \rig}$ for the corresponding permutation representation. 
%If $V$ is a $\Z$-module, then we write $V_\R = V \otimes_\Z \R$ and $V_\C = V \otimes_\Z \C$. 

\medskip
\noindent
{\it Acknowledgements.}  
The author would like to thank Kalle Karu, Gus Lehrer, John Stembridge and Jonathan Wise
for several useful discussions. The author benefited greatly from attending Denis Auroux's inspiring course on mirror symmetry at UC Berkeley in Fall 2009.  
%, Gus Lehrer and
%John Stembridge 
% Geordie Williamson and Josephine Yu for helpful comments and enlightening discussions. In particular, Remark~\ref{r:Dave} is due to Dave Anderson, and Remark~\ref{r:Karu} is due to Kalle Karu. 

%%%%%%%%%%%%%%%%%%%%%%%%%%%%%%%%%%%%%%%%%%%%%%%
%%%%%%%%%%%%%%%%%%%%%%%%%%%%%%%%%%%%%%%%%%%%%%%
%%%%%%%%%%%%%%%%%%%%%%%%%%%%%%%%%%%%%%%%%%%%%%%
%%%%%%%%%%%%%%%%%%%%%%%%%%%%%%%%%%%%%%%%%%%%%%%
\section{The setup}\label{s2} 
%%%%%%%%%%%%%%%%%%%%%%%%%%%%%%%%%%%%%%%%%%%%%%%
%%%%%%%%%%%%%%%%%%%%%%%%%%%%%%%%%%%%%%%%%%%%%%%
%%%%%%%%%%%%%%%%%%%%%%%%%%%%%%%%%%%%%%%%%%%%%%%
%%%%%%%%%%%%%%%%%%%%%%%%%%%%%%%%%%%%%%%%%%%%%%%

In this section, we introduce and justify the setup we will use throughout the paper.

Let $G$ be a finite group acting linearly on a lattice $M' \cong \Z^n$, and let $P$ be a $d$-dimensional $G$-invariant lattice polytope.
%In this section, we explain how one can always reduce to the case when %$n = d + 1$, 
%$M' = M \oplus \Z$ for some lattice $M$ of rank $d$ and $P \subseteq M \times 1$. 
%We also show that one may equivalently consider $d$-dimensional lattice polytopes in lattices of rank $d$, which are $G$-invariant `up to translation'. The setup deduced at the end of the section will be used throughout the paper. 
%\begin{remark}\label{r:affine}
Observe that the affine span $W$ of $P$ in $M'_\R$ is $G$-invariant. If we fix a lattice point $\bar{u} \in W \cap M'$, then $M := W \cap M' - \bar{u}$ has the structure of a lattice of rank $d$ and $G$ acts linearly on $M$ via
\[
g \cdot (u - \bar{u})  = gu - g\bar{u} = (gu  - g\bar{u} + \bar{u}) - \bar{u},
\] 
for all $g \in G$ and $u \in W \cap M'$. Regarding $P$ as a lattice polytope in $M$, we see that $P$ is 
invariant under $G$ `up to translation'. That is, if we set consider the function $w: G \rightarrow M$ defined by $w(g) = g\bar{u} - \bar{u}$, then $w(1) = 0$,  $w(gh) = w(g) + g \cdot w(h)$, and if we identify $P$ with the lattice polytope $P - \bar{u}$ in $M$, then  $g \cdot P = P - w(g)$ in $M$ for all $g \in G$. 

%\end{remark}

%\begin{remark}\label{r:reduction}
Conversely, assume that $G$ acts linearly on a $d$-dimensional lattice $M$, and $P$ is a $d$-dimensional lattice polytope which is invariant under $G$ `up to translation'.  That is, assume there exists a function $w : G \rightarrow M$ satisfying $w(1) = 0$ and $w(gh) = w(g) + g \cdot w(h)$, and such that $g \cdot P = P - w(g)$ for all $g \in G$. Then  $G$ acts linearly on the lattice $M' = M \oplus \Z$ as follows:  $g \cdot (u, \lambda) = (g \cdot u - \lambda w(g), \lambda)$ for any $g \in G$ and $(u,\lambda) \in M'$. If we identify $P$ with the lattice polytope $P \times  1$ in $M'$, then $P$ is invariant under the action of $G$.  
Note that we recover the original linear action of $G$ on $M$ and the induced action on $P$ `up to translation' via the action of $G$ on $M \times  0  \subseteq M'$ and $P \times  0$ respectively. %Moreover, the complex $G$-representation $(M')_\C$ %= M' \otimes_\Z \C$ 
%is isomorphic to $M_\C \oplus \C$, where $\C$ denotes the trivial representation. 
%\end{remark}

The preceding discussion motivates the following \define{setup}:

%\emph{Let $G$ be a finite group acting linearly on a lattice $M$ of rank $d$ via $\rho: G \rightarrow GL(M)$, and consider the action of $G$ on $M' = M \oplus \Z$ via the trivial action on the second co-ordinate

%\begin{center}
\emph{Let $G$ be a finite group acting linearly on a lattice $M' = M \oplus \Z$ of rank $d + 1$ such that the projection $M' \rightarrow \Z$ is equivariant with respect to the trivial action of $G$ on $\Z$. Let $P \subseteq M_\R \times  1$ be a $G$-invariant, $d$-dimensional lattice polytope.  
}

\emph{
By identifying $M$ with $M \times 0$, we regard $M$ as a lattice with a linear $G$-action
$\rho: G \rightarrow GL(M)$. % and consider the corresponding complex $G$-representation  $M_\C$. 
We let $\det(\rho)$  denote the linear character $\wedge^d \rho: G \rightarrow \{ \pm 1 \}$.  
We often identify $P$ with the lattice polytope $\{ u \in M_\R \mid u \times 1 \in P \}$ in $M_\R$, which is $G$-invariant `up to translation'. } %= M \otimes_\Z \C$.    }

%%%%%%%%%%%%%%%%%%%%%%%%%%%%%%%%%%%%%%%%%%%%%%%
%%%%%%%%%%%%%%%%%%%%%%%%%%%%%%%%%%%%%%%%%%%%%%%
%%%%%%%%%%%%%%%%%%%%%%%%%%%%%%%%%%%%%%%%%%%%%%%
%%%%%%%%%%%%%%%%%%%%%%%%%%%%%%%%%%%%%%%%%%%%%%%
%%%%%%%%%%%%%%%%%%%%%%%%%%%%%%%%%%%%%%%%%%%%%%%
\section{Equivariant Ehrhart theory}\label{s:ee}
%%%%%%%%%%%%%%%%%%%%%%%%%%%%%%%%%%%%%%%%%%%%%%%
%%%%%%%%%%%%%%%%%%%%%%%%%%%%%%%%%%%%%%%%%%%%%%%
%%%%%%%%%%%%%%%%%%%%%%%%%%%%%%%%%%%%%%%%%%%%%%%
%%%%%%%%%%%%%%%%%%%%%%%%%%%%%%%%%%%%%%%%%%%%%%%

In this section, we recall some results from \cite{YoEquivariant} on a representation-theoretic generalization of Ehrhart theory. We also record some useful representation theory lemmas. We continue with the notation of Section~\ref{s2}, and if $G$ acts on a set $S$, then we write $\chi_{\lef S \rig}$ for the corresponding permutation representation.

For any positive integer $m$, let $\chi_{mP} = \chi_{\lef mP \cap M \rig}$ denote the permutation representation corresponding to the action of $G$ on the lattice points $mP \cap M$ of $mP$, 
and let $\chi_{mP} = 1$ when $m = 0$. If $G$ acts on $M$ via $\rho: G \rightarrow GL(M)$, and 
$R(G)$ denotes the representation ring of $G$,  
then we may 
write
%consider the following equality of power series in $R(G)[[t]]$
\begin{equation*}\label{e:hstar}
\sum_{m \ge 0} \chi_{mP} t^m = \frac{ \phi[t]}
%{ (-1)^{d + 1}(t  - 1)(t^d - V \, t^{d - 1} + \bigwedge^2 V \,  t^{d - 2}  - \cdots + (-1)^d \bigwedge^d V)}
%\bigwedge^d V - \bigwedge^{d - 1} V t + \cdots + (-1)^{d - 1} V t^{d - 1}+ (-1)^d )}
{(1- t)\det(I - \rho t)}, 
\end{equation*}
for some power series  $\phi[t] = \phi_{P,G}[t] = \sum_{i \ge 0} \phi_i t^i \in R(G)[[t]]$, where
%, by Lemma~\ref{l:exterior},  
%and virtual representations $\{  \phi_i^* \in R(G) \mid i \ge 0 \}$. 
%Here $\det(I - \rho t)$ is defined as follows: 
%if $M$ is the lattice defined by a translate of the lattice points in the affine span of $P$ to the origin, then $M$ inherits a linear $G$-action (see Section~\ref{s2}), and %Setting  $M_\C = M \otimes_\R \C$, we define 
% and we may consider the complex $G$-representation $M_\C = M \otimes_\R \C$. We set 
\[
\det(I - \rho t) = 1 - \rho \, t + \wedge^2  \rho \,  t^2 - \cdots + (-1)^d \wedge^d \rho \,  t^d.  
\]
%Throughout, we will let $\det(\rho)$  denote the linear character $\wedge^d \rho: G \rightarrow \{ \pm 1 \}$.  
%In order to interpret this result, we recall the 
The following well-known lemma is useful for interpreting this definition of $\phi[t]$. 
%We include a proof for the convenience of the reader. 

%refer the reader to Lemma~3.1 in \cite{YoEquivariant} for a simple proof. 

\begin{lemma}\label{l:exterior}
Let $G$ be a finite group and let $V$ be an $r$-dimensional representation. Then 
\begin{equation*}%\label{e:exterior}
\sum_{m \ge 0} \Sym^m V t^m =  \frac{ 1}{1 - Vt + \wedge^2 V t^2 - \cdots + (-1)^r \wedge^r V t^r}.
\end{equation*}
Moreover, if an element $g \in G$ acts on $V$ via a matrix $A$, and if $I$ denotes the identity $r \times r$ matrix, then both sides equal $\frac{1}{\det(I - tA)}$ %$1/\det(I - tA)$ 
when the associated characters are evaluated at $g$. 
% (i.e. under the ring homomorphism
%%under the isomorphism 
%$\tr(g; \; ) : R(G)[[t]] \cong \C_{\class}(G)[[t]] \rightarrow \C[[t]]$). 
\end{lemma}
%\begin{proof}
%The following simple proof was related to me by John Stembridge. If an element $g \in G$ acts of $V$, then, since $g$ has finite order, we may assume, after a change of basis, that $g$ acts via a diagonal matrix
% $(\lambda_1,\ldots, \lambda_r)$. Then both sides of  the equation equal $\frac{1}{(1 - \lambda_1 t)\cdots(1 - \lambda_rt)}$ when evaluated at $g$. 
%\end{proof}

The power series $h^*(t) = \sum_{i \ge 0} \dim \phi_i t^i$ is a polynomial of degree at most $d$, called the 
\emph{$h^*$-polynomial} of $P$ (see, for example, \cite{BRComputing}). In particular, if the virtual representations $\phi_i$ are effective representations, then $\phi[t]$ is a polynomial of degree at most $d$.  
%We recall the following result from \cite{YoEquivariant}. 
For any positive integer $m$, let $\chi^*_{mP} = \chi_{\lef \Int(mP) \cap M \rig}$
denote the permutation representation corresponding to the action of $G$ on the interior lattice points
$\Int(mP) \cap M$ of $mP$. 

\begin{corollary}\cite[Corollary~6.6]{YoEquivariant}\label{c:reciprocity}
%If $\phi[t]$ is a polynomial, then 
With the notation above, if $\phi[t]$ is a polynomial, then 
\begin{equation*}%\label{e:interior}
\sum_{m \ge 1} \chi^*_{mP} t^m = \frac{ t^{d + 1}\phi[t^{-1}]}
%{ (-1)^{d + 1}(t  - 1)(t^d - V \, t^{d - 1} + \bigwedge^2 V \,  t^{d - 2}  - \cdots + (-1)^d \bigwedge^d V)}
%\bigwedge^d V - \bigwedge^{d - 1} V t + \cdots + (-1)^{d - 1} V t^{d - 1}+ (-1)^d )}
{(1- t)\det(I - \rho t) }.
\end{equation*}
In particular, $\phi[t]$ has degree at most $d$ and 
%if $\phi[t]$ is a polynomial, then 
$\phi_d = \chi^*_P$. 
\end{corollary}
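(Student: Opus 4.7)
The plan is to establish the identity character-by-character: since any element of $R(G)[[t]]$ is determined by its character values, it suffices to show that evaluating both sides of the desired equation at each $g \in G$ yields equal rational functions in $t$.

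Fix $g \in G$. Because $G$ acts linearly on $M' = M \oplus \Z$ preserving heights, the $g$-fixed lattice points of $mP$ are precisely the lattice points of the fixed polytope $mP^g$ in the fixed sublattice $(M')^g \subseteq M'$, which has rank $d^g + 1$ where $d^g = \dim P^g$. Thus $\chi_{mP}(g)$ equals the Ehrhart function $L_{P^g}(m)$. The key step is to verify also that $\chi^*_{mP}(g) = L^*_{P^g}(m)$, i.e., that the relative interior of $P^g$ is contained in $\Int(P)$. Averaging any interior point of $P$ over $\langle g \rangle$ produces a $g$-fixed point in $\Int(P)$, so the affine fixed subspace of $g$ meets $\Int(P)$; a standard convexity argument---extending the segment from this interior fixed point past any purported boundary point in the relative interior of $P^g$ and using the defining property of the relative interior---then forces the relative interior of $P^g$ to lie in $\Int(P)$.

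With this identification, classical Ehrhart--Macdonald reciprocity applied to $P^g$ as a rational polytope in $(M')^g$ yields
\[
\sum_{m \ge 1} L^*_{P^g}(m) \, t^m = (-1)^{d^g + 1} F_g(t^{-1}), \qquad F_g(t) := \frac{\phi[t](g)}{(1-t)\det(I - \rho(g) t)}.
\]
The substitution $t \mapsto t^{-1}$ in $F_g$ is governed by the algebraic identity
\[
\det(I - \rho(g) t^{-1}) = (-t)^{-d} \det(\rho(g)) \det(I - \rho(g) t),
\]
valid because $\rho$ is a $\Z$-representation and hence the eigenvalues of $\rho(g)$ are roots of unity closed under reciprocation. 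Combined with $(1 - t^{-1}) = -t^{-1}(1-t)$, this gives
\[
F_g(t^{-1}) = (-1)^{d+1} \det(\rho(g)) \cdot \frac{t^{d+1} \phi[t^{-1}](g)}{(1-t)\det(I - \rho(g) t)}.
\]
The final ingredient is the sign identity $(-1)^{d - d^g} = \det(\rho(g))$, which holds because non-unit eigenvalues of $\rho(g)$ pair up under complex conjugation (each pair contributing $+1$ to the determinant) except for the eigenvalue $-1$, whose multiplicity contributes a sign to both sides. Combining everything, the prefactor $(-1)^{d^g + 1} \cdot (-1)^{d+1} \det(\rho(g)) = (-1)^{d + d^g} \det(\rho(g)) = 1$ collapses, yielding the character-level identity
\[
\sum_{m \ge 1} \chi^*_{mP}(g) \, t^m = \frac{t^{d+1} \phi[t^{-1}](g)}{(1-t)\det(I - \rho(g) t)}
\]
for every $g \in G$, and hence the desired identity in $R(G)[[t]]$.

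The final two assertions follow immediately. Since the left-hand side of the displayed equation in the corollary is a power series beginning at $t^1$, the polynomial $t^{d+1} \phi[t^{-1}]$ must have no nonpositive powers of $t$, forcing $\deg \phi[t] \le d$; comparing the coefficient of $t$ on both sides then gives $\phi_d = \chi^*_P$. The main obstacle is the convexity argument identifying $\chi^*_{mP}(g)$ with $L^*_{P^g}(m)$ in the second paragraph; once this geometric bridge is in place, the remaining steps are formal manipulations of rational functions combined with classical (rational) Ehrhart--Macdonald reciprocity applied to each fixed polytope.
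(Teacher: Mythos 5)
Your proof is correct: the character-by-character reduction to classical Ehrhart--Macdonald reciprocity for the rational fixed polytopes $P^g$ (justified by the averaging/convexity argument that $\mathrm{relint}(P^g) = \Int(P) \cap (M'_\R)^g$), together with the sign identity $(-1)^{d - \dim P^g} = \det\rho(g)$ coming from the pairing of non-real eigenvalues, is exactly the argument underlying this statement. Note that the paper itself gives no proof here---it simply cites \cite[Corollary~6.6]{YoEquivariant}---and your argument is essentially the one given in that reference, so your proposal matches the intended proof rather than offering a genuinely different route.
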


We have an explicit description of $\phi[t]$ when $P$ is a simplex. 
% generalizing the well-known description of the $h^*$-polynomial of a simplex.
 Recall that $P$ is a \emph{simplex} if it has precisely $d + 1$ vertices $\{ v_0, \ldots, v_d \}$ in $M$. In this case, %let $C$ denote the cone over $P$ and
  we define 
 \[
 \BOX(P) = \{ v \in M \oplus \Z  \mid  v = \sum_{i = 0}^d a_i (v_i, 1) \textrm{ for some } 0 \le a_i < 1 \}, 
 \]
 and %let $\BOX(P)^g$ denote the elements of $\BOX(P)$ fixed by $g \in G$. 
let $u: M \oplus \Z \rightarrow \Z$ denote projection onto the second co-ordinate. 
%The result below is  well-known when $G$ is trivial. 
%The result below implies that the virtual representations $\phi_i$ are in fact effective representations when $P$ is a simplex. 

\begin{proposition}\cite[Proposition~6.1]{YoEquivariant}\label{p:simplex}
With the notation above, if $P$ is a simplex, then $\phi_i$ is the permutation representation induced by the action of $G$ on $\{ v \in  \BOX(P)  \mid u(v) = i \}$. 
%In particular, 
%if $g$ acts on $M$ via an \comment{check} integer-valued matrix $A$, then
%\[
%\sum_{m \ge 0} f_{P_g}(m) t^m = \frac{\sum_{v \in \BOX(P)^g}   t^{u(v)} }{(1 - t)\det(I - \rho(g)t)},
%\]
%and the multiplicity of the trivial representation (respectively $\det(\rho)$) in $\phi_i$ equals the number of $G$-orbits of 
%$\{ v \in  \BOX(P)  \mid u(v) = i \}$ (respectively  the number of $G$-orbits of 
%$\{ v \in  \BOX(P)  \mid u(v) = i \}$ whose isotropy subgroup is contained in $\{ g \in G \mid \det\rho(g) = 1 \}$).
%If $h^*_i > 0$, then the trivial representation  occurs with non-zero multiplicity in $\phi_i$. 
% we have the following formula for the generating series of the Ehrhart quasi-polynomial of $P_g$ 
\end{proposition}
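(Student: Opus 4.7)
The plan is to verify the claimed identity at the level of characters. Since a virtual representation in $R(G)$ is determined by its character, it suffices to show, for every $g \in G$, that
\[
\tr(g;\phi[t]) \;=\; \sum_{w_0 \in \BOX(P)^g} t^{u(w_0)}.
\]

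First I would exploit the simplicial structure to decompose lattice points in the relevant cone. Because $P$ is a $d$-simplex, the vectors $(v_0,1),\dots,(v_d,1)$ form a $\Q$-basis of $M'_\Q = M_\Q \oplus \Q$, and their $\R_{\ge 0}$-span is a simplicial cone. By the standard half-open parallelepiped argument, every lattice point $w$ in this cone has a unique expression
\[
w \;=\; w_0 + \sum_{i=0}^d n_i (v_i,1), \qquad w_0 \in \BOX(P),\; n_i \in \Z_{\ge 0},
\]
and projecting via $u$ gives a bijection between $mP \cap M$ and such decompositions with $u(w_0) + \sum_i n_i = m$.

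Next I would count $g$-fixed points. The element $g$ permutes the vertices $(v_i,1)$ by some $\sigma_g \in \Sym_{d+1}$, and by uniqueness of the decomposition a lattice point is $g$-fixed iff $w_0 \in \BOX(P)^g$ and $(n_0,\dots,n_d)$ is constant on the cycles of $\sigma_g$. Writing $\ell_1,\dots,\ell_r$ for the cycle lengths of $\sigma_g$, summing over $m$ yields
\[
\sum_{m \ge 0} \tr(g;\chi_{mP})\,t^m \;=\; \Bigl(\sum_{w_0 \in \BOX(P)^g} t^{u(w_0)}\Bigr) \prod_{j=1}^r \frac{1}{1 - t^{\ell_j}}.
\]

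Finally I would identify the denominator with $\tr\bigl(g;(1-t)\det(I - \rho t)\bigr)$ in two ways, both computing the characteristic polynomial of $g$ acting on $M'_\Q$. In the vertex basis $\{(v_i,1)\}$ the action is the permutation matrix of $\sigma_g$, giving $\prod_j (1 - t^{\ell_j})$; in the splitting from Section~\ref{s2} the action is the block matrix
\[
\begin{pmatrix} \rho(g) & -w(g) \\ 0 & 1 \end{pmatrix},
\]
giving $(1-t)\det(I - t\rho(g))$. Equating these and invoking the defining relation for $\phi[t]$ produces $\tr(g;\phi[t]) = \sum_{w_0 \in \BOX(P)^g} t^{u(w_0)}$, which is precisely the character of $\sum_{i} \chi_{\BOX_i(P)}\,t^i$; this forces $\phi_i = \chi_{\BOX_i(P)}$ in $R(G)$, as claimed. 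The main point requiring care is bookkeeping of the two bases of $M'_\Q$ when identifying the two factorizations of $\det(I - tg|_{M'_\Q})$; once that is in place, the fixed-point count follows mechanically from the uniqueness of the simplicial cone decomposition.
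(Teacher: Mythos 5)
Your proof is correct. Note that the paper itself contains no proof of this proposition --- it is quoted from the author's earlier work \cite{YoEquivariant} --- and your argument (the unique half-open parallelepiped decomposition of lattice points in the simplicial cone over $P \times 1$, combined with evaluating the defining relation for $\phi[t]$ at each $g \in G$ and matching $\prod_j (1 - t^{\ell_j})$ with $(1-t)\det(I - t\rho(g))$ via basis-independence of the characteristic polynomial) is exactly the standard equivariant Ehrhart argument used in that reference, so there is nothing to correct.
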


A $d$-dimensional lattice polytope $P$ in $M$ is \define{reflexive} if the origin is the unique interior lattice point of $P$ and every non-zero lattice point in $M$ lies on the boundary of $mP$ for some positive integer $m$. 
%We have the following equivariant version of Theorem~\ref{t:basicreflexive}. 
%Recall that the degree $s$ of $P$ is the degree of $h^*(t)$ and the codegree of $P$ is $l = d + 1 - s$. 

%It is a result of Hibi  \cite{HibDual} that $P$ is a translate of a reflexive polytope if and only if $h^*(t) = t^d h^*(t^{-1})$. \comment{might need to recall notation} \comment{recall Stanley's result, including $f_{P}^\circ(m) = f_{P}(m - l)$ for $m \ge l$} \comment{Want notation with $P$ at height $1$ and $\sigma$ equal to the cone over $P$} 
%Observe that if a reflexive polytope $P$ is $G$-invariant `up to translation', then $P$ is $G$-invariant, since 

\begin{corollary}\cite[Corollary~6.9]{YoEquivariant}\label{c:reflexive}
If $P$ is a $G$-invariant lattice polytope and $\phi[t]$ is a polynomial, then 
%the following are equivalent
%\begin{itemize}
%\item $\phi[t] = t^d \phi[t^{-1}]$,
%\item  $\chi^*_{mP} = \chi_{(m - 1)P}$ for $m \ge 1$,
%%\item  $h^*_{P}(t) = t^{s} h^*_{P}(t^{-1})$,
%%\item  $f^\circ_P(m) = f_P(m - l)$ for $m \ge l$,
%%\item  $h^*_{P}(t) = t^{s} h^*_{P}(t^{-1})$,
%\item  
$P$ is a translate of a reflexive polytope
%\end{itemize}
%$P$ is reflexive 
if and only if $\phi[t] = t^d \phi[t^{-1}]$. 
\end{corollary}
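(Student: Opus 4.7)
The plan is to show that both conditions of the corollary are equivalent to the identity of permutation representations
\[
\chi^*_{mP} = \chi_{(m-1)P} \quad \text{for all } m \ge 1,
\]
with the convention $\chi_{0P} = 1$.

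First, by the definition of $\phi[t]$ and by Corollary~\ref{c:reciprocity}, both generating series $\sum_{m \ge 0} \chi_{mP} t^m$ and $\sum_{m \ge 1} \chi^*_{mP} t^m$ can be written as rational functions with the common denominator $(1-t)\det(I-\rho t)$. Clearing this denominator and matching coefficients in $R(G)[[t]]$ turns the identity $\phi[t] = t^d \phi[t^{-1}]$ into the displayed identity of permutation representations, so the problem reduces to characterizing this identity in terms of reflexivity.

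Second, I would establish the forward direction geometrically. If $P$ is a translate of a reflexive polytope, then the translation vector is the unique interior lattice point of $P$, and it is necessarily $G$-fixed; translation by a $G$-fixed vector induces a $G$-equivariant bijection on lattice points of every dilate, so we may reduce to the case where $P$ itself is reflexive with $0$ as its unique interior lattice point. The facets of $mP$ are then cut out by $\langle \cdot, v_F \rangle \ge -m$ as $v_F$ ranges over the vertices of the dual lattice polytope $P^\vee \subseteq M^*$. For $u \in M$, integrality makes $\langle u, v_F \rangle > -m$ equivalent to $\langle u, v_F \rangle \ge -(m-1)$ for every such $v_F$; hence $\Int(mP) \cap M = (m-1)P \cap M$ as subsets of $M$. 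This identification is tautologically $G$-equivariant, yielding the required equality of permutation representations.

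Third, for the converse, the case $m = 1$ of the identity reads $\chi^*_P = 1$; thus $P$ has a unique interior lattice point, which is $G$-fixed. After translating so that this point is the origin, evaluating characters at the identity gives $\#(\Int(mP) \cap M) = \#((m-1)P \cap M)$ for every $m \ge 1$, and the classical (non-equivariant) theorem of Hibi, which characterizes translates of reflexive polytopes by the palindromic property of their $h^*$-polynomial, then implies that $P$ is reflexive. The only subtle point in this direction is the handling of the translation; but this is immediate once one observes that the unique interior lattice point of $P$ is forced to be $G$-fixed by the hypothesis that $\chi^*_P$ is the trivial one-dimensional representation.
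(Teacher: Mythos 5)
Your proof is correct. Note first that this paper does not actually prove the statement: it is quoted from \cite[Corollary~6.9]{YoEquivariant}, so there is no in-paper argument to compare against, and your proposal has to stand on its own --- which it does. The reduction of $\phi[t] = t^d\phi[t^{-1}]$ to the family of identities $\chi^*_{mP} = \chi_{(m-1)P}$ for all $m \ge 1$ via Corollary~\ref{c:reciprocity} is exactly right: the common denominator $(1-t)\det(I - \rho\, t)$ has constant term $1$, hence is invertible in $R(G)[[t]]$, so equality of the two series is equivalent to equality of the numerators $t\,\phi[t]$ and $t^{d+1}\phi[t^{-1}]$. Your forward direction also correctly isolates the one genuinely equivariant point, namely that the unique interior lattice point of $P$ is fixed by the (affine) $G$-action, so that translating it to the origin linearizes the action and makes the set identity $\Int(mP)\cap M = (m-1)P\cap M$ (which holds for reflexive $P$ by integrality of the facet inequalities) tautologically $G$-equivariant. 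For the converse, invoking Hibi's theorem after evaluating characters at the identity is legitimate, since the conclusion ``$P$ is a translate of a reflexive polytope'' is purely non-equivariant. One remark: Hibi's theorem can be avoided, making the argument self-contained relative to the paper's own definition of reflexivity. After translating the unique interior lattice point (the $m=1$ case of your identity) to the origin, one has the inclusion $(m-1)P \cap M \subseteq \Int(mP)\cap M$, because $0 \in \Int(P)$ gives $(m-1)P \subseteq (m-1)P + \Int(P) \subseteq \Int(mP)$; equality of the (finite) cardinalities then forces equality of sets for every $m$. The $m=1$ identity says $0$ is the unique interior lattice point, and for any nonzero $u \in M$, taking $m$ minimal with $u \in mP$ gives $u \notin (m-1)P \cap M = \Int(mP)\cap M$, so $u$ lies on the boundary of $mP$ --- which is precisely the definition of reflexive used in Section~\ref{s:ee}.
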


 We say that a reflexive polytope $P$ is \define{non-singular} if the vertices of each facet of $P$ form a basis for $M$. If $P$ is a $G$-invariant, non-singular, reflexive polytope, then the fan $\triangle$ over the faces of $P$ determines a smooth, projective toric variety $Z = Z(\triangle)$, with an action of $G$ via toric morphisms.

\begin{proposition}\cite[Proposition~8.1]{YoEquivariant}\label{p:reflexive}
With the notation above, if $P$ is a $G$-invariant, non-singular, reflexive polytope, then
$\phi_i = H^{2i} Z \in R(G)$. % in the representation ring $R(G)$. 
% there is an isomorphism of $G$-representations 
%$\phi_i$ is isomorphic to the $G$-representation  $H^{2i} Y'$. 
\end{proposition}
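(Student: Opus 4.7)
My plan is to verify the identity $\phi[t] = \sum_i H^{2i}(Z) \cdot t^i$ in $R(G)[t]$ by comparing two computations of the same equivariant generating function. The strategy is to realize $\chi_{mP}$ as the character of $H^0(Z, \mathcal{O}(mD_P))$ and to apply equivariant localization at $T$-fixed points.

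First, I would exploit the reflexive, non-singular geometry. Because $P$ is reflexive, the ample $T$-invariant divisor $D_P$ associated to $P$ is the anticanonical divisor $-K_Z$, and the $G$-invariance of $P$ makes $D_P$ a $G$-equivariant divisor. The monomial basis of $H^0(Z, \mathcal{O}(mD_P))$ indexed by $mP \cap M$ yields a $G$-equivariant isomorphism $H^0(Z, \mathcal{O}(mD_P)) \cong \chi_{mP}$, and toric cohomology vanishing gives $H^i(Z, \mathcal{O}(mD_P)) = 0$ for $i > 0$ and $m \ge 0$. Hence $\chi_{mP} = \chi(Z, \mathcal{O}(mD_P))$ in $R(G)$. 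Moreover, since $Z$ is a smooth projective toric variety, Danilov's theorem gives $H^{p,q}(Z) = 0$ for $p \ne q$, so $\sum_i H^{2i}(Z) \, t^i$ coincides with the equivariant Hirzebruch $\chi_{-t}$-genus.

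Next, I would fix $g \in G$ and compute both sides via equivariant localization. The holomorphic Lefschetz fixed-point theorem applied to $g$ acting on the cohomology of $\mathcal{O}(mD_P)$ expresses $\chi(Z, \mathcal{O}(mD_P))(g)$ as a sum of local contributions at the $g$-fixed $T$-fixed points, which correspond to $g$-fixed vertices of $P$. Non-singularity of $P$ ensures the primitive edge vectors at each vertex form a $\Z$-basis of $M$, so the local denominators geometrically sum over $m$ to factors dividing $(1 - t) \det(I - \rho(g) \, t)$. On the other hand, the equivariant Poincar\'e polynomial $\sum_i \tr(g; H^{2i}(Z)) \, t^i$ is computed by a Bialynicki--Birula decomposition $Z = \bigsqcup_v C_v$ indexed by $T$-fixed points; for a $G$-averaged generic $1$-parameter subgroup, $G$ permutes the cells, and the trace receives contributions only from $g$-fixed vertices.

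Finally, I would match the two local contributions. Multiplying the defining relation for $\phi[t]$ by $(1 - t) \det(I - \rho(g) \, t)$ and substituting the localization expansions, both sides reduce to the same sum indexed by $g$-fixed vertices of $P$, yielding $\phi_i(g) = \tr(g; H^{2i}(Z))$ coefficient-wise. Since $g \in G$ is arbitrary, this proves $\phi_i = H^{2i}(Z)$ in $R(G)$. The main obstacle is this matching step: one must track how the eigenvalues of $g$ on the tangent space at a fixed vertex, together with the Bialynicki--Birula cell dimensions, assemble into exactly the right factors on each side. This is essentially an equivariant enhancement of the classical combinatorial identity relating the $h$-vector of a simplicial polytope (hence the Betti numbers of $Z$) to Brion's formula for the Ehrhart series.
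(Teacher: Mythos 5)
The paper itself does not prove this proposition --- it is imported wholesale from \cite[Proposition~8.1]{YoEquivariant} --- so the comparison can only be with your argument on its merits, and there your proof has a fatal gap: it conflates the two mirror-dual toric varieties attached to $P$, and correspondingly the two dual notions of smoothness. Here $Z = Z(\triangle)$ is the toric variety of the fan over the faces of $P$; this fan lives in $M_\R$, so the dense torus of $Z$ is $\Spec \C[N]$ and the $T$-eigensections of any toric line bundle on $Z$ are indexed by characters of that torus, i.e.\ by elements of $N$, not of $M$. Hence there is no line bundle $\mathcal{O}(mD_P)$ on $Z$ with $H^0(Z,\mathcal{O}(mD_P)) \cong \chi_{mP}$; the variety carrying that bundle (and the identification $-K = D_P$ you invoke) is $Y$, the toric variety of the \emph{normal} fan of $P$, which is a different space. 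Moreover, ``non-singular'' in this paper means the vertices of each facet of $P$ form a basis of $M$, i.e.\ the fan of $Z$ is unimodular; it does \emph{not} mean the cones at the vertices of $P$ are unimodular --- that is the separate notion ``smooth'' of Section~\ref{s:mirror}. For instance $P = \conv\{e_1,\ldots,e_d,\,-e_1-\cdots-e_d\}$ is non-singular and reflexive with $Z \cong \P^d$ and $\phi[t] = 1 + t + \cdots + t^d$, yet its vertex cones have index $d+1$ and $Y$ is a singular quotient of $\P^d$. So your localization either runs on $Z$, where the bundle being localized does not exist, or on $Y$, where the fixed points are quotient singularities (so the smooth Atiyah--Bott denominators you describe are wrong) and whose cohomology is in any case not the $H^{2i}Z$ of the statement.

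That this is not a repairable bookkeeping slip is shown by the square $P = \conv\{\pm e_1 \pm e_2\}$: it is reflexive and satisfies \emph{your} reading of non-singularity (the primitive edge vectors at each vertex form a basis; its normal fan is that of $\P^1 \times \P^1$), yet its $h^*$-vector is $(1,6,1)$ while the Betti numbers of $\P^1\times\P^1$ are $(1,2,1)$, and the fan over its faces is not smooth. An argument using only the hypotheses you actually invoke would therefore prove a false statement; the facet-basis hypothesis and the variety $Z$ must enter essentially. Two further steps would fail even on the correct spaces: the holomorphic Lefschetz formula for $g$ sums over all components of the fixed locus, which contains the fixed subtorus and is positive-dimensional whenever $\rho(g)$ has eigenvalue $1$, not just the $g$-fixed vertices; and your Bia{\l}ynicki-Birula step needs a one-parameter subgroup that is simultaneously generic and $G$-invariant, which need not exist (if $-I \in \rho(G)$, as for any centrally symmetric invariant polytope, the only invariant cocharacter is $0$). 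A correct proof instead stays combinatorial: decompose the nonzero lattice points of the dilates $mP$ according to the unimodular cones of $\triangle$ over the faces of $P$, and compare with the orbit decomposition $E_G(Z) = \sum_{[\tau]} \Ind_{G_\tau}^G E_{G_\tau}(T_\tau)$ of Proposition~\ref{p:induced}, using that $Z$ is smooth and complete so that $E_G(Z) = \sum_i H^{2i}(Z)\,(uv)^i$.
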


We will often use the following lemma on permutation representations. 
If $G$ acts transitively on a set $S$, then the associated \emph{isotropy group} $H$ is 
the subgroup of $G$ which fixes a given $s$ in $S$, and is well-defined up to conjugation.

\begin{lemma}\label{l:permutation}
If  $G$ acts on a set $S$, then $\chi_{\lef S \rig}(g)$ equals the number of elements of $S$ fixed by $g$.
If $\lambda: G \rightarrow \C$ is a $1$-dimensional representation, then  
the multiplicity of $\lambda$ in $\chi_{\lef S \rig}$ is equal to the number of $G$-orbits of $S$
whose isotropy subgroup is contained in the subgroup $\lambda^{-1}(1)$ of $G$.
%Suppose $G$ acts on a set $S$, and let $\chi$ denote the corresponding permutation representation. Then the value of the associated character 
%$\chi(g)$ at $g$ in $G$ equals the number of elements of $S$ fixed by $g$, and if $\lambda: G \rightarrow \C$ is a $1$-dimensional representation, then  
%the multiplicity of $\lambda$ in $\chi$ is equal to the number of $G$-orbits of $S$
%whose isotropy subgroup is contained in the subgroup $\lambda^{-1}(1)$ of $G$. 
%%the associated permutation representation %$V$
 %%is equal to the number of $G$-orbits of $S$. More generally, if $\lambda: G \rightarrow 
\end{lemma}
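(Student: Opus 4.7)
The plan is to treat the two assertions separately, both by standard character-theoretic arguments.

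For the first claim, I would use the natural basis of the representation space indexed by the elements of $S$. In this basis, the linear operator representing $g \in G$ is the permutation matrix with a $1$ in the $(s,s)$-entry precisely when $g \cdot s = s$, and zeros elsewhere on the diagonal. Taking the trace immediately gives $\chi_{\lef S \rig}(g) = \#\{ s \in S \mid g \cdot s = s\}$.

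For the second claim, I would first decompose $S$ into its $G$-orbits, $S = \bigsqcup_{i} S_i$, so that $\chi_{\lef S \rig} = \sum_i \chi_{\lef S_i \rig}$ by additivity of characters. Since the multiplicity of $\lambda$ in a sum is the sum of the multiplicities, it suffices to show that for a single transitive orbit $S_i$ with isotropy subgroup $H_i$, the multiplicity of $\lambda$ in $\chi_{\lef S_i \rig}$ is $1$ if $H_i \subseteq \lambda^{-1}(1)$ and $0$ otherwise. Choosing a basepoint identifies $S_i$ with the coset space $G/H_i$, so $\chi_{\lef S_i \rig} \cong \Ind_{H_i}^{G} 1$. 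By Frobenius reciprocity,
\[
\lef \lambda, \Ind_{H_i}^{G} 1 \rig_{G} = \lef \lambda|_{H_i}, 1 \rig_{H_i},
\]
and since $\lambda|_{H_i}$ is a $1$-dimensional character of $H_i$, this inner product equals $1$ when $\lambda|_{H_i}$ is the trivial character, i.e. $H_i \subseteq \lambda^{-1}(1)$, and vanishes otherwise. Note that the condition $H_i \subseteq \lambda^{-1}(1)$ is independent of the conjugacy ambiguity in the choice of $H_i$, since $\lambda^{-1}(1)$ is a normal subgroup (being the kernel of a $1$-dimensional character). Summing over the $G$-orbits of $S$ yields the desired count.

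There is no real obstacle here, as both statements are foundational representation-theoretic facts; the only minor point worth flagging in the write-up is the observation that the conclusion is well-defined despite isotropy subgroups being determined only up to conjugation, which is handled by the normality of $\lambda^{-1}(1)$.
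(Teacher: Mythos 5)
Your proof is correct. The paper states this lemma as a well-known fact and gives no proof of its own; your argument --- the trace of a permutation matrix counts fixed points, and for the multiplicity claim, decomposing $S$ into orbits, identifying each transitive piece with $\Ind_{H_i}^{G} 1$, and applying Frobenius reciprocity, with the normality of $\lambda^{-1}(1)$ disposing of the conjugacy ambiguity in the isotropy subgroup --- is precisely the standard argument the paper implicitly relies on.
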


We will also need the following simple lemma. 

\begin{lemma}\label{l:dual}
Suppose $G$ acts linearly on a lattice $N$ of rank $r$. Then we have isomorphisms of 
$G$-representations $\bigwedge^i N_\C \cdot \bigwedge^r N_\C \cong \bigwedge^{r - i} N_\C$.   
\end{lemma}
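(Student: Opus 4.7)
The plan is to pass to characters and exploit the fact that a lattice representation of a finite group is self-dual over $\C$. Fix $g \in G$. Since $G$ acts on the lattice $N$ by integer matrices, the characteristic polynomial of $\rho(g)$ on $N_\C$ has integer coefficients, and since $g$ has finite order its eigenvalues $\lambda_1,\ldots,\lambda_r$ are roots of unity. Closure of the characteristic polynomial under complex conjugation, combined with $\bar\lambda = \lambda^{-1}$ for roots of unity, implies that the multiset $\{\lambda_1,\ldots,\lambda_r\}$ is invariant under inversion. Consequently $e_i(\lambda_1^{-1},\ldots,\lambda_r^{-1}) = e_i(\lambda_1,\ldots,\lambda_r)$ for every $i$.

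Now I would compute the characters of the two sides at $g$. We have
\[
\chi_{\wedge^i N_\C}(g) \cdot \chi_{\wedge^r N_\C}(g) = e_i(\lambda_1,\ldots,\lambda_r) \cdot \lambda_1 \cdots \lambda_r,
\]
while $\chi_{\wedge^{r-i} N_\C}(g) = e_{r-i}(\lambda_1,\ldots,\lambda_r)$. The elementary identity, obtained by pairing each $(r-i)$-subset with its complement,
\[
e_{r-i}(\lambda_1,\ldots,\lambda_r) = (\lambda_1 \cdots \lambda_r) \cdot e_i(\lambda_1^{-1},\ldots,\lambda_r^{-1}),
\]
together with the inversion symmetry of the eigenvalues, yields equality of the two characters. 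Since representations of a finite group over $\C$ are determined by their characters, the claimed isomorphism follows.

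Alternatively, one can argue abstractly: the natural nondegenerate wedge pairing $\wedge^i N_\C \otimes \wedge^{r-i} N_\C \to \wedge^r N_\C$ is $G$-equivariant and provides a canonical isomorphism $\wedge^{r-i} N_\C \cong (\wedge^i N_\C)^* \otimes \wedge^r N_\C$. It then suffices to show $(\wedge^i N_\C)^* \cong \wedge^i N_\C$, which reduces to $N_\C \cong N_\C^*$. This in turn follows from $\chi_{N_\C}(g) \in \Z$, since the character of $N_\C^*$ is the complex conjugate of that of $N_\C$.

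I do not anticipate any genuine obstacle; the content of the lemma is simply the self-duality of integer representations of finite groups over $\C$, and either the character calculation or the abstract pairing argument is completely routine.
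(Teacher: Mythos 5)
Your proof is correct and is essentially the paper's own argument: the paper also fixes $g$, diagonalizes it with root-of-unity eigenvalues $\lambda_1,\ldots,\lambda_r$, and matches $\det(g)\,e_i(\lambda)$ with $e_{r-i}(\lambda)$ via the complement pairing, using integrality (in the paper's phrasing, $(\lambda_1\cdots\lambda_r)^2=1$ and reality of the trace; in yours, inversion-stability of the eigenvalue multiset) to close the computation. The abstract wedge-pairing argument you sketch as an alternative is also valid, but your main computation coincides with the paper's proof in all but cosmetic details.
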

\begin{proof}
If an element $g \in G$ acts of $N_\C$, then, since $g$ has finite order, we may assume, after a change of basis, that $g$ acts via a diagonal matrix
 $(\lambda_1,\ldots, \lambda_r)$,  for some roots of unity $\lambda_i$. 
 %As in the previous proof, we may assume that $g \in G$ acts on $N_\C$ via a diagonal matrix
 %$(\lambda_1,\ldots, \lambda_r)$, for some roots of unity $\lambda_i$. 
Since $\lambda_i^{-1} = \overline{\lambda}_i$ and  
%Note that  %every element 
$g$ acts on $\bigwedge^r N_\C$ via multiplication by $\pm 1$, it follows that
$(\lambda_1\cdots \lambda_r)^2 = 1$. 
%In particular, $(\bigwedge^r N_\C)^2$ is the trivial representation. 
%Note that $(\lambda_1\cdots \lambda_r)^2 = 1$  Then 
We conclude that the left hand side evaluated at $g$ is equal to 
 \[
 \lambda_1\cdots \lambda_r \sum_{k_1 < \cdots < k_i} \lambda_{k_1}\cdots \lambda_{k_i} = 
 %\sum_{k'_1 < \cdots < k'_{ r- i}} \lambda_{k'_1}^{-1} \cdots \lambda_{k'_{r - i}}^{-1} = 
  \sum_{k'_1 < \cdots < k'_{ r- i}} \overline{\lambda}_{k'_1} \cdots \overline{\lambda}_{k'_{r - i}}
  =   \sum_{k'_1 < \cdots < k'_{ r- i}} \lambda_{k'_1} \cdots \lambda_{k'_{r - i}}. 
   \]
\end{proof}

%%%%%%%%%%%%%%%%%%%%%%%%%%%%%%%%%%%%%%%%%%%%%%%
%%%%%%%%%%%%%%%%%%%%%%%%%%%%%%%%%%%%%%%%%%%%%%%
%%%%%%%%%%%%%%%%%%%%%%%%%%%%%%%%%%%%%%%%%%%%%%%
%%%%%%%%%%%%%%%%%%%%%%%%%%%%%%%%%%%%%%%%%%%%%%%
%%%%%%%%%%%%%%%%%%%%%%%%%%%%%%%%%%%%%%%%%%%%%%%
\section{Toric geometry and non-degenerate hypersurfaces}\label{toric}
%%%%%%%%%%%%%%%%%%%%%%%%%%%%%%%%%%%%%%%%%%%%%%%
%%%%%%%%%%%%%%%%%%%%%%%%%%%%%%%%%%%%%%%%%%%%%%%
%%%%%%%%%%%%%%%%%%%%%%%%%%%%%%%%%%%%%%%%%%%%%%%
%%%%%%%%%%%%%%%%%%%%%%%%%%%%%%%%%%%%%%%%%%%%%%%

In this section, we recall some basic facts about toric varieties and non-degenerate hypersurfaces in tori. We refer the reader to \cite{FulIntroduction} and \cite{TevCompactifications} for proofs of the statements below. 

We continue with the notation of Section~\ref{s2}. That is,
%Recall the setup from the previous section: 
let $G$ be a finite group acting linearly on a lattice $M' = M \oplus \Z$ of rank $d + 1$ such that the projection $M' \rightarrow \Z$ is equivariant with respect to the trivial action of $G$ on $\Z$. Let $P \subseteq M_\R \times 1 $ be a $G$-invariant, $d$-dimensional lattice polytope.  In what follows, we often consider $P$ as a lattice polytope in $M_\R$.

%\begin{remark}
%We next discuss the underlying toric geometry. 
If we let $\sigma$ denote the cone over $P \times 1$ in $M_\R'$, then $G$ acts on the $\N$-graded, semi-group algebra $R = \C[\sigma \cap M']$. This induces an action of $G$ 
%hence acts 
on the projective toric variety  $Y = \Proj R$ with torus $T = \Spec \C[M]$ via toric morphisms. If $N = \Hom(M, \Z)$ is the dual lattice to $M$, then $Y$ is the toric variety determined by the \emph{normal fan} to $P$ in $N_\R$, 
%Moreover, $Y$ 
and comes equipped with a 
$T$-equivariant ample line bundle $L$, which is preserved under the action of $G$. 
%With the notation of the previous section, 
%We may identify $\chi_{mP}$ with the action of $G$ on the $m^{\textrm{th}}$ graded piece $R_m = H^0(Y, L^{\otimes m})$ of $R$.
%We may identify the 
We may identify the action of $G$ on $H^0(Y, L^{\otimes m})$ with the action of $G$ on the  $m^{\textrm{th}}$ graded piece $R_m$ of $R$, and hence with the permutation representation $\chi_{mP}$ induced by the action of $G$ on $mP \cap M'$.

% If we let $K_Y$ be minus the sum of the $T$-invariant divisor of $Y$, then $K_Y$ is a $G$-invariant divisor  representing the canonical divisor of $Y$,  and we may identify $L^*(mP)$ with  $H^0(Y, \mathcal{O}(K_Y) \otimes L^{\otimes m})$. 
%Moreover, it follows from Serre duality that the latter is isomorphic (as a $G$-representation) with $H^d(Y, L^{\otimes -m})$. !!!!! Careful about Serre duality. Need to tensor with determinant rep. 

%we may identify $\chi_{mP}$ with the action of $G$ on the $m^{\textrm{th}}$ graded piece $R_m$ of $R$.  By standard toric geometry (see, for example, \cite{FulIntroduction}), $Y = \Proj R$ is a complex, projective toric variety with respect to the torus $T = \Spec \C[M]$, and comes equipped with a $T$-equivariant ample line bundle $L$. If $N = \Hom(M, \Z)$ is the dual lattice, then $Y$ is the toric variety determined by the \emph{normal fan} to $P$ in $N_\R$. 

%Moreover, $G$ acts on $Y$ via toric morphisms and satisfies $g^* L \cong L$ for all $g \in G$. 

If $u \in M$ corresponds to the monomial $\chi^u \in \C[M]$, then a hypersurface $X^\circ = \{ \sum_{ u \in P \cap M} a_u \chi^u = 0 \} \subseteq T$ defines a $G$-invariant hypersurface of $T$ if and only if $a_u = a_u' \in \C$ whenever $u$ and $u'$ lie in the same $G$-orbit of $P \cap M$.  
The closure $X$ of $X^\circ$ in $Y$
 %this $G$-invariant hypersurface in $T$ 
 is $G$-invariant and  may be regarded as the zero locus of a %$G$-invariant 
section of $L$. 
% defines a $G$-invariant hypersurface $X$ in $Y$. 
%Moreover, $X$ may be regarded as the zero locus of a %$G$-invariant 
%section of $L$. 
%and $\mathcal{O}(-X) \cong L$. 
%If $Q$ is a face of $P$, then 
The \define{Newton polytope} of $X^{\circ}$ is the convex hull of $\{ u \in M \mid a_u \ne 0 \}$ in $M_\R$.

We will need the notion of a \define{non-degenerate} hypersurface in a torus. Non-degenerate hypersurfaces were first studied by Khovanski{\u\i} \cite{KhoNewton}, %and \cite{KhoNewtonP}, 
and,
recently, have been extended to the notion of a \emph{Sch\"on} subvariety of a torus by Televev 
 \cite{TevCompactifications}.
%this concept has been extended to general subvarieties of tori by Televev, who introduced the notion of a \emph{Sch\"on} subvariety in \cite{TevCompactifications}. 
Recall that if $\P(\triangle)$ is a complete toric variety corresponding to a fan $\triangle$ in a lattice $N$, then each cone $\tau$ in $\triangle$ corresponds to a torus orbit $T_\tau = \Spec \C[M_\tau]$ in $\P(\triangle)$, where $M_\tau$ denotes the intersection of $M = \Hom(N,\Z)$ with 
$\tau^{\perp} = \{ u \in M_\R \mid \; \lef u, v \rig = 0 \; \textrm{ for all } v \in \tau \}$. If $\triangle$ is the normal fan to $P$, and $\tau_Q$ is the cone in $\triangle$ corresponding to a face $Q$ of $P$, then we will write $T_Q = T_{\tau_Q}$.

%then the intersection of $M$ with the  

\begin{definition}\label{d:nondegenerate}
With the notation above, 
%Let  $\P(\triangle)$ be a toric variety corresponding to a fan $\triangle$ in a lattice $N$, and let $M  = \Hom(M,\Z)$ denote the dual lattice. 
%Let $M$ be a lattice of rank $d$, with dual lattice $N = \Hom(M,\Z)$. 
let  $Z^\circ \subseteq T = \Spec \C[M]$ be a  hypersurface, and let $Z$ denote the closure of $Z^\circ$ in $\P(\triangle)$. 
Then $Z^\circ$
 is \define{non-degenerate} with respect to $\P(\triangle)$ if the intersection $Z \cap T_\tau$ of $Z$ with each torus orbit $\{ T_\tau \mid \tau \in \triangle \}$ is a smooth (possibly empty) hypersurface in $T_\tau$. 

The hypersurface 
%If $P$ is a lattice polytope in $M$, then 
$Z^\circ$ is \define{non-degenerate with respect to P} if 
$Z^\circ$ is non-degenerate with respect to the projective toric variety $Y$ corresponding to $P$, and $P$ is  the Newton polytope of $Z^\circ$. 
%. In this case, $P$ is necessarily
\end{definition}

\begin{remark}\label{r:induction}
 %With the notation above, 
 One can show that a hypersurface $Z^\circ = \{ \sum_{ u \in P \cap M} a_u \chi^u = 0 \} \subseteq T$ is non-degenerate with respect to $P$ if and only if $\{ \sum_{ u \in Q \cap M} a_u \chi^u = 0 \}$ defines a smooth (possibly empty) hypersurface in $T$ for each face $Q$ of $P$. 
 %$Z \cap T_Q$ of $T_Q$. 
 % where $\tau$ is the cone in the normal fan corresponding to $Q$. 
 Moreover, in this case, $Z \cap T_Q$ is non-degenerate with respect to $Q$. 
\end{remark}

%\begin{remark}\label{r:induction}
%With the notation above, if $Z^\circ$ is non-degenerate with respect to $P$
%\end{remark}

%\begin{example}
%Fermat
%\end{example}

%\begin{remark}
%With the notation of Definition~\ref{d:nondegenerate}, 
If  $Z^\circ \subseteq T$
 is non-degenerate with respect to $\P(\triangle)$, then the completion of the local ring of $Z$ at $z$ is isomorphic to the completion of the local ring of $\P(\triangle)$ at $z$. 
 In particular, 
 $Z$ is smooth if and only if $\P(\triangle)$ is smooth away from its torus fixed points. 
 %In fact, locally around every point in $Z$, $Z$ and $\P(\triangle)$ have the same singularities. 
 Moreover, if $\P(\Sigma) \rightarrow P(\triangle)$ is  a proper, birational toric morphism, then 
 $Z^\circ$ is non-degenerate with respect to $\P(\Sigma)$, and the closure $Z'$ of $Z^\circ$ in $\P(\Sigma)$ is %then $Z'$ is 
 the inverse image of $Z$. 
%\end{remark}

In our case, assume that $X^\circ \subseteq T$ defines a $G$-invariant, non-degenerate hypersurface with respect to $P$. There exists a smooth, complete %projective 
toric variety $\P = \P(\Sigma)$ with a $G$-action via toric morphisms, and a  $G$-invariant, proper, birational morphism $f: \P \rightarrow Y = Y(\triangle)$, where $\triangle$ is the normal fan to $P$ \cite{AWEquivariant}. If $X'$ denotes the closure of $X^\circ$ in $\P$, then, by the above discussion, we obtain a  $G$-equivariant resolution of singularities $X' \rightarrow X$.

For every cone $\tau'$ in $\Sigma$, let $\tau = \tau_Q$ denote the smallest cone in %the normal fan 
$\triangle$ containing $\tau'$, for some face $Q$ of $P$. If $G_Q$ denotes the isotropy group of $Q$ (i.e. the subgroup of $G$ which leaves $Q \subseteq P$ invariant), then $G_Q$ acts on the lattice $M_{\tau'}/M_{\tau}$, and hence on the corresponding torus $T_{\tau',f} := \Spec \C[M_{\tau'}/M_{\tau}]$. Moreover,  $f$ induces a $G_Q$-equivariant projection 
 \begin{equation}\label{e:resolution}
 X' \cap T_{\tau'} \isom (X \cap T_{\tau_Q}) \times T_{\tau',f}  \rightarrow X \cap T_{\tau_Q}. 
 \end{equation}

We may regard $X'$ as a section of the (globally generated) line bundle $f^*L$ on $\P$. For any non-negative integer $m$, we have  isomorphisms of $G$-representations
\begin{equation}\label{e:vanishing}
H^i(\P, \mathcal{O}_\P(mX')) \cong  \left\{\begin{array}{cl} 
H^0(Y, L^{\otimes m}) = \chi_{mP} & \text{if } i = 0 \\ 0 & \text{ otherwise. } \end{array}\right. 
\end{equation}

Finally, we recall some basic Hodge theory (cf. Section~\ref{s:HodgeDeligne}). Let $Z$ be a smooth, complete $n$-dimensional variety, let $D$ be a simple normal crossings divisor, and set $Z^\circ = Z \smallsetminus D$. 
%Working in the complex analytic topology, 
The sheaf $\Omega^1_{Z }(\log D)$ of \define{rational forms on $X$ with log poles on $D$}  is locally described as follows: 
if $z_1, \dots, z_d$ are local co-ordinates of $Z$ and $D$ is locally defined by $z_1z_2\cdots z_r = 0$, then $\Omega_{Z}^1(\log D)$ is the free $\mathcal{O}_Z$-module degenerated by $\frac{dz_1}{z_1}, \ldots, \frac{dz_r}{z_r}, dz_{r + 1}, \ldots, dz_n$. For any positive integer $p$, $\Omega^p_{Z }(\log D) = 
\bigwedge^p \Omega^1_{Z }(\log D)$, and $\Omega^p_{Z }(\log D) = \mathcal{O}_Z$ when $p = 0$. 
If $G$ acts algebraically on $Z$ and leaves $D$ invariant, then we obtain an isomorphism of $G$-representations:
\begin{equation}\label{e:Hodge}
F^p H^k Z^\circ/ F^{p + 1} H^k Z^\circ   \cong H^{k - p}(Z, \Omega^{p}_{Z}(\log D) ).
\end{equation}

\excise{
%%%%%%%%%%%%%%%%%%%%%%%%%%%%%%%%%%%%%%%%%%%%%%%
%%%%%%%%%%%%%%%%%%%%%%%%%%%%%%%%%%%%%%%%%%%%%%%
%%%%%%%%%%%%%%%%%%%%%%%%%%%%%%%%%%%%%%%%%%%%%%%
%%%%%%%%%%%%%%%%%%%%%%%%%%%%%%%%%%%%%%%%%%%%%%%
%%%%%%%%%%%%%%%%%%%%%%%%%%%%%%%%%%%%%%%%%%%%%%%
%\section{Representation theory of finite groups}\label{s:representation}
\section{Equivariant Ehrhart theory}\label{s:representation}
%%%%%%%%%%%%%%%%%%%%%%%%%%%%%%%%%%%%%%%%%%%%%%%
%%%%%%%%%%%%%%%%%%%%%%%%%%%%%%%%%%%%%%%%%%%%%%%
%%%%%%%%%%%%%%%%%%%%%%%%%%%%%%%%%%%%%%%%%%%%%%%
%%%%%%%%%%%%%%%%%%%%%%%%%%%%%%%%%%%%%%%%%%%%%%%

In this section, we recall some definitions and  results from \cite{YoEquivariant}. We refer the reader to Section~3 in \cite{YoEquivariant} for some basic facts on representation theory. Throughout, we will often identify a (complex) representation with its corresponding character.

%some basic facts about the representation theory of finite groups over the complex numbers. We refer the reader to \cite{FHRepresentation} and \cite{IsaCharacter} for an introduction to the subject and proofs of the statements below. 

We continue with the notation of Section~\ref{s2}, and let $P$ be a $d$-dimensional, $G$-invariant lattice polytope in a lattice $M$ with representation $\rho: G \rightarrow GL(M)$. If

 If $\bigwedge^m V$ and $\Sym^m V$ denote the exterior and symmetric powers of $V$ respectively, then we have the following (well-known) equality in $R(G)[[t]]$:

\begin{lemma}\label{l:exterior}
Let $G$ be a finite group and let $V$ be an $r$-dimensional representation. Then 
\begin{equation*}%\label{e:exterior}
\sum_{m \ge 0} \Sym^m V t^m =  \frac{ 1}{1 - Vt + \wedge^2 V t^2 - \cdots + (-1)^r \wedge^r V t^r}.
\end{equation*}
Moreover, if an element $g \in G$ acts on $V$ via a matrix $M$ and if $I$ denotes the identity $r \times r$ matrix, then both sides equal $\det(I - tM)$ when evaluated at $g$ (i.e. under the ring homomorphism
%under the isomorphism 
$\tr(g; \; ) : R(G)[[t]] \cong \C_{\class}(G)[[t]] \rightarrow \C[[t]]$). 
\end{lemma}
\begin{proof}
The following simple proof was related to me by John Stembridge. If an element $g \in G$ acts of $V$, then, since $g$ has finite order, we may assume, after a change of basis, that $g$ acts via a diagonal matrix
 $(\lambda_1,\ldots, \lambda_r)$. Then both sides of  the equation equal $\frac{1}{(1 - \lambda_1 t)\cdots(1 - \lambda_rt)}$ when evaluated at $g$. 
 % If we view \eqref{e:exterior} as an equality in the ring $\C_{\class}(G)[[t]]$, then both sides of \eqref{e:exterior} equal $\frac{1}{(1 - \lambda_1 t)\cdots(1 - \lambda_rt)}$ when evaluated at $g$. 
  %The result follows since the
 %character of both sides of the equation (i.e. the corresponding elements in $\C_{\class}(G)[[t]]$ evaluated at $g$ is 
 %$\frac{1}{(1 - \lambda_1 t)\cdots(1 - \lambda_rt)}$. 
\end{proof}

We will also need the following lemma. 
\begin{lemma}\label{l:dual}
Suppose $G$ acts linearly on a lattice $N$ of rank $r$. Then we have isomorphisms of 
$G$-representations $\bigwedge^i N_\C \otimes \bigwedge^r N_\C \cong \bigwedge^{r - i} N_\C$.   
\end{lemma}
\begin{proof}
As in the previous proof, we may assume that $g \in G$ acts on $N_\C$ via a diagonal matrix
 $(\lambda_1,\ldots, \lambda_r)$, for some roots of unity $\lambda_i$. Note that $\lambda_i^{-1} = \overline{\lambda}_i$, and  
%Note that  %every element 
$g$ acts on $\bigwedge^r N_\C$ via multiplication by $\pm 1$, and hence 
$(\lambda_1\cdots \lambda_r)^2 = 1$. 
%In particular, $(\bigwedge^r N_\C)^2$ is the trivial representation. 
%Note that $(\lambda_1\cdots \lambda_r)^2 = 1$  Then 
We conclude that the left hand side evaluated at $g$ is equal to 
 \[
 \lambda_1\cdots \lambda_r \sum_{k_1 < \cdots < k_i} \lambda_{k_1}\cdots \lambda_{k_i} = 
 %\sum_{k'_1 < \cdots < k'_{ r- i}} \lambda_{k'_1}^{-1} \cdots \lambda_{k'_{r - i}}^{-1} = 
  \sum_{k'_1 < \cdots < k'_{ r- i}} \overline{\lambda}_{k'_1} \cdots \overline{\lambda}_{k'_{r - i}}
  =   \sum_{k'_1 < \cdots < k'_{ r- i}} \lambda_{k'_1} \cdots \lambda_{k'_{r - i}}. 
   \]
\end{proof}

\excise{

\begin{example}[The symmetric group]\label{e:symmetric}
Let  $G = \Sym_n$ be the symmetric group on $n$ letters, and let $V_{\st} \cong \C^n$ denote the standard representation. 
The irreducible representations $V_\lambda$ of $G$ and their corresponding characters $\chi_\lambda$ are indexed by partitions $\lambda$ of $n$. 
 For example, the partition $(n)$ corresponds to the trivial representation, the partition $(1,1,\ldots, 1)$ corresponds to the sign representation, and the partition $(n - 1, 1)$ corresponds the quotient $V_{\st}/\C(1,\ldots, 1)$.  More generally, %the partition 
 $V_{(r,1 ,\ldots, 1)} \cong \bigwedge^{n - r} V_{n - 1, 1}$ for $1 \le r \le n$. 
\end{example}
}

}

%%%%%%%%%%%%%%%%%%%%%%%%%%%%%%%%%%%%%%%%%%%%%%%
%%%%%%%%%%%%%%%%%%%%%%%%%%%%%%%%%%%%%%%%%%%%%%%
%%%%%%%%%%%%%%%%%%%%%%%%%%%%%%%%%%%%%%%%%%%%%%%
%%%%%%%%%%%%%%%%%%%%%%%%%%%%%%%%%%%%%%%%%%%%%%%
%%%%%%%%%%%%%%%%%%%%%%%%%%%%%%%%%%%%%%%%%%%%%%%
\section{Equivariant Hodge-Deligne polynomials}\label{s:HodgeDeligne}
%%%%%%%%%%%%%%%%%%%%%%%%%%%%%%%%%%%%%%%%%%%%%%%
%%%%%%%%%%%%%%%%%%%%%%%%%%%%%%%%%%%%%%%%%%%%%%%
%%%%%%%%%%%%%%%%%%%%%%%%%%%%%%%%%%%%%%%%%%%%%%%
%%%%%%%%%%%%%%%%%%%%%%%%%%%%%%%%%%%%%%%%%%%%%%%

In this section, we introduce the equivariant Hodge-Deligne polynomial of a complex variety with group action. This is a slight generalization of the notion of weight polynomial considered by Dimca and Lehrer in \cite{DLPurity}, and  the notion of   \emph{equivariant $\chi_y$-genus} considered by Cappell, Maxim and Shaneson in \cite{CMSEquivariant}.

%briefly recall the definition and some basic properties of the equivariant Hodge-Deligne polynomial of a complex variety with a group action. % and some of its basic properties. 

Let $G$ be a finite group acting algebraically on a  $d$-dimensional complex variety $Z$. A famous result of Deligne states that the cohomology of $Z$ carries a \emph{mixed Hodge structure}. In particular, 
the $k^{\textrm{th}}$ cohomology group $H^k_c Z = H^k_c (Z; \C)$ of $Z$ with compact support has an increasing \emph{weight filtration}
\[
0 \subseteq W_{0} H^k_c Z \subseteq W_1 H^k_c Z \subseteq \cdots \subseteq W_{k} H^k_c  Z = H^k_c Z
\]
and a decreasing Hodge filtration 
\[
H^k_c Z = F^0 H^k_c Z \supseteq \cdots \supseteq F^{d}  H^k_c Z \supseteq 0
\]
%\comment{check the length of the filtration?}
which induces a pure Hodge structure of weight $m$ on 
\[
Gr_m^W  H^k_c Z = W_{m} H^k_c Z / W_{m  - 1} H^k_c Z.
\]
%Let $H^{p,q}( H^k_c Z)$ denote the $(p,q)^{\textrm{th}}$ piece of $Gr_m^W  H^k_c Z$ for $p + q \le k$. 
 The action of $G$ preserves the mixed Hodge structure and hence we have %an 
 induced $G$-representations %of $G$ 
 on 
$H^{p,q}( H^k_c Z)$,  the $(p,q)^{\textrm{th}}$ piece of $Gr_{p + q}^W  H^k_c Z$, for $p + q \le k$. If $R(G)$ denotes the representation ring of $G$, then we may consider the (virtual) representation 
\[
e^{p,q}_G (Z)  := \sum_{k = 0}^{p + q} (-1)^k H^{p,q}( H^k_c Z) \in R(G). 
\]
%for any $p + q \le k$. 
\begin{definition}
If a finite group $G$ acts algebraically on a complex variety $Z$, then 
the \define{equivariant Hodge-Deligne polynomial} is 
\[
E_G (Z) = E_G(Z;u,v) = \sum_{p,q} e^{p,q}_G (Z) u^p v^q \in R(G)[u,v]. 
\]
\end{definition}

\begin{remark}\label{r:symmetry}
Since the action of $G$ on  $H^k_c Z = H^k_c (Z; \C)$ is induced by the action of $G$ on $H^k_c (Z; \Z)$,  it follows that complex conjugation  commutes with the $G$-action on $H^k_c (Z; \C)$. Hence we have an isomorphism of $G$-representations $H^{p,q}( H^k_c Z) \cong \overline{H^{p,q}}( H^k_c Z) = 
H^{q,p}( H^k_c Z)$.  In particular, $e^{p,q}_G (Z) = e^{q,p}_G (Z)$, and $E_G (Z)$ is symmetric in $u$ and $v$.  
\end{remark}

%\begin{remark}\label{r:vanishing}
%Setting $u = v = t$, one obtains the \emph{weight polynomial} of $Z$ introduced in \cite{DLPurity}, and setting $u = v = 1$, one obtains the \emph{equivariant Euler characteristic} of $Z$.  If $G$ is trivial, then we recover the classical Hodge-Deligne polynomial. 
%\end{remark}

If $U$ is a $G$-invariant open subset of $Z$ and $V = X \setminus U$, then the long exact sequence of cohomology with compact support
\[
\cdots  \rightarrow  H^{k - 1}_c V \rightarrow H^k_c U \rightarrow H^k_c X \rightarrow H^k_c V \rightarrow H^{k + 1}_c U \rightarrow \cdots 
\]
consists of morphisms of mixed Hodge structures. In particular, it follows that the equivariant Hodge-Deligne polynomial satisfies the following \define{additivity property}: 
\[
E_G(Z)  = E_G(U)  + E_G(V)  \in R(G)[u,v]. 
\]
If $G$ acts algebraically on varieties $V$ and $V'$, then $G$ acts algebraically on $V \times V'$, and, since the K\"unneth isomorphism respects mixed Hodge structures and the action of $G$, the equivariant Hodge-Deligne polynomial satisfies the following \define{multiplicative property}: 
%(cf. \cite[Theorem 6.1]{DLPurity}):
\[
E_G(V \times V')  = E_G(V)E_G(V')  \in R(G)[u,v]. 
\]

\begin{example}\label{e:smooth}
If $Z$ is a complete variety of dimension $r$ with at worst quotient singularities, then $H^k_c Z = H^k Z$ admits a pure Hodge structure of weight $k$ i.e. $W_{k - 1} H^k_c Z  = 0$. In this case, $E_G(Z) = \sum_{p,q} (-1)^{p + q} H^{p,q}(Z)  u^p v^q$ encodes the representations of $G$ on the $(p,q)$-pieces of the cohomology of $Z$. Moreover, Poincar\'e duality induces an isomorphism of $G$-representations $H^{p,q}(Z) \cong H^{r - p, r - q}(Z)$, and hence $E_G(Z; u,v) = (uv)^r   E_G(Z; u^{-1},v^{-1})$ \cite{FujDuality} (cf. \cite[1.6]{DLPurity}). If $Z$ is projective, then successive capping with a hyperplane class gives an explicit isomorphism of $G$-representations $H^{p,q}(Z) \cong H^{r - q, r - p}(Z)$ \cite[p. 64]{StaNumber}. %If $Z$ has at worst quotient singularities, then these statements continue to hold. 
% \comment{check: use Stanley's fact that capping with hyperplane class commutes with action of $G$}  \comment{justify that $G$ acts over $\Z$ implies that action of $G$ commutes with complex conjugation?}
\end{example}

\begin{example}\label{e:tori}
If $G$ acts linearly on a lattice $M$ of rank $d$ via $\rho: G \rightarrow GL(M)$, then $G$ acts algebraically on the corresponding torus 
$T = \Spec \C[M]$, and we have canonical isomorphisms of $G$-representations $H^{ d + k}_c T = H^{k,k}(H^{ d + k}_c T) \cong \wedge^{d - k} \rho$. In particular, $E_G(T) = \sum_{k = 0}^{d} (-1)^{d + k}  \bigwedge^{d - k} \rho \:  (uv)^k$ (cf. proof of Theorem 1.1 in \cite{LehRational}). 
\end{example}

If $H$ is a subgroup of $G$ acting on a variety $Z$, then we write $\Ind_{H}^G E_H(Z)  = \Ind_{H}^G E_H(Z; u,v) = \sum_{p,q} \Ind_H^G e_H^{p,q} u^p v^q$ for the polynomial of induced (virtual) representations in $R(G)[u,v]$.   We will need the following proposition.

\begin{proposition}\cite[Proposition 2.3]{LehRational}\label{p:induced}
Suppose a finite group $G$ acts a complex variety $Z$, and $Z$ admits a decomposition into locally closed subvarieties $Z = \coprod_{i \in I} Z_i$ which are permuted by $G$.  Then 
\[
E_G (Z) = \sum_{\iota \in I /G} \Ind_{G_i}^G E_{G_i}(Z_i),
\]
where $I/G$ denotes the set of orbits of $G$ acting on $I$, $i$ denotes a representative of the orbit $\iota$, and $G_i$ denotes the isotropy group of $i$ in $I$. 
In terms of characters, for any $g$ in $G$, 
\[
E_G (Z)(g) = \sum_{g \cdot Z_i = Z_i} E_{G_i}(Z_i)(g).
\]
\end{proposition}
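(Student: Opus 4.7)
The plan is to reduce the statement to the two operations which the equivariant Hodge--Deligne polynomial controls: additivity under $G$-invariant open/closed decompositions, and the computation of $E_G$ of a disjoint union of translates of a single piece. First I would group the index set by $G$-orbits. For each orbit $\iota \in I/G$, set $Y_\iota = \coprod_{j \in G \cdot i} Z_j$; this is a $G$-invariant constructible subset of $Z$. A standard refinement argument (decomposing each $Z_j$ into its open--closed parts inside $Z$ and iterating the additivity property \eqref{i:1}, or, equivalently, invoking the weight spectral sequence attached to the stratification) gives
\[
E_G(Z) = \sum_{\iota \in I/G} E_G(Y_\iota).
\]

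The heart of the argument is to identify $E_G(Y_\iota) = \Ind_{G_i}^{G} E_{G_i}(Z_i)$ for a single orbit. Fix a representative $i$ of $\iota$, let $H = G_i$, and choose coset representatives $g_1, \dots, g_n$ of $G/H$, so that $Y_\iota = \coprod_{\ell=1}^n g_\ell \cdot Z_i$ as a disjoint union of locally closed subvarieties, each isomorphic to $Z_i$ via $g_\ell$. Cohomology with compact support turns this disjoint union into a direct sum, and since the translation maps $g_\ell : Z_i \to g_\ell Z_i$ are algebraic isomorphisms, they induce isomorphisms of mixed Hodge structures. Therefore as $G$-representations carrying a mixed Hodge structure,
\[
H^k_c(Y_\iota) \;\cong\; \bigoplus_{\ell=1}^n H^k_c(g_\ell Z_i) \;\cong\; \Ind_{H}^{G} H^k_c(Z_i),
\]
where the last isomorphism is the standard model of an induced representation in which $G$ permutes the summands via its action on the cosets $G/H$, and the $H$-action on a chosen summand is the one coming from $H = G_i$ acting on $Z_i$. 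This isomorphism is compatible with both the weight and Hodge filtrations (the filtrations decompose along the summands), so, taking alternating sums on the $(p,q)$-pieces of $\operatorname{Gr}^W$, we obtain $E_G(Y_\iota) = \Ind_{H}^{G} E_{H}(Z_i)$. Summing over orbits yields the first formula.

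For the character formula, I would apply the standard Frobenius identity
\[
\chi_{\Ind_{H}^{G}\phi}(g) \;=\; \sum_{xH \in G/H\,:\,g \cdot xH = xH} \phi(x^{-1} g x),
\]
to each $\Ind_{G_i}^{G} E_{G_i}(Z_i)(g)$. Under the identification of $G/G_i$ with the orbit of $i$, the fixed cosets correspond exactly to indices $j$ in the orbit with $g \cdot Z_j = Z_j$; for each such $j$, conjugation by the representative $x$ identifies $G_j$ with $G_i$ and makes $\phi(x^{-1} g x) = E_{G_j}(Z_j)(g)$. Summing over orbits collapses the double sum into $\sum_{g \cdot Z_i = Z_i} E_{G_i}(Z_i)(g)$, as required.

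The main obstacle I expect is purely bookkeeping rather than conceptual: verifying that the direct sum decomposition $H^k_c(Y_\iota) \cong \bigoplus_\ell H^k_c(g_\ell Z_i)$ is genuinely an isomorphism of induced mixed Hodge $G$-modules, and that the additivity property \eqref{i:1} extends from a single open/closed pair to an arbitrary $G$-stable constructible decomposition. Both points are standard consequences of Deligne's construction of the mixed Hodge structure on $H^*_c$, but they require one to be careful about how $G$ permutes the summands.
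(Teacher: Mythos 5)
The paper itself contains no proof of this proposition---it is quoted directly from Lehrer \cite{LehRational}---so there is no internal argument to compare against; your proposal supplies the standard proof, and it is correct. The two steps are exactly what one expects: reduce to a single $G$-orbit of strata by additivity, then identify $H^k_c$ of the disjoint union $\coprod_\ell g_\ell Z_i$ of translates with the induced mixed Hodge $G$-module $\Ind_{G_i}^G H^k_c(Z_i)$ (the filtrations split along the summands, so the identification passes to the $(p,q)$-pieces of $Gr^W$ and hence to $E_G$), with the character formula then being Frobenius's formula for induced characters.

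One point you flag but should actually write out, since the paper's additivity property is stated only for a $G$-invariant \emph{open} subvariety: the extension to a finite $G$-stable decomposition into locally closed pieces. The clean induction is on the number of orbit unions $Y_\iota$ (plus Noetherian induction): the closure $\overline{Y_\iota}$ of one orbit union is closed and $G$-invariant, $Y_\iota$ is open in $\overline{Y_\iota}$, so two applications of the stated additivity give $E_G(Z) = E_G(Y_\iota) + E_G(\overline{Y_\iota} \setminus Y_\iota) + E_G(Z \setminus \overline{Y_\iota})$, and the remaining pieces induce $G$-stable locally closed decompositions of the two smaller sets. Your parenthetical alternatives (``open--closed parts'' or a weight spectral sequence) gesture at this but are looser than the actual argument. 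A second minor bookkeeping item, which you correctly anticipate: cohomology is contravariant, so the permutation of the summands $H^k_c(g_\ell Z_i)$ matches the $G$-action on cosets only after fixing the convention (acting by $(g^{-1})^*$) that makes $H^k_c$ a left $G$-module; with that convention the identification with $\Ind_{G_i}^G H^k_c(Z_i)$ is exactly the standard model of induction.
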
 

\begin{example}\label{e:toric}
A  toric variety $X = X(\triangle)$ corresponding to a fan $\triangle$ is a disjoint union of tori $\{ T_\tau \mid \tau \in \triangle \}$ (see Section~\ref{toric}). 
% in bijection with the cones $\sigma$ of a fan $\triangle$. 
If a finite group $G$ acts on $X$ via toric morphisms, then $G$ permutes the tori $\{ T_\tau \mid \tau \in \triangle \}$, and hence one immediately deduces an expression for the equivariant Hodge-Deligne polynomial $E_G(X)$ from Example~\ref{e:tori} and Proposition~\ref{p:induced} (cf. Theorem 1.1 in \cite{LehRational}). 
\end{example}

%%%%%%%%%%%%%%%%%%%%%%%%%%%%%%%%%%%%%%%%%%%%%%%
%%%%%%%%%%%%%%%%%%%%%%%%%%%%%%%%%%%%%%%%%%%%%%%
%%%%%%%%%%%%%%%%%%%%%%%%%%%%%%%%%%%%%%%%%%%%%%%
%%%%%%%%%%%%%%%%%%%%%%%%%%%%%%%%%%%%%%%%%%%%%%%
%%%%%%%%%%%%%%%%%%%%%%%%%%%%%%%%%%%%%%%%%%%%%%%
\section{Equivariant Hodge-Deligne polynomials of hypersurfaces in tori}\label{s:ehd}
%%%%%%%%%%%%%%%%%%%%%%%%%%%%%%%%%%%%%%%%%%%%%%%
%%%%%%%%%%%%%%%%%%%%%%%%%%%%%%%%%%%%%%%%%%%%%%%
%%%%%%%%%%%%%%%%%%%%%%%%%%%%%%%%%%%%%%%%%%%%%%%
%%%%%%%%%%%%%%%%%%%%%%%%%%%%%%%%%%%%%%%%%%%%%%%

In this section, we present an algorithm to determine the equivariant Hodge-Deligne polynomial of a $G$-invariant, non-degenerate hypersurface $X^\circ$ in a torus. Equivalently, we determine the representations of $G$ on the pieces of the mixed Hodge structure on $H^k_c X^\circ$ (Remark~\ref{r:Hodge-Deligne}).  This result and its proof may be viewed as an equivariant analogue of Danilov and Khovanski{\u\i}'s work in  \cite{DKAlgorithm}.

%We recall the setup from Section~\ref{s2}. 
We continue with the notation from Section~\ref{s2}. 
That is, $G$ is a finite group acting linearly on a lattice $M$ of rank $d$ via $\rho: G \rightarrow GL(M)$, and $P$ is a $d$-dimensional lattice polytope in $M$ which is $G$-invariant `up to translation'. % (see Remark~\ref{r:reduction}). 
Let $X^\circ \subseteq T = \Spec \C[M]$ be a $G$-invariant, non-degenerate hypersurface with Newton polytope $P$, and let $X$ denote the closure of $X^\circ$ in the projective toric variety $Y$ corresponding to the normal fan of $P$. %We will use the following lemma. 
\begin{lemma}\label{l:affine}
$H^k_c X^\circ = 0$ for $k < d - 1$. 
\end{lemma}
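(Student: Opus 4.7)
The plan is to exploit the fact that $X^\circ$ is a smooth affine variety of complex dimension $d-1$, and combine the Andreotti--Frankel theorem with Poincaré duality on the smooth manifold $X^\circ$.

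First I would observe that since $X^\circ$ is non-degenerate with respect to $P$, its intersection with the open torus $T$ itself is a smooth hypersurface in $T$ (apply Definition~\ref{d:nondegenerate} or Remark~\ref{r:induction} to the top-dimensional torus orbit, which corresponds to the trivial cone). So $X^\circ$ is a smooth complex variety of dimension $d-1$, and as a closed subvariety of the affine variety $T = \Spec \C[M]$ it is itself affine.

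Next I would invoke the Andreotti--Frankel theorem: a smooth affine variety of complex dimension $d-1$ has the homotopy type of a finite CW complex of real dimension at most $d-1$. Consequently $H^k(X^\circ; \C) = 0$ for all $k > d-1$. Since $X^\circ$ is a smooth oriented real manifold of real dimension $2(d-1)$, Poincaré duality gives a (non-equivariant) isomorphism
\[
H^k_c(X^\circ;\C) \;\cong\; H^{2(d-1)-k}(X^\circ;\C)^{\vee}.
\]
The right-hand side vanishes whenever $2(d-1)-k > d-1$, that is, whenever $k < d-1$. This gives the claimed vanishing.

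The argument is essentially just combining two standard facts, and there is no real obstacle: the only point to check carefully is that $X^\circ$ really is smooth (which is where non-degeneracy is used) and affine (which is immediate, being closed in $T$). No equivariance is required in the statement, since it concerns only the underlying vector space, so we do not need to worry about Hodge or weight filtrations at this stage. This lemma will serve as the base case for the inductive computation of $E_G(X^\circ;u,v)$ in the subsequent sections.
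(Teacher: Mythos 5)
Your proof is correct and follows essentially the same route as the paper: the paper's proof likewise observes that $X^\circ$ is a smooth, affine, $(d-1)$-dimensional variety and cites the Andreotti--Frankel theorem. You have merely made explicit the Poincar\'e duality step converting the vanishing of ordinary cohomology above the middle dimension into the vanishing of $H^k_c$ below it, which the paper leaves implicit in its citation.
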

\begin{proof}
Since $X^\circ$ is a smooth, affine, $(d - 1)$-dimensional variety, this follows from a classical result of Andreotti and Frankel \cite[Theorem 1]{AFLefschetz}. 
\end{proof}

\subsection{Step 1}\label{step1}
We have the following Lefschetz type result due to Danilov and Khovanski{\u\i}.

\begin{proposition}\cite[Proposition 3.9]{DKAlgorithm}\label{p:Gysin}
The Gysin map $H^k_c X^\circ \rightarrow H^{k + 2}_c T$ is an isomorphism for $k > d - 1$, and a surjection for  $k = d - 1$. 
\end{proposition}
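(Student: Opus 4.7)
The plan is to apply Poincar\'e duality to translate the Gysin statement into the affine Lefschetz hyperplane theorem for the pair $(T, X^\circ)$, and then establish the latter via the long exact sequence of the pair combined with Artin vanishing on the smooth affine complement.

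First, since $X^\circ$ and $T$ are smooth of complex dimensions $d-1$ and $d$, Poincar\'e duality gives natural isomorphisms $H^k_c X^\circ \cong H^{2d-2-k}(X^\circ)^\vee$ and $H^{k+2}_c T \cong H^{2d-k-2}(T)^\vee$, and the projection formula identifies the Gysin map with the transpose of the pullback $i^* \colon H^m(T) \to H^m(X^\circ)$ along the closed immersion $i \colon X^\circ \hookrightarrow T$, where $m := 2d-k-2$. The condition $k > d-1$ then translates to $m < d-1$, and $k = d-1$ to $m = d-1$, so it suffices to prove that $i^*$ is an isomorphism for $m < d-1$ and injective for $m = d-1$.

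To establish this, I would use the long exact sequence of the pair,
\[
\cdots \to H^m(T, X^\circ) \to H^m(T) \xrightarrow{i^*} H^m(X^\circ) \to H^{m+1}(T, X^\circ) \to \cdots,
\]
and show that $H^m(T, X^\circ) = 0$ for $m \le d-1$. Since $T$ is affine and $X^\circ$ is a hypersurface, the complement $U := T \setminus X^\circ$ is smooth affine of complex dimension $d$. By the Andreotti--Frankel theorem, $U$ has the homotopy type of a CW complex of real dimension at most $d$; a relative version of this, due to Hamm and L\^e, or provable by stratified Morse theory applied to a defining function for $X^\circ$, shows that the pair $(T, X^\circ)$ is $(d-1)$-connected, yielding the required vanishing. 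The main obstacle is this relative affine Lefschetz vanishing; once it is in place, the long exact sequence argument and the Poincar\'e duality translation are essentially formal.
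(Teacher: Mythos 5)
Your argument is correct, but note that the paper itself offers no proof to compare against: Proposition~\ref{p:Gysin} is quoted directly from Danilov and Khovanski{\u\i} \cite[Proposition~3.9]{DKAlgorithm}, so the paper's ``proof'' is that citation. Your reduction is the standard route and it is sound: Poincar\'e duality on the smooth varieties $X^\circ$ and $T$ identifies the compactly supported Gysin map with the transpose of the restriction $i^*\colon H^m(T) \to H^m(X^\circ)$ with $m = 2d-2-k$, and the long exact sequence of the pair reduces everything to the vanishing $H^m(T, X^\circ;\C) = 0$ for $m \le d-1$, which is precisely the relative affine (Stein) Lefschetz theorem of Hamm--L\^e applied to the Stein manifold $T$ and the zero locus $X^\circ$ of a single regular function; connectivity of the pair gives the cohomological vanishing over $\C$ by relative Hurewicz and universal coefficients. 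One point deserves emphasis, and you navigated it correctly: Andreotti--Frankel applied to the complement $U = T \setminus X^\circ$ is \emph{not} sufficient by itself. The vanishing $H^m(U) = 0$ for $m > d$, fed into the residue/Gysin sequence of the open complement or dualized, controls the Gysin map in \emph{ordinary} cohomology $H^j(X^\circ) \to H^{j+2}(T)$ (equivalently, compactly supported restriction $H^m_c(T)\to H^m_c(X^\circ)$), whereas the proposition concerns the compactly supported Gysin map, which is dual to restriction in ordinary cohomology and whose obstruction groups are the relative groups $H^*(T,X^\circ)$; the two statements are Poincar\'e dual to different maps, and neither implies the other formally. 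Hence the pair-connectivity theorem (Hamm's theorem that a Stein manifold of dimension $d$ is obtained, up to homotopy, from the zero set of a holomorphic function by attaching cells of dimension at least $d$) is genuinely the crux, exactly as you flag; granting that citation, your duality translation and long exact sequence steps complete the proof.
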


The isomorphism in the above lemma is a morphism of mixed Hodge structures of type $(1,1)$ which is equivariant with respect to $G$. 
Since $H^{p,q}(H^k_c X^\circ) = 0$ for $p + q > k$, we conclude, using Lemma~\ref{e:tori}, that if $p + q > d - 1$, then
\begin{equation}\label{e:Gysin}
e^{p,q}_G(X^\circ) = e^{p + 1, q + 1}_G(T) %= %(-1)^{n + p - 1} H^{p,q}(H^{n + p - 1}_c X^\circ) \]
%\[ 
=  \left\{\begin{array}{cl} %H^{p+ 1, p + 1}(H_c^{n + p + 1} T) = 
(-1)^{d - 1 - p} \bigwedge^{d - 1 - p} \rho & \text{if } p = q ; \\ 0 & \text{otherwise}. \end{array}\right. 
\end{equation}

Combined with Lemma~\ref{l:affine} and Example~\ref{e:tori}, we conclude that we understand the representations $H^k_c X^\circ$ for $k \neq d - 1$. Moreover, if we set
\[
H^{d - 1}_{c,\prim} X^\circ := \ker[H^{d - 1}_c X^\circ \rightarrow H^{d + 1}_c T],
\]
then $H^{d - 1}_{c,\prim} X^\circ$ inherits a mixed Hodge structure, compatible with the action of $G$, and we have an isomorphism of $G$-representations $H^{d - 1}_c X^\circ \cong H^{d - 1}_{c,\prim} X^\circ \oplus H^{d + 1}_c T$. Hence it remains to understand the action of $G$ on the mixed Hodge structure of $H^{d - 1}_{c,\prim} X^\circ$.  

\begin{remark}\label{r:Hodge-Deligne}
It follows from the above discussion that %Lemma~\ref{l:affine} and Proposition~\ref{p:Gysin} that 
the equivariant Hodge-Deligne polynomial $E_G(X^\circ)$ determines the $G$-representations  $H^{p,q}( H^k_c X^\circ)$. 
\end{remark}

\subsection{Step 2}\label{step2}

With the notation of Section~\ref{toric}, let  $\P = \P(\Sigma)$  be a %smooth, 
complete %projective 
toric variety with at worst quotient singularities and with a  $G$-action via toric morphisms, admitting a $G$-invariant, proper, birational morphism $f: \P \rightarrow Y = Y(\triangle)$. If $X'$ denotes the 
%(smooth, 
 closure of $X^\circ$ in $\P$, then  $X'$ is $G$-invariant and has at worst quotient singularities. 
By Proposition~\ref{p:induced}, 
\[
E_G (X'; u,v) = \sum_{[\tau'] \in \Sigma /G} \Ind_{G_{\tau'}}^G E_{G_{\tau'}}(X' \cap T_{\tau'}),
\]
where  $\Sigma/G$ denotes the set of orbits of $G$ acting on the cones in $\Sigma$, $\tau'$ denotes a representative of an orbit, and $G_{\tau'}$ denotes the isotropy group of $\tau'$. 
For every cone $\tau'$ in $\Sigma$, let $\tau = \tau_Q$ denote the smallest cone in the normal fan $\triangle$ containing $\tau'$, for some face $Q$ of $P$, and write $f(\tau') = Q$. Since $G_{\tau'}$ is a subgroup of the isotropy group of $Q$ in $P$, it follows from \eqref{e:resolution} and the multiplicative property of equivariant Hodge-Deligne polynomials that 
\[
E_G (X'; u,v) = \sum_{[\tau'] \in \Sigma /G} \Ind_{G_{\tau'}}^G [E_{G_{\tau'}}(X \cap T_{f(\tau')})E_{G_{\tau'}}( T_{\tau',f})],
\]
where $T_{\tau',f} := \Spec \C[M_{\tau'}/M_{\tau}]$. We conclude, using Remark~\ref{r:induction} and  Example~\ref{e:tori}, that
\[
E_G (X'; u,v) = E_G(X^\circ;u,v) + \alpha(u,v),
\]
where $\alpha(u,v) \in R(G)[u,v]$ is known by induction on dimension. Since $X'$ is smooth and complete,  Example~\ref{e:smooth} implies that  
$E_G (X'; u,v) = (uv)^{d - 1} E_G (X'; u^{-1},v^{-1})$, and hence we know the difference 
$E_G(X^\circ;u,v) - (uv)^{d - 1}E_G(X^\circ;u^{-1},v^{-1})$.  By Step~$1$, we know $e^{p,q}_G(X^\circ)$ for $p + q > d - 1$, and hence we deduce $e^{p,q}_G(X^\circ)$ for $p + q < d - 1$. 

\subsection{Step 3}\label{step3}

It remains to determine $e^{p,q}_G(X^\circ)$ for $p + q = d - 1$. Clearly, it will be enough to compute the sums  $\sum_q e_G^{p,q}(X^\circ)$, or, equivalently, the polynomial $E_G(X^\circ; u , 1)$. 
Using the fact that Poincar\'e duality preserves the mixed Hodge structure \cite{FujDuality} (cf. \cite[1.6]{DLPurity}), we have
\begin{align*}
\sum_q e^{p,q}_G(X^\circ) &=  \sum_q \sum_k (-1)^k H^{p,q}(H^k_c X^\circ)  \\
&= 
\sum_q \sum_k (-1)^k H^{d - 1 - p, d - 1 - q}(H^{2d - 2 - k}X^\circ) \\
&= \sum_q \sum_k (-1)^k H^{d - 1 - p, q}(H^{k}X^\circ) \\
&= \sum_k (-1)^k F^{d - 1 - p} H^k X^\circ/ F^{d - p} H^k X^\circ.  
\end{align*}
%Hence we are left with computing the sums $\sum_k (-1)^k F^{p} H^k (Z;\C)/ F^{p + 1} H^k (Z;\C)$. 
We continue with the notation of Step 2, % (Subsection~\ref{step2}), 
and  
let $X'$ denote the (smooth, $G$-invariant) compactification of $X^\circ$ in $\P = \P(\Sigma)$. 
%We may and will assume that $\overline{Z}'$ is smooth. 
Let $D = D_1 + \cdots + D_r$ denote the union of the $T$-invariant divisors of $\P$ and let
$D_{X'} = D_1 \cap X' + \cdots + D_r \cap X'$. Our assumption that $X^\circ$ is non-degenerate with respect to $P$ implies that $D$ and $D_{X'}$ are simple normal crossings divisors in $\P$ and $X'$ respectively. 
%Recall from Section~\ref{toric} that 
%The sheaf $\Omega_{X' }^p(\log D_{X'})$ of %holomorphic
%rational $p$-forms on $X'$ with log poles along $D_{X'}$ is preserved by the action of $G$   ........
%\begin{lemma}
%$F^p H^k (X^\circ;\C)/ F^{p + 1} H^k (X^\circ;\C)   \cong H^{k - p}(X', \Omega^p_{X' }(\log D_{X'}) )$.
%\end{lemma}
It follows from \eqref{e:Hodge} that we need to compute the virtual representation
\begin{equation}\label{e:stepA}
\sum_k (-1)^k F^{d - 1 - p} H^k X^\circ/ F^{d - p} H^k X^\circ = (-1)^{d - 1 - p} \chi(X', \Omega^{d - 1 - p}_{X' }(\log D_{X'}) ). 
\end{equation}
%Where is the $(-1)^p$ coming from? Need  a better reference.

One verifies that we have exact sequences of $G$-equivariant sheaves
\[
0 \rightarrow \Omega^{\bullet - 1}_{X'}(\log D_{X'}) \otimes \mathcal{O}_{\P}(-X')|_{X'}  \rightarrow \Omega^{\bullet}_{\P}(\log D)|_{X'} \rightarrow \Omega^{\bullet}_{X'}(\log D_{X'})  \rightarrow 0  
\]

%Note: should have $ \Omega^{0}_{\overline{Z}'}(\log D_Z') = \mathcal{O}_{\overline{Z}'}$, so that when $p = 0$ get $\mathcal{O}_{\overline{Z}'} \cong \mathcal{O}_{\overline{Z}'}$, and when $p = 1$ get 
%\[
%0 \rightarrow \mathcal{O}_{X'}(-\overline{Z}')|_{\overline{Z}'}  \rightarrow \Omega_{X'}(\log D')|_{\overline{Z}'} \rightarrow \Omega_{\overline{Z}'}(\log D_Z')  \rightarrow 0  
%\]

Taking Euler characteristics and twists by $\mathcal{O}_{\P}(kX')$ gives
\[
 \chi(X', \Omega^{d - 1 - p}_{X' }(\log D_{X'}) ) = \sum_{k = 0}^p (-1)^k \chi( X' , \Omega^{d - p + k}_{\P}(\log D) \otimes \mathcal{O}_{\P}( (k + 1) X')|_{X'}).
\]
From the exact sequence 
\[
0 \rightarrow \mathcal{O}_{\P}(-X') \rightarrow \mathcal{O}_{\P} \rightarrow \mathcal{O}_{X'} \rightarrow 0
\]
we obtain the following expression for $\chi(X', \Omega^{d - 1 - p}_{X' }(\log D_{X'}) ) $,
\[
\sum_{k = 0}^p  (-1)^k [ 
 \chi( \P , \Omega^{d - p + k}_{\P}(\log D) \otimes \mathcal{O}_{\P}( (k + 1) X')) - 
 \chi( \P , \Omega^{d - p + k}_{\P}(\log D) \otimes \mathcal{O}_{\P}( k X')) ].
\]
Rearranging gives
\[
-\chi(X', \Omega^{d - 1 - p}_{X' }(\log D_{X'}) )  = \chi(\P,  \Omega^{d - p}_{\P} (\log D)) \]
\[ + \sum_{k = 1}^{p + 1} (-1)^{k}
  [\chi(\P,  \Omega^{d - 1 - p + k}_{\P}(\log D) \otimes \mathcal{O}_{\P} ( k  X' )) + 
\chi(\P,  \Omega^{d - p + k}_{\P}(\log D) \otimes \mathcal{O}_{\P} ( k  X' ))  ].
\]
We need the following well-known lemma. Under the isomorphism below,  $u \in M$ corresponds to $d\chi^u/\chi^u \in \Omega_{\P}(\log D)$.  

\begin{lemma}\cite[Section~5]{BCOn}\label{l:logpoles}
There is a natural $G$-equivariant isomorphism 
\[
\Omega^{k}_{\P}(\log D) \cong \bigwedge^k M \otimes_\Z \mathcal{O}_{\P}. \] 
\end{lemma}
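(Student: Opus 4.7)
The plan is to reduce to the case $k=1$ and then take exterior powers. Explicitly, I would construct the map
\[
\phi_1 : M \otimes_\Z \mathcal{O}_\P \longrightarrow \Omega^1_\P(\log D), \qquad u \otimes 1 \longmapsto \frac{d\chi^u}{\chi^u},
\]
extended $\mathcal{O}_\P$-linearly in the second factor, and then set $\phi_k = \bigwedge^k \phi_1$. The $\Z$-linearity of $\phi_1$ in $u$ follows from the Leibniz rule, since $d\chi^{u+v}/\chi^{u+v} = (\chi^v d\chi^u + \chi^u d\chi^v)/(\chi^u \chi^v) = d\chi^u/\chi^u + d\chi^v/\chi^v$. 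Both the source and target of $\phi_1$ are coherent sheaves of rank $d$, so it will suffice to exhibit local trivializations on an open covering.

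The key local computation takes place on an affine chart $U_\sigma$ corresponding to a smooth cone $\sigma \in \Sigma$. Let $v_1, \ldots, v_r$ be the primitive ray generators of $\sigma$; these extend to a $\Z$-basis of $N$, and the dual basis $e_1, \ldots, e_d$ of $M$ has the property that $\chi^{e_i}$ is a local equation for $D_{v_i} \cap U_\sigma$ when $i \le r$, while $\chi^{e_i}$ is a unit on $U_\sigma$ when $i > r$. By the standard local description of log differentials along a simple normal crossings divisor, the forms $d\chi^{e_i}/\chi^{e_i}$ form a frame for $\Omega^1_{U_\sigma}(\log D|_{U_\sigma})$, and by $\Z$-linearity $\phi_1$ sends the basis $\{e_i\}$ of $M$ to this frame. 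Hence $\phi_1$ is an isomorphism over the smooth locus $\P^{\text{sm}}$ of $\P$.

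When $\P$ has quotient singularities, $\Omega^k_\P(\log D)$ should be understood as the sheaf of Zariski log differentials, equivalently the reflexive hull of its restriction to $\P^{\text{sm}}$. Because $\bigwedge^k M \otimes_\Z \mathcal{O}_\P$ is already locally free and hence reflexive, and the singular locus of $\P$ has codimension at least $2$, the isomorphism on $\P^{\text{sm}}$ extends uniquely across $\P$. The $G$-equivariance of $\phi_1$ is automatic: since $G$ acts on $\P$ via toric morphisms, the pullback of characters is controlled by the action of $G$ on $M$, and pullback commutes with the exterior derivative. I expect the main technical point to be this extension across the non-smooth locus via reflexivity; in the smooth case, which is essentially what Step~2 reduces to after taking an equivariant toric resolution, the local computation alone suffices.
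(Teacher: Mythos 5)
Your proposal is correct, and it is essentially the argument the paper is implicitly relying on: the paper gives no proof of this lemma at all, quoting it from Batyrev--Cox \cite{BCOn}, and the sentence immediately preceding the statement (``$u \in M$ corresponds to $d\chi^u/\chi^u \in \Omega_{\P}(\log D)$'') names exactly the dlog map $\phi_1$ you construct. Your chart-by-chart computation --- dual basis $e_1,\ldots,e_d$ on a smooth chart $U_\sigma$, with $\chi^{e_i}$ a local equation of the boundary divisor for $i \le r$ and a unit for $i > r$, so that the $d\chi^{e_i}/\chi^{e_i}$ give a frame --- together with taking $\bigwedge^k$ and checking equivariance through the action of $\rho(g)$ on characters, is the standard proof and the one in the cited source. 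One remark on scope: the reflexive-hull extension across the singular locus is the right device for the generality of Step~2 (where $\P$ is allowed quotient singularities and $\Omega^k_\P(\log D)$ must be read as the Zariski sheaf), but in Step~3, where the lemma is actually applied to compute $\chi(\P, \Omega^k_\P(\log D) \otimes \O_\P(mX'))$, the toric variety $\P$ is taken smooth with $D$ simple normal crossings, so your local frame computation already covers the case the paper needs.
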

%\begin{proof}
%Need reference. 
%\end{proof}

Recall that $\chi_{mP}$ denotes %the $G$-module corresponding to the 
the permutation representation given by the action of $G$ on the lattice points $mP \cap M$. %of $kP$. 
%and $\rho: G \rightarrow GL(M)$. 
 It follows from Lemma~\ref{l:logpoles} and \eqref{e:vanishing} that, for any non-negative integer $m$,  
 \[
 \chi(\P,  \Omega^k_{\P}(\log D) \otimes \mathcal{O}_{\P}(mX')) =  \chi_{mP} \cdot \bigwedge^k \rho. 
 \]
 %Substituting this back into our previous expression yields:
 We obtain the following expression for $-\chi(X', \Omega^{d - 1 - p}_{X' }(\log D_{X'}) )$,
 \[
  \bigwedge^{d - p} \rho 
+ \sum_{k = 1}^{p + 1} (-1)^{k}
  [\: \chi_{kP} \cdot \bigwedge^{d - 1 - p + k} \rho \: + \:
\chi_{kP} \cdot \bigwedge^{d - p + k} \rho  \:  ].
\]

%Below we set $\rho' = \rho + 1$, and let  $\det(\rho)$ denote the linear character associated to $\rho: G \rightarrow GL(M)$. 
%Recall that $M'_\C = M_\C \oplus \C$. 
%With the notation of Section~\ref{s2},  $M'_\C = M_\C \oplus \C$, and
%$\rho: G \rightarrow GL(M)$ induces the linear character $\det(\rho) =  \bigwedge^d M_\C$. 
 By Lemma~\ref{l:dual}, 
 % and the above calculations,
 if we set $\rho' = \rho + 1$ and $\det(\rho) = \wedge^d \rho$, then 
\begin{equation}\label{e:stepB}
\chi(X', \Omega^{d - 1 - p}_{X' }(\log D_{X'}) ) = \bigwedge^{d - p - 1} \rho
- \det(\rho) \cdot \sum_{k = 0}^{p + 1} (-1)^{k} 
\chi_{kP}  \cdot  \bigwedge^{p + 1 - k} \rho'.
\end{equation}
%Recall from the introduction that %With the notation of \eqref{e:hstar}, the latter expression is equal to:
Recall from Section~\ref{s:ee} that we consider a power series $\phi[t] = \sum_{i \ge 0} \phi_i t^i$ in $R(G)[[t]]$ of virtual representations defined by $\phi_0 = 1$ and 
%\[
%\sum_{m \ge 0} \chi_{mP}t^m = 
%\frac{ \phi[t]}{1 - \rho' \, t + \bigwedge^2 \rho' \, t^2 - \cdots + (-1)^{d + 1} \bigwedge^{d + 1} \rho' \, t^{d + 1}},  
%\]
%for some power series  $\phi[t] = \sum_{i \ge 0} \phi_i t^i \in R(G)[[t]]$. In particular, 
\begin{equation}\label{e:stepC}
\phi_{p + 1} = (-1)^{p + 1} \sum_{k = 0}^{ p + 1} (-1)^k \chi_{kP} \cdot  \bigwedge^{p + 1 - k}  \rho'.  
\end{equation}
Putting together \eqref{e:stepA}, \eqref{e:stepB}, and \eqref{e:stepC}, yields our desired result. 
When $G$ is trivial, this follows from Equation~4.4 and Remark~4.6 in \cite{DKAlgorithm}.

\begin{theorem}\label{t:xycharacteristic}%(cf. \cite[4.4]{DKAlgorithm})
With the notation above,
\[
\sum_q e^{p,q}_G(X^\circ) = (-1)^{d - 1 - p} \bigwedge^{d - 1 - p} \rho + (-1)^{d - 1}\det(\rho) \cdot \phi_{p + 1}. 
\]
\end{theorem}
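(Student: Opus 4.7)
The plan is to reduce the problem to computing $E_G(X^\circ; u, 1)$, since this polynomial packages exactly the sums $\sum_q e^{p,q}_G(X^\circ)$. First, I would observe that $X^\circ$ is smooth affine of dimension $d-1$, so Andreotti--Frankel gives $H^k_c X^\circ = 0$ for $k < d-1$, while the Danilov--Khovanski\u\i\ Gysin map $H^k_c X^\circ \to H^{k+2}_c T$ (Proposition~\ref{p:Gysin}) is a $G$-equivariant isomorphism of mixed Hodge structures of type $(1,1)$ for $k > d-1$. Combined with Example~\ref{e:tori}, this pins down $e^{p,q}_G(X^\circ)$ off of the antidiagonal $p+q = d-1$. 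Hence the content of the theorem lies entirely in computing $\sum_q e^{p,q}_G(X^\circ)$ for $p+q = d-1$, i.e.\ the specialization at $v=1$.

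The next step is to fix a $G$-equivariant toric resolution $f:\P(\Sigma)\to Y$ (existing by \cite{AWEquivariant}) and let $X'\subset \P$ be the proper transform of $X^\circ$, which is smooth, projective, $G$-invariant, and comes with a simple normal crossings boundary $D_{X'} = (\sum D_i)\cap X'$. Poincar\'e duality combined with the Hodge filtration description \eqref{e:Hodge} gives
\[
\sum_q e^{p,q}_G(X^\circ) \;=\; (-1)^{d-1-p}\,\chi\bigl(X',\,\Omega^{d-1-p}_{X'}(\log D_{X'})\bigr)
\]
as virtual $G$-representations. So the task becomes the computation of this logarithmic Euler characteristic.

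The main obstacle is to turn this Euler characteristic on $X'$ into a formula in $\chi_{mP}$ and exterior powers of $\rho$. I would exploit the conormal/adjunction sequence
\[
0 \to \Omega^{\bullet-1}_{X'}(\log D_{X'})\otimes \mathcal{O}_\P(-X')|_{X'} \to \Omega^\bullet_\P(\log D)|_{X'} \to \Omega^\bullet_{X'}(\log D_{X'}) \to 0,
\]
and push everything onto $\P$ via the short exact sequence $0\to \mathcal{O}_\P(-X')\to \mathcal{O}_\P\to \mathcal{O}_{X'}\to 0$ and the twists $\mathcal{O}_\P(kX')$. This reduces the computation to Euler characteristics of $\Omega^k_\P(\log D)\otimes \mathcal{O}_\P(mX')$ on the ambient toric variety. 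By Lemma~\ref{l:logpoles}, $\Omega^k_\P(\log D)\cong \bigwedge^k M\otimes_\Z \mathcal{O}_\P$, which is $G$-equivariantly $\bigwedge^k\rho\cdot\mathcal{O}_\P$, and by the vanishing \eqref{e:vanishing} the higher cohomology of $\mathcal{O}_\P(mX')$ disappears for $m\ge 0$, leaving $\chi_{mP}\cdot\bigwedge^k\rho$.

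Assembling these pieces gives an alternating sum expression for $\chi(X',\Omega^{d-1-p}_{X'}(\log D_{X'}))$ in terms of $\chi_{kP}$ and wedge powers of $\rho$. Using the duality Lemma~\ref{l:dual} in the form $\bigwedge^i\rho\cdot\det(\rho)\cong\bigwedge^{d-i}\rho$, one can rewrite this sum so that it matches the coefficient extraction of the generating function defining $\phi[t]$ in Section~\ref{s:ee}, namely
\[
\phi_{p+1} \;=\; (-1)^{p+1}\sum_{k=0}^{p+1}(-1)^k\chi_{kP}\cdot\bigwedge^{p+1-k}(\rho+1).
\]
Isolating the stray top exterior term yields precisely $(-1)^{d-1-p}\bigwedge^{d-1-p}\rho + (-1)^{d-1}\det(\rho)\cdot\phi_{p+1}$. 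I expect the combinatorial bookkeeping in the last step---keeping track of the signs, the shift $\rho' = \rho + 1$, and the role of $\det(\rho)$ under Lemma~\ref{l:dual}---to be the delicate part; the geometry is standard Danilov--Khovanski\u\i, and the novelty is simply that every spectral sequence and every sheaf admits a $G$-equivariant refinement.
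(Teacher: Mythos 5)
Your proposal is correct and follows essentially the same route as the paper's own proof: Step~1 (Andreotti--Frankel plus the Danilov--Khovanski\u\i\ Gysin map) to reduce to the antidiagonal, then the identification $\sum_q e^{p,q}_G(X^\circ) = (-1)^{d-1-p}\chi(X',\Omega^{d-1-p}_{X'}(\log D_{X'}))$ via Poincar\'e duality and \eqref{e:Hodge}, the conormal sequence pushed to $\P$ with the twists $\mathcal{O}_\P(kX')$, Lemma~\ref{l:logpoles} with the vanishing \eqref{e:vanishing}, and finally Lemma~\ref{l:dual} together with the identity \eqref{e:stepC} for $\phi_{p+1}$. The only cosmetic difference is that you call $X'$ the proper transform where the paper takes the closure of $X^\circ$ in $\P$ (these agree by non-degeneracy), so nothing substantive separates the two arguments.
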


%Since $e_G^{p, q}(X^\circ) = 0$ if $p > d - 1$ (Remark~\ref{r:vanishing}), we immediately deduce the following corollary. 
As an immediate corollary, we see below that in this geometric situation the representations $\phi_i$ are effective representations. Given a finite group $G$ and $G$-invariant lattice polytope $P$, 
%On the other hand, we may define the power series $\phi[t]$ for any $G$-invariant polytope $P$, and
it is a very subtle question to determine when the virtual representations $\phi_i$ are effective representations (see Section~ 7 in \cite{YoEquivariant}). 
%Recall from Step~1 that the primitive cohomology group $H^{d - 1}_{c, \prim}  X^\circ$ admits a mixed Hodge structure. 

\begin{corollary}\label{c:vanishing}
If there exists a $G$-invariant, non-degenerate hypersurface $X^\circ \subseteq T$ with Newton polytope $P$, then 
%With the notation of the introduction, 
$\phi_0$ is the trivial representation and 
\[
\phi_{p + 1} = \det(\rho) \cdot F^{p} H^{d - 1}_{c, \prim}  X^\circ/ F^{p + 1} H^{d - 1}_{c, \prim}  X^\circ
\]
for $p \ge 0$. In particular, the virtual representations $\phi_i$ are effective representations.
%and satisfy
%The virtual representations 
%$\phi_i = 0$ for $i > d$.
% \comment{this only works if $p \ge q$ i.e. $p \ge d - 1 - p$ i.e. $p \ge \frac{d - 1}{2}$. Actually holds for all $p$ but need to be careful later. }
\end{corollary}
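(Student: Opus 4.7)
The identity $\phi_0 = 1$ is immediate by setting $t = 0$ in the defining relation
\[
1 + \sum_{m \ge 1} \chi_{mP} t^m = \frac{\phi[t]}{(1-t)\det(I - \rho t)}.
\]
For the main identity, my plan is to compare two expressions for $\sum_q e_G^{p,q}(X^\circ)$: the one supplied by Theorem~\ref{t:xycharacteristic}, and the one obtained by unwinding the definition as
\[
\sum_q e_G^{p,q}(X^\circ) = \sum_k (-1)^k \bigl( F^p H^k_c X^\circ / F^{p+1} H^k_c X^\circ \bigr),
\]
using the structural results of Step~1.

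First I would use Lemma~\ref{l:affine} to discard the terms with $k < d - 1$. For $k > d - 1$, Proposition~\ref{p:Gysin} identifies $H^k_c X^\circ$ with $H^{k+2}_c T$ via an isomorphism of Hodge type $(1,1)$; combined with Example~\ref{e:tori}, this forces $H^k_c X^\circ$ to be pure of Hodge type $(k+1-d, k+1-d)$ and isomorphic as a $G$-representation to $\bigwedge^{2d-k-2}\rho$. Only $k = d + p - 1$ (which forces $p \ge 1$) then contributes to the $p$-graded piece, yielding $(-1)^{d+p-1}\bigwedge^{d-1-p}\rho$.

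The main technical step handles $k = d - 1$. Because the Gysin map $\gamma\colon H^{d-1}_c X^\circ \to H^{d+1}_c T$ is of Hodge type $(1,1)$, it sends $F^p$ into $F^{p+1}$; since $H^{d+1}_c T$ is pure of type $(1,1)$, we have $F^{p+1} H^{d+1}_c T = 0$ for $p \ge 1$. Thus $\gamma$ annihilates $F^p H^{d-1}_c X^\circ$ whenever $p \ge 1$, whence $F^p H^{d-1}_c X^\circ = F^p H^{d-1}_{c,\prim} X^\circ$ and the corresponding graded pieces coincide. For $p = 0$, the surjectivity of $\gamma$ together with its vanishing on $F^1$ yields a short exact sequence of $G$-representations
\[
0 \to \frac{H^{d-1}_{c,\prim} X^\circ}{F^1 H^{d-1}_{c,\prim} X^\circ} \to \frac{H^{d-1}_c X^\circ}{F^1 H^{d-1}_c X^\circ} \to H^{d+1}_c T \to 0,
\]
which splits over $\C$, and Example~\ref{e:tori} identifies the rightmost term with $\bigwedge^{d-1}\rho$.

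Assembling all contributions and equating with Theorem~\ref{t:xycharacteristic}, the terms $(-1)^{d+p-1}\bigwedge^{d-1-p}\rho$ and $(-1)^{d-1-p}\bigwedge^{d-1-p}\rho$ agree (since $(-1)^{d+p-1} = (-1)^{d-1-p}$) and therefore cancel for $p \ge 1$; in the $p = 0$ case the extra $\bigwedge^{d-1}\rho$ contributed by the short exact sequence above cancels the corresponding Gysin-free residual. What remains is $\det(\rho)\,\phi_{p+1} = F^p H^{d-1}_{c,\prim} X^\circ / F^{p+1} H^{d-1}_{c,\prim} X^\circ$ for every $p \ge 0$; multiplying by $\det(\rho)$ and using $\det(\rho)^2 = 1$ gives the stated formula, and effectivity of $\phi_{p+1}$ follows since the right-hand side is an honest (non-virtual) representation. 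I expect the main obstacle to be the Hodge-filtration bookkeeping around $p = 0$: extracting the short exact sequence above from the type-$(1,1)$ structure of $\gamma$ requires care, though the remaining sign cancellations are formal.
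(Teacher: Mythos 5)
Your proposal is correct and follows essentially the same route as the paper: both equate the expression from Theorem~\ref{t:xycharacteristic} with the expansion $\sum_k (-1)^k F^{p} H^k_c X^\circ/ F^{p+1} H^k_c X^\circ$, discard $k < d-1$ via Lemma~\ref{l:affine}, identify the $k > d-1$ terms through the Gysin isomorphism and Example~\ref{e:tori}, isolate the primitive contribution at $k = d-1$, and conclude by cancelling $\bigwedge^{d-1-p}\rho$ and multiplying by $\det(\rho)$. The only difference is presentational: the filtration bookkeeping you carry out explicitly (that $\gamma$ kills $F^p$ for $p \ge 1$, and the short exact sequence at $p = 0$) is exactly what the paper compresses into the remark that for $p = 0$ the first isomorphism must be replaced by a surjection.
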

\begin{proof}
It follows from the definitions that $\phi_0$ is the trivial representation. 
By definition,  
\begin{equation}\label{e:xy}
\sum_q e^{p,q}_G(X^\circ) =  \sum_k (-1)^k F^{p} H^k_c X^\circ/ F^{p  + 1} H^k_c X^\circ.
\end{equation}
By Proposition~\ref{p:Gysin} and Example~\ref{e:tori}, 
\[
F^{p} H^{d - 1 + p}_c X^\circ/ F^{p + 1} H^{d - 1 + p}_c X^\circ \cong F^{p + 1} H^{d  + p + 1}_c T / F^{p + 2} H^{d +  p + 1}_c T \cong \bigwedge^{d - 1 - p} \rho. 
\]
Note that the above equality holds for $p > 0$, and, when $p = 0$, the equation holds if the first isomorphism is replaced by a surjection. 
Moreover, by Lemma~\ref{l:affine} and Proposition~\ref{p:Gysin}, the only other contribution to the right hand side of \eqref{e:xy} is 
$F^{p} H^{d - 1}_{c, \prim}  X^\circ/ F^{p + 1} H^{d - 1}_{c, \prim}  X^\circ$. The result now follows immediately from Theorem~\ref{t:xycharacteristic} using the fact 
%(as explained in the  proof of Lemma~\ref{l:dual}) 
that $\det(\rho)^2$ is the trivial representation. % (see the proof of Lemma~\ref{l:dual}). 
\end{proof}

%For any face $Q$ of $P$, let $\chi^*_Q$
%denote the permutation representation induced by the action of $G$ on the interior lattice points $\Int(Q) \cap M$ of $Q$. The following result is proved in \cite{YoEquivariant} (cf. Corollary~\ref{c:reciprocity}). 

%\begin{lemma}\cite[Corollary~6.6]{YoEquivariant}\label{l:interior}
%if $\phi[t]$ is a polynomial, then $\phi_d = \chi^*_P$. \comment{fix}
%\end{lemma}

We have the following immediate corollary. In the case when $G$ is trivial, this follows from 
Proposition~5.8 in \cite{DKAlgorithm}. 
% (cf. ). 
%We end the section with an explicit computation. 
%When $G$ is trivial, the following result is due to Danilov and Khovanski{\u\i}  \cite[Proposition~5.8, Corollary~5.10]{DKAlgorithm}. 
Recall that $\chi^*_P$ denotes the permutation representation $\chi_{\lef \Int(P) \cap M \rig}$. 

\begin{corollary}\label{c:zero}
%\comment{might need to assume the action is proper}
With the notation above,
\[
H^{d - 1, 0}(H^{d - 1}_c X^\circ) = (-1)^{d - 1} e^{d - 1,0}_G(X^\circ) = \det(\rho) \cdot \chi^*_P.
\]
%Moreover, if $\Phi$ is the permutation representation induced by the action of $G$ on the lattice points in the $1$-skeleton of $P$, then 
%\[
%e^{0,0}_G(X^\circ) = (-1)^{d - 1} \det(\rho) \cdot (\Phi - 1). 
%\]
\end{corollary}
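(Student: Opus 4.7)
The strategy is to deduce the corollary as a direct specialization of Theorem~\ref{t:xycharacteristic} at $p = d - 1$, combined with the Ehrhart reciprocity result of Corollary~\ref{c:reciprocity}. The plan is first to substitute $p = d - 1$ into Theorem~\ref{t:xycharacteristic}, which gives
\[
\sum_q e^{d-1,q}_G(X^\circ) = 1 + (-1)^{d-1} \det(\rho) \cdot \phi_d.
\]
Since Corollary~\ref{c:vanishing} guarantees that $\phi[t]$ is a polynomial when a non-degenerate $G$-invariant hypersurface with Newton polytope $P$ exists, Corollary~\ref{c:reciprocity} applies and identifies $\phi_d = \chi^*_P$, so the right-hand side becomes $1 + (-1)^{d-1}\det(\rho)\cdot \chi^*_P$.

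Next I would isolate the $q = 0$ summand on the left-hand side by evaluating every other term via the Gysin formula \eqref{e:Gysin} from Step~1. For $q > 0$ we have $p + q > d - 1$, and that formula asserts $e^{d-1,q}_G(X^\circ)$ vanishes unless $q = d-1$ (the diagonal case), where it equals $1$. Assuming $d \ge 2$, this single Gysin contribution cancels the $1$ already present on the right-hand side, and the remaining identity reads
\[
e^{d-1,0}_G(X^\circ) = (-1)^{d-1}\det(\rho)\cdot \chi^*_P.
\]

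To convert this Euler-characteristic statement into the genuine $G$-representation $H^{d-1,0}(H^{d-1}_c X^\circ)$, I would argue that in the defining sum $e^{d-1,0}_G(X^\circ) = \sum_k (-1)^k H^{d-1,0}(H^k_c X^\circ)$ only one term is nonzero. Lemma~\ref{l:affine} kills all $k < d-1$, and for each $k > d-1$ the Gysin isomorphism of type $(1,1)$ provides $H^{d-1,0}(H^k_c X^\circ) \cong H^{d,1}(H^{k+2}_c T)$, which vanishes because the nontrivial Hodge-Deligne pieces of a torus are concentrated on the diagonal (Example~\ref{e:tori}) and $d \ne 1$. Hence only $k = d-1$ survives, yielding the stated equality.

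The argument is essentially bookkeeping around previously established structural results; no new geometric input is needed. The main (minor) obstacle is merely keeping track of sign conventions and of the asymmetry between the diagonal Gysin contribution at $q = d-1$ and the target term at $q = 0$, which requires care when $d = 1$; but for $d \ge 2$ the three steps above go through directly.
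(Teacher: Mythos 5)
Your proposal is correct and follows essentially the same route as the paper's own proof: specialize Theorem~\ref{t:xycharacteristic} at $p = d-1$, use Lemma~\ref{l:affine}, Proposition~\ref{p:Gysin} and \eqref{e:Gysin} to reduce the sum over $q$ to the $q=0$ term plus the diagonal contribution $e^{d-1,d-1}_G(X^\circ)=1$ (and to identify $e^{d-1,0}_G$ with $(-1)^{d-1}H^{d-1,0}(H^{d-1}_c X^\circ)$), then invoke Corollary~\ref{c:reciprocity} to get $\phi_d = \chi^*_P$. Your caveat about $d=1$ is harmless, since the paper's argument likewise implicitly assumes $d \ge 2$ when it separates the $q=0$ and $q=d-1$ terms.
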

\begin{proof}
By definition $e^{d - 1,q}_G(X^\circ) = \sum_{k \ge d - 1 + q} (-1)^k H^{d - 1,q}(H^k_c X^\circ)$.
Lemma~\ref{l:affine}, Proposition~\ref{p:Gysin} and \eqref{e:Gysin} 
 %and hence,   Lemma~\ref{l:affine} and Proposition~\ref{p:Gysin} 
 imply the first equality, and the equation
% Moreover, 
%by Proposition~\ref{p:Gysin} and \eqref{e:Gysin}, 
\[
\sum_q e^{d - 1,q}_G (X^\circ) = e^{d - 1,0}_G (X^\circ) + e^{d - 1,d - 1}_G (X^\circ) = e^{d - 1,0}_G (X^\circ) + 1. 
\]
On the other hand, Theorem~\ref{t:xycharacteristic} implies that the representations $\phi_i$ are effective and $\sum_q e^{d - 1,q}_G (X^\circ) = (-1)^{d - 1} \det(\rho) \cdot \phi_{d} + 1$.  We conclude that $e^{d - 1,0}_G (X^\circ) = (-1)^{d - 1} \det(\rho) \cdot \phi_{d}$, and the result follows from Corollary~\ref{c:reciprocity}. 
\end{proof}

%The following corollary will be useful in future computations. 
%In the case when $G$ is trivial, the corollary below follows from  
%Proposition~5.8 and its proof in \cite{DKAlgorithm} (cf. Corollary~\ref{c:p0}).  

Our next goal is to prove several corollaries which will be useful for proving parts of the equivariant mirror symmetry conjecture in Section~\ref{s:mirror}. 
Recall that $Y$ is the toric variety defined by the normal fan to $P$, and recall that if $G$ acts on a set $S$, then we write $\chi_{\lef S \rig}$ for the corresponding permutation representation. Let $\Phi_k$ denote the lattice points 
in $P$ which lie in the relative interior of a $k$-dimensional face of $P$.

For the remainder of the section, we consider the following setup:

\emph{ Let  $\P = \P(\Sigma)$  be a %smooth, 
complete %projective 
toric variety with at worst quotient singularities and with a  $G$-action via toric morphisms, admitting a $G$-invariant, proper, birational morphism $f: \P \rightarrow Y$. Let $X'$ denote the closure of $X^\circ$ in $\P$. }

We briefly recall the notation and results from \ref{step2}. That is,  for every cone $\tau'$ in $\Sigma$, let $f(\tau') = Q$, where the normal cone $\tau = \tau_Q$ to $Q$ is %denote 
the smallest cone in the normal fan %$\triangle$ 
to $P$ containing $\tau'$. Then
\begin{equation}\label{e:coffee}
E_G (X') = \sum_{[\tau'] \in \Sigma /G} \Ind_{G_{\tau'}}^G [E_{G_{\tau'}}(X \cap T_{f(\tau')})E_{G_{\tau'}}( T_{\tau',f})],
\end{equation}
where  $\Sigma/G$ denotes the set of orbits of $G$ acting on the cones in $\Sigma$, $\tau'$ denotes a representative of an orbit, $G_{\tau'}$ denotes the isotropy group of $\tau'$, and 
$T_{\tau',f} = \Spec \C[M_{\tau'}/M_{\tau}]$. 

In the case when $G$ is trivial, the corollary below follows from  
Proposition~5.8 and its proof in \cite{DKAlgorithm} (cf. Corollary~\ref{c:p0}).

\begin{corollary}\label{c:zerosmooth}
% Let  $\P = \P(\Sigma)$  be a smooth, complete %projective 
%toric variety with a  $G$-action via toric morphisms, admitting a $G$-invariant, proper, birational morphism $f: \P \rightarrow Y$, and let $X'$ denote the closure of $X^\circ$ in $\P$. Then 
With the notation above, 
\[
H^{d - 1,0}(X') = %(-1)^{d - 1} e^{d - 1, 0}_G (X') = 
\det(\rho) \cdot \chi^*_P,
\]
 and 
\[
H^{p,0}(X') %= e^{p,0}_G(X') 
= 0 \: \: \textrm{ for } 0 < p < d - 1.
\]
 Moreover,
$e^{d - 2,0}_G (X^\circ) = (-1)^{d - 1} \det(\rho) \cdot \chi_{\lef \Phi_{d - 1} \rig}$ for $d \ge 3$.
\end{corollary}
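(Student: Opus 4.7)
The plan is to deduce all three assertions from the stratification \eqref{e:coffee} for $E_G(X')$, combined with Corollary~\ref{c:zero} and the observation (from Example~\ref{e:tori}) that for any torus $T_{\tau',f}$ with $G_{\tau'}$-action one has $e^{p,0}_{G_{\tau'}}(T_{\tau',f})=0$ for $p>0$. In particular, the K\"unneth product $e^{p,0}_{G_{\tau'}}((X\cap T_{f(\tau')})\times T_{\tau',f})$ collapses to $e^{p,0}_{G_{\tau'}}(X\cap T_{f(\tau')})\cdot e^{0,0}_{G_{\tau'}}(T_{\tau',f})$, so only the Hodge bidegree $(p,0)$ on the hypersurface factor matters.

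For the first identity, taking $p=d-1$, the factor $e^{d-1,0}_{G_{\tau'}}(X\cap T_{f(\tau')})$ vanishes unless $\dim f(\tau')\geq d$, which forces $f(\tau')=P$ and hence $\tau'=0$, with $T_{\tau',f}=\pt$. The unique surviving contribution is $e^{d-1,0}_G(X^\circ)=(-1)^{d-1}\det(\rho)\cdot\chi^*_P$ by Corollary~\ref{c:zero}, and Example~\ref{e:smooth} then gives $H^{d-1,0}(X')=\det(\rho)\cdot\chi^*_P$.

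For the vanishing in the middle range, the plan is instead cohomological: the kernel of the restriction $H^p(X')\to H^p(X^\circ)$ is the image of a Gysin-type map from the cohomology of the SNC divisor $D_{X'}$, which shifts Hodge bidegree by $(1,1)$ and therefore has no $(p,0)$ component, so $H^{p,0}(X')\hookrightarrow H^{p,0}(H^pX^\circ)$. On the other hand, Proposition~\ref{p:Gysin} combined with Poincar\'e duality on $X^\circ$ and on $T$ yields a $G$-equivariant isomorphism $H^p(X^\circ)\cong H^p(T)$ for $p<d-1$, and by Example~\ref{e:tori} the right-hand side is pure Hodge--Tate of type $(p,p)$. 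Hence $H^{p,0}(H^pX^\circ)=0$ for $0<p<d-1$, and the claim follows.

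For the final identity, isolating the open stratum from \eqref{e:coffee} and invoking Part~2 (valid since $d\geq 3$) gives
\[
0=e^{d-2,0}_G(X')=e^{d-2,0}_G(X^\circ)+\sum_{[\tau']\ne 0}\Ind_{G_{\tau'}}^G\bigl[e^{d-2,0}_{G_{\tau'}}(X\cap T_{f(\tau')})\cdot e^{0,0}_{G_{\tau'}}(T_{\tau',f})\bigr].
\]
The dimension bound on $X\cap T_{f(\tau')}$ restricts the surviving summands to $\tau'=\tau_Q$ for $Q$ a facet of $P$, in which case $T_{\tau',f}=\pt$. Corollary~\ref{c:zero} applied to the $(d-2)$-dimensional non-degenerate hypersurface $X\cap T_Q$ evaluates the remaining factor as $(-1)^{d-2}\det(\rho_Q)\cdot\chi^*_Q$. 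The key observation is that $G_Q$ preserves the ray $\tau_Q\subseteq N_\R$ and therefore acts trivially on $M/M_{\tau_Q}\cong\Z$, so $\det(\rho_Q)=\det(\rho)|_{G_Q}$; the projection formula together with the decomposition of $\Phi_{d-1}$ into $G$-orbits of interior lattice points of facets then assembles $e^{d-2,0}_G(X^\circ)=(-1)^{d-1}\det(\rho)\cdot\chi_{\lef\Phi_{d-1}\rig}$. The main subtlety is the Hodge-theoretic injectivity step in Part~2; the rest is mechanical bookkeeping on \eqref{e:coffee}.
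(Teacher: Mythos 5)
Your first and third assertions are proved exactly as in the paper: the paper also extracts the coefficient of $u^{d-1}$ (resp.\ of $u^{d-2}$) from \eqref{e:coffee}, observes that only the open stratum (resp.\ the open stratum and the strata $\tau'=\tau_Q$ with $Q$ a facet, since any $\tau'$ with $f(\tau')=Q$ a facet is contained in the ray $\tau_Q$ and hence equals it) can contribute, and then invokes Corollary~\ref{c:zero} together with the identity $\det\rho(g)=\det\rho_Q(g)$ for $g$ fixing a facet $Q$. In fact you do slightly more than the paper here, since you justify this determinant identity (triviality of the $G_Q$-action on the rank-one quotient $M/M_{\tau_Q}$), which the paper merely asserts.

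For the middle vanishing $H^{p,0}(X')=0$, your route is genuinely different. The paper stays inside the $E$-polynomial formalism: by \eqref{e:Gysin} the monomial $u^{d-1-p}v^{d-1}$ does not occur on the right-hand side of \eqref{e:coffee}, and the symmetry $E_G(X';u,v)=(uv)^{d-1}E_G(X';u^{-1},v^{-1})$ of Example~\ref{e:smooth} then forces $e^{p,0}_G(X')=0$. You instead argue cohomologically, via the SNC Gysin description of $\ker[H^p(X')\to H^p(X^\circ)]$ and the fact that $H^p(X^\circ)\cong H^p(T)$ is Hodge--Tate for $p<d-1$ (the dual form of Proposition~\ref{p:Gysin}). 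Both steps of your argument are sound, and your version has the merit of explaining geometrically where the vanishing comes from. However, there is a gap in generality: the setup governing this corollary only assumes that $\P=\P(\Sigma)$ has quotient singularities, so $X'$ is in general an orbifold and $D_{X'}$ need not be a simple normal crossings divisor on a smooth variety; your appeal to the Gysin sequence for an SNC compactification (images of maps of bidegree $(1,1)$) is justified only when $\P$ is smooth along $X'$. The orbifold case is not vacuous for the paper, since the corollary is later applied to crepant resolutions (Example~\ref{e:boundary}). The gap is fixable: either pass to a $G$-equivariant smooth toric refinement $\P''\to\P$, note that for the complete orbifold $X'$ the pullback $H^p(X')\to H^p(X'')$ is injective (as $\pi_*\pi^*=\id$ by Poincar\'e duality with $\Q$-coefficients) and a morphism of pure Hodge structures, and apply your argument on the smooth model $X''$; or simply substitute the paper's symmetry argument, which works verbatim at the stated level of generality because Example~\ref{e:smooth} is formulated for quotient singularities.
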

\begin{proof}
After comparing coefficients of $u^{d - 1}$ on both sides of \eqref{e:coffee}, the first claim follows from 
Corollary~\ref{c:zero}. 
It follows from \eqref{e:Gysin} that $u^{d - 1 - p}v^{d - 1}$ does not appear as a coefficient in the right hand side of \eqref{e:coffee}. The second claim now follows since $E_G (X'; u,v) = (uv)^{d - 1} 
E_G (X';u^{-1}, v^{-1})$ by Example~\ref{e:smooth}. 
Comparing coefficients of $u^{d - 2}$ on both sides of \eqref{e:coffee} yields
\[
0 = e^{d - 2,0}_G (X^\circ) +  \sum_{  \substack{[Q] \in P/G \\ \dim Q = d - 1}  } \Ind_{G_Q}^{G} 
e_{G_Q}^{d - 2,0} (X \cap T_Q),  
\]
where $P/G$ denotes the set of $G$-orbits of faces of $P$. The result now follows from Corollary~\ref{c:zero}, using the fact that if $g$ in $G$ fixes a facet $Q$ of $P$, then $\det \rho(g) = \det \rho_Q(g)$. 
\end{proof}

For any face $Q$ of $P$, let $G_Q$ denote the isotropy subgroup of $Q$. 
As in Section~\ref{s2}, let $M^Q$ be a translate of the intersection of the affine span of $Q$ with $M'$ to the origin, with corresponding representation $\rho_Q: G_Q \rightarrow GL(M^Q)$. %\begin{definition}\label{d:bball}
For each non-negative integer $r$, we define a representation 
\begin{equation}\label{e:theta}
\theta(r) = \theta_{\Sigma}(r) = \sum_{\substack{[Q] \in P / G \\ \dim Q = r}} \Ind_{G_Q}^G [ \det(\rho_Q) \cdot 
\chi^*_Q \cdot  \chi^*_{\tau_Q} ],       % \chi_{\lef Q, \tau_Q \rig} ], 
\end{equation}
where $\chi^*_{\tau_Q}$   %$\chi_{\lef Q, \tau_Q \rig}$ 
denotes the permutation representation induced by the action of $G_Q$ on 
all rays in $\Sigma$ which lie in  the relative interior of the normal cone $\tau_Q$ to $Q$. 
%all pairs $(v, \rho)$ such that $v \in \Int Q \cap M$, and $\rho$ is a ray in $\Sigma$ lying in the relative interior of the normal cone $\tau_Q$ to $Q$. 
%\end{definition}

\begin{corollary}\label{c:compute2}
With the notation above, if $S(\Sigma)$ denotes the set of rays of 
$\Sigma$ not lying in the interior of a maximal cone of the normal fan to $P$ and $d \ge 3$, then 
the non-primitive part of the $G$-representation $H^{1,1}(X')$ equals 
\[
%\chi_{\lef \Phi_{1} \rig} + 
\theta(1) + \chi_{\lef S(\Sigma) \rig} - \rho.  
\]
\end{corollary}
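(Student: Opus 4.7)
The plan is to compute $e^{1,1}_G(X')$ directly from the stratification formula \eqref{e:coffee}, identify the portion corresponding to the primitive cohomology $H^{1,1}_{\prim}(X')$, and show the remainder is $\theta(1) + \chi_{\lef S(\Sigma)\rig} - \rho$. Geometrically, the two summands correspond respectively to divisor classes on $\P$ that restrict non-trivially to $X'$ (divisors on rays in the interior of a maximal cone of $\triangle$ are disjoint from $X'$, which is why $S(\Sigma)$ rather than $\Sigma(1)$ appears) and to new divisor classes on $X'$ arising from the subdivision of the $1$-dimensional torus orbits of $Y$.

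First I would apply \eqref{e:coffee} and extract the coefficient of $uv$. By Example~\ref{e:tori}, $E_{G_{\tau'}}(T_{\tau',f})$ is supported on the Hodge diagonal, so the contribution from each class $[\tau']$ with $\tau' \neq 0$ and $Q := f(\tau')$ of dimension at least one is
\[
\Ind_{G_{\tau'}}^G\bigl[(-1)^r \wedge^r \rho_{\tau',f} \cdot e^{1,1}_{G_{\tau'}}(X \cap T_Q) + (-1)^{r+1} \wedge^{r-1} \rho_{\tau',f} \cdot e^{0,0}_{G_{\tau'}}(X \cap T_Q)\bigr],
\]
where $r = \dim\tau_Q - \dim\tau'$. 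The $\tau'=0$ summand is $e^{1,1}_G(X^\circ)$, which by the Gysin isomorphism \eqref{e:Gysin} and Example~\ref{e:tori} differs from $H^{1,1}_{\prim}(X')$ by the correction $\wedge^{d-2}\rho$ in the representation ring; this is the piece to be stripped off as ``primitive.''

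Next I would compute $e^{0,0}_{G_{\tau'}}(X \cap T_Q)$ and $e^{1,1}_{G_{\tau'}}(X \cap T_Q)$ according to $k := \dim Q$. For $k = 1$, $X \cap T_Q$ is a finite set of points, and Theorem~\ref{t:xycharacteristic} in dimension one together with the Gysin surjection $H^0_c \twoheadrightarrow H^2_c(T_Q)$ yields $e^{0,0}_{G_{\tau'}}(X \cap T_Q) = 1 + \det(\rho_Q)\chi^*_Q$ while $e^{1,1}$ vanishes. For $k = 2$, \eqref{e:Gysin} gives $e^{1,1}_{G_{\tau'}}(X \cap T_Q) = 1$, and the analogue of Corollary~\ref{c:zero} handles $e^{0,0}$. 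For $k \geq 3$, Step~2 of the algorithm together with the duality of Example~\ref{e:smooth} (applied inductively to smooth toric compactifications of $X \cap T_Q$) expresses both terms via lower-dimensional strata, and a telescoping argument shows these contributions cancel in the final sum. Separating the ray contributions ($\dim\tau'=1$) and using Lemma~\ref{l:dual} to rewrite exterior-power factors, the ``$1$'' part of $e^{0,0}_{G_{\tau'}}(X \cap T_Q)$, summed over $\tau' \in S(\Sigma)$ and faces $Q$ with $\tau' \in \Int(\tau_Q)$, reproduces $\chi_{\lef S(\Sigma)\rig} - \rho$ via the standard toric divisor-class relation (which supplies the $-\rho$), while the $\det(\rho_Q)\chi^*_Q$ part for edges $Q$ and rays $\tau' \in \Int(\tau_Q) \cap \Sigma(1)$ assembles into $\theta(1)$ after inducing up to $G_Q$ and then to $G$.

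The main obstacle will be the bookkeeping needed to verify the cancellation for faces of dimension $\geq 3$ and to keep track of all signs and exterior-power identities via Lemma~\ref{l:dual}. A cleaner alternative is to prove the identity character-theoretically: evaluating both sides at each $g \in G$ reduces the claim to the non-equivariant statement of \cite[Proposition~5.8]{DKAlgorithm} applied to the $g$-fixed subvarieties, which are themselves unions of non-degenerate hypersurfaces in lower-dimensional tori, and the hypothesis $d \geq 3$ ensures that the Gysin correction $\wedge^{d-2}\rho$ and the ambient-dimension count for $H^2(\P)$ behave as expected.
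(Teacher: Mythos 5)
Your overall strategy --- stratify via \eqref{e:coffee} and extract a coefficient --- is the paper's, but you extract the coefficient of $uv$ rather than of $(uv)^{d-2}$, and this choice creates two genuine gaps. First, your treatment of the $\tau'=0$ summand misapplies the Gysin isomorphism: \eqref{e:Gysin} computes $e^{p,q}_G(X^\circ)$ only in the range $p+q>d-1$, so for $d\ge 3$ it says nothing about $e^{1,1}_G(X^\circ)$, and likewise it gives neither $e^{1,1}$ nor $e^{0,0}$ of a stratum $X\cap T_Q$ once $\dim Q\ge 3$. These low-degree coefficients are exactly the quantities that require the full inductive algorithm of Step~2 of Section~\ref{s:ehd}; they have no closed form, and your claim that the contributions from faces of dimension $\ge 3$ ``cancel by a telescoping argument'' is precisely the hard content of this route, asserted rather than proved. (Its truth is equivalent, via the duality $E_G(X';u,v)=(uv)^{d-1}E_G(X';u^{-1},v^{-1})$, to nontrivial identities among the strata contributions, since the individual summands of \eqref{e:coffee} are not self-dual.) Second, the proposed character-theoretic shortcut is not valid: evaluating Proposition~\ref{p:induced} at $g$ reduces to $g$-\emph{invariant} strata, on each of which one still needs the \emph{trace} of $g$, which is equivariant data; the non-equivariant \cite[Proposition~5.8]{DKAlgorithm} computes dimensions (traces at the identity) for a non-degenerate hypersurface, and the pointwise $g$-fixed locus of $X'$ is not such a hypersurface, so it cannot supply those traces.

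The missing idea is to dualize first. Since $X'$ is complete with at worst quotient singularities, Example~\ref{e:smooth} gives $H^{1,1}(X')\cong H^{d-2,d-2}(X')$, so the paper instead compares coefficients of $(uv)^{d-2}$ in \eqref{e:coffee}. At that degree everything collapses for dimension reasons: only cones $\tau'$ with $\dim\tau'\le 1$ can contribute; each torus factor $T_{\tau',f}$ contributes its top coefficient, which is $1$; and each hypersurface stratum contributes only its top coefficient, which \emph{is} in the Gysin range --- equal to $1$ when $\dim f(\tau')\ge 2$, and equal to $1+\det(\rho_Q)\cdot\chi^*_Q$ when $Q=f(\tau')$ is an edge, by the $1$-dimensional case of Theorem~\ref{t:xycharacteristic} together with Corollary~\ref{c:reciprocity}. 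Rays with $f(\tau')$ a vertex contribute nothing, since $X$ misses the torus fixed points; this is why only $\chi_{\lef S(\Sigma) \rig}$ appears. Summing gives $e^{d-2,d-2}_G(X')=e^{d-2,d-2}_G(X^\circ)+\chi_{\lef S(\Sigma) \rig}+\theta(1)$, and \eqref{e:Gysin} (applicable outright for $d\ge4$; for $d=3$ after passing to the non-primitive part) identifies the remaining term as $-\rho$. Your computations for the edge and two-dimensional strata are correct and agree with the paper's, but without this duality step the argument does not close.
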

\begin{proof}
By Theorem~\ref{t:xycharacteristic} and Corollary~\ref{c:reciprocity}, if $P$ is $1$-dimensional, then 
\[
e^{0,0}_G (X^\circ) = e^{0,0}_G (X) = 1 + \det(\rho) \cdot \chi_P^*.  
\]
Hence, if we compare coefficients of $(uv)^{d - 2}$ on both sides  of \eqref{e:coffee}, we obtain 
the following expression for $e^{d - 2, d - 2}_G(X')$
\[
%e^{d - 2, d - 2}_G(X') = 
e^{d - 2, d - 2}_G(X^\circ) +  %\chi_{\lef \Phi_{1} \rig} + 
 \sum_{\substack{[\tau'] \in \Sigma /G \\ \dim \tau' = 1, \dim f(\tau') > 0  }} \Ind_{G_{\tau'}}^G 1
 +  \sum_{\substack{[\tau'] \in \Sigma /G \\ \dim \tau' = 1, \dim f(\tau') = 1  }} \Ind_{G_{\tau'}}^G 
 \det(\rho_{f(\tau')}) \cdot \chi_{f(\tau')}^*.
\]
By \eqref{e:Gysin}, we obtain
\[
e^{d - 2, d - 2}_G(X') = -\rho + \chi_{\lef S(\Sigma) \rig} + \theta(1),
\]
as desired. 
\end{proof}

\excise{
\begin{remark}\label{r:compute}
With the notation of the above proof, 
if we compare coefficients of $(uv)^{d - 2}$ on both sides  of \eqref{e:coffee}, we obtain 
\[
e^{d - 2, d - 2}_G(X') = e^{d - 2, d - 2}_G(X^\circ) +  \chi_{\lef \Phi_{1} \rig} + 
 \sum_{\substack{\tau' \in \Sigma /G \\ \dim \tau' = 1, \dim f(\tau') < d - 1 }} \Ind_{G_{\tau'}}^G 1.
\]
By \eqref{e:Gysin} and for $d \ge 3$, we conclude that the non-primitive part of the $G$-representation $H^{1,1}(X')$ is the sum of $- \rho$, $\chi_{\lef \Phi_{1} \rig}$,  and the permutation representation induced by the action of $G$ on the rays of 
$\Sigma$ not lying in the interior of a maximal cone of the normal fan to $P$.  
\end{remark}
}

In the case when $G$ is trivial, %the first part of 
the corollary below  is 
Corollary~5.9 in \cite{DKAlgorithm}. % (cf. Corollary~\ref{c:p0}). 
Recall that $\Phi_k$ denotes the lattice points 
in $P$ which lie in the relative interior of a $k$-dimensional face of $P$.

\begin{corollary}\label{c:onedown}
For $d \ge 4$,
\[
e^{d - 2, 1}_G (X^\circ) = (-1)^{d - 1} \det(\rho) \cdot [ \phi_{d - 1} -  \chi_{\lef \Phi_{d - 1} \rig}].
\]
\end{corollary}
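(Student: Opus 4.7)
The plan is to isolate $e^{d-2,1}_G(X^\circ)$ as the sole unknown in the sum computed by Theorem~\ref{t:xycharacteristic} at $p = d-2$, by checking that every other summand $e^{d-2,q}_G(X^\circ)$ has already been determined in the paper.

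First I would specialize Theorem~\ref{t:xycharacteristic} to $p = d-2$, which produces
\[
\sum_{q} e^{d-2,q}_G(X^\circ) = -\rho + (-1)^{d-1}\det(\rho) \cdot \phi_{d-1}.
\]
Next I would separate the terms $e^{d-2,q}_G(X^\circ)$ on the left according to whether they are forced by already-proven results. For each $q \ge 2$ one has $(d-2)+q > d-1$, so the Gysin description \eqref{e:Gysin} applies and the term vanishes unless $q = d-2$. The hypothesis $d \ge 4$ is precisely what is needed to place the diagonal index $q = d-2$ in this range; when it does, \eqref{e:Gysin} evaluates it to $-\rho$. Finally, the $q = 0$ term is handled by Corollary~\ref{c:zerosmooth}, which identifies $e^{d-2,0}_G(X^\circ)$ with $(-1)^{d-1}\det(\rho) \cdot \chi_{\lef \Phi_{d-1} \rig}$.

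Substituting these known values and cancelling the $-\rho$ contributions on both sides reduces the identity to a single linear equation in $e^{d-2,1}_G(X^\circ)$, which solves to give the formula in the statement. The argument has no real obstacle; the main thing to get right is the sharp indexing at $d = 4$, where $e^{d-2,d-2}_G(X^\circ)$ barely sits in the Gysin range. For $d = 3$ one would have $p+q = d-1$ instead of $p+q > d-1$ at this diagonal term, so \eqref{e:Gysin} would fail to pin it down and the same bookkeeping would leave two unknowns rather than one, which is why the statement excludes that case.
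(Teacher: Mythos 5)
Your proposal is correct and follows exactly the paper's own proof: specialize Theorem~\ref{t:xycharacteristic} at $p = d-2$, use \eqref{e:Gysin} to kill the terms with $q \ge 2$ except the diagonal term $e^{d-2,d-2}_G(X^\circ) = -\rho$ (valid precisely when $d \ge 4$), and plug in Corollary~\ref{c:zerosmooth} for the $q=0$ term. Your closing remark about $d=3$ is also consistent with the paper's Remark~\ref{r:onedown}, which notes that in that case the identity only survives at the level of primitive cohomology.
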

\begin{proof}
By Theorem~\ref{t:xycharacteristic} and \eqref{e:Gysin}, 
\[
e^{d - 2, 0}_G (X^\circ) + e^{d - 2, 1}_G (X^\circ)  + e^{d - 2, d - 2}_G (X^\circ) = - \rho + (-1)^{d - 1} \det(\rho) \cdot \phi_{d - 1}. 
\]
Since $e^{d - 2, d - 2}_G (X^\circ) = - \rho$ by  \eqref{e:Gysin}, the result follows from Corollary~\ref{c:zerosmooth}. 
\end{proof}

\begin{remark}\label{r:onedown}
When $d = 3$, the above proof shows that Corollary~\ref{c:onedown} holds provided one only considers the contribution  to $e^{1, 1}_G (X^\circ)$ from primitive cohomology. 
\end{remark}

\begin{corollary}\label{c:real}
With the notation above and for
$d \ge 3$, 
the primitive part of the $G$-representation $H^{d- 2,1}(X')$ equals 
\[
\det(\rho) \cdot [ \phi_{d - 1} -  \chi_{\lef \Phi_{d - 1} \rig}] + \theta(d - 2).
\]
\end{corollary}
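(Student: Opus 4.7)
The plan is to mirror the proof of Corollary~\ref{c:compute2} by extracting the coefficient of $u^{d-2}v$ from both sides of \eqref{e:coffee}, then identifying which part of the result is primitive. A preliminary degree estimate shows that, since $E_{G_{\tau'}}(T_{\tau',f})$ consists of $(uv)^j$-terms and the $u$- and $v$-degrees of $E_{G_{\tau'}}(X\cap T_{f(\tau')})$ are bounded by $\dim(X\cap T_{f(\tau')})=\dim f(\tau')-1$, only cones $\tau'$ with $\dim\tau'\le 1$ contribute. The term from $\tau'=\{0\}$ is $e^{d-2,1}_G(X^\circ)$, which by Corollary~\ref{c:onedown} equals $(-1)^{d-1}\det(\rho)[\phi_{d-1}-\chi_{\lef\Phi_{d-1}\rig}]$; for $d=3$, Remark~\ref{r:onedown} supplies the primitive-cohomology refinement.

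For contributions with $\dim\tau'=1$, write $Q=f(\tau')$, $k=\dim\tau_Q$, so that $m=k-1$ and $\dim Q=d-k$. The $u^{d-2}v$-coefficient of $E_{G_{\tau'}}(X\cap T_Q)\cdot E_{G_{\tau'}}(T_{\tau',f})$ is a sum over $j\in\{0,1\}$ of the $u^{d-2-j}v^{1-j}$-coefficient of $E_{G_{\tau'}}(X\cap T_Q)$ times the $(uv)^j$-coefficient of $E_{G_{\tau'}}(T_{\tau',f})$. The $j=0$ term vanishes: either $d-2>\dim(X\cap T_Q)$ when $k\ge 2$, or $k=1$ and Step~1 of Section~\ref{s:ehd} applied to the lower-dimensional hypersurface gives $e^{d-2,1}_{G_Q}(X\cap T_Q)=e^{d-1,2}_{G_Q}(T_Q)=0$ by the torus formula of Example~\ref{e:tori}. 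The $j=1$ term requires $m\ge 1$ (so $k\ge 2$) and $\dim Q\ge d-2$ (so $e^{d-3,0}_{G_Q}(X\cap T_Q)\ne 0$), forcing $\dim Q=d-2$; Corollary~\ref{c:zero} then yields $e^{d-3,0}_{G_Q}(X\cap T_Q)=(-1)^{d-3}\det(\rho_Q)\chi^*_Q$, while the $(uv)^1$-coefficient of $E_{G_{\tau'}}(T_{\tau',f})$ equals $1$. Summing over orbits of rays $\tau'$ in the relative interiors of the $2$-dimensional cones $\tau_Q$ for $\dim Q=d-2$, and grouping by $G$-orbits of $Q$, produces the term $(-1)^{d-1}\theta(d-2)$.

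Combining the two contributions gives
\[
e^{d-2,1}_G(X')=(-1)^{d-1}\det(\rho)\bigl[\phi_{d-1}-\chi_{\lef\Phi_{d-1}\rig}\bigr]+(-1)^{d-1}\theta(d-2),
\]
and since $X'$ is smooth and complete (Example~\ref{e:smooth}), $H^{d-2,1}(X')=\det(\rho)[\phi_{d-1}-\chi_{\lef\Phi_{d-1}\rig}]+\theta(d-2)$. To identify the primitive part, for $d\ge 4$ I would observe that in the Lefschetz decomposition of $H^{d-1}(X')$ the non-primitive piece of $H^{d-2,1}(X')$ equals $L\cdot H^{d-3,0}(X')$, which vanishes by Corollary~\ref{c:zerosmooth} (since $0<d-3<d-1$); hence $H^{d-2,1}(X')$ is entirely primitive. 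For $d=3$, where $H^{d-2,1}(X')$ is itself middle cohomology, one argues in parallel: Remark~\ref{r:onedown} extracts the primitive portion of $e^{1,1}_G(X^\circ)$ as $\det(\rho)[\phi_2-\chi_{\lef\Phi_2\rig}]$, while the boundary contribution $\theta(1)$ comes from the middle cohomology of the curves $X\cap T_Q$ over edges $Q$ and is therefore naturally assigned to the primitive part.

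The main obstacle is the bookkeeping in the $\dim\tau'=1$ case: showing that only $(d-2)$-dimensional faces $Q$ contribute requires simultaneously exploiting the bidegree constraints, the Step~1 torus-Gysin vanishing, and Corollary~\ref{c:zero}. The case $d=3$ is delicate because $H^{d-2,1}$ coincides with middle cohomology, and one must invoke Remark~\ref{r:onedown} and be careful to assign the $\theta(1)$ boundary strata contribution to the primitive part rather than to the non-primitive piece computed in Corollary~\ref{c:compute2}.
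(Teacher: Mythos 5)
Your argument is the same as the paper's: extract the coefficient of $u^{d-2}v$ from \eqref{e:coffee}, evaluate the contributions of rays lying over $(d-2)$-dimensional faces by Corollary~\ref{c:zero} to get $(-1)^{d-1}\theta(d-2)$, and evaluate the contribution of the dense orbit by Corollary~\ref{c:onedown} and Remark~\ref{r:onedown}. For $d\ge 4$ your write-up is correct and in fact supplies the degree bookkeeping that the paper leaves implicit; the only real difference is that you certify full primitivity of $H^{d-2,1}(X')$ via the Lefschetz decomposition together with $H^{d-3,0}(X')=0$, whereas the paper's primitive cohomology is the cokernel of restriction from the ambient toric variety, for which the direct argument is that $H^{d-1}(\P)$ is of Hodge--Tate type and therefore meets $H^{d-2,1}(X')$ trivially once $d-2\neq 1$.

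The case $d=3$ is where a gap remains, and your text is internally inconsistent about it. You claim the $j=0$ terms always vanish because $e^{d-2,1}_{G_{\tau'}}(X\cap T_Q)=e^{d-1,2}_{G_{\tau'}}(T_Q)=0$; but when $d=3$ and $Q$ is a facet this equals $e^{2,2}_{G_{\tau'}}(T_Q)=1\neq 0$, so rays over facets do contribute, and your displayed identity for $e^{d-2,1}_G(X')$ is false at $d=3$: the correct coefficient identity there is
\[
e^{1,1}_G(X')=e^{1,1}_G(X^\circ)+\chi_{\lef S(\Sigma)\rig}+\theta(1),
\qquad
e^{1,1}_G(X^\circ)=-\rho+\det(\rho)\cdot\bigl[\phi_{2}-\chi_{\lef\Phi_{2}\rig}\bigr],
\]
so your formula omits $\chi_{\lef S(\Sigma)\rig}-\rho$. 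Your closing paragraph concedes the point and then ``assigns'' $\theta(1)$ to the primitive part, but assignment is not an argument: what must actually be proved at $d=3$ is that the classes counted by $\theta(1)$ --- which arise because the toric divisors attached to edge-interior rays meet $X'$ in several components --- lie in the cokernel of $H^{2}(\P)\to H^{2}(X')$ rather than in its image. This is a genuine subtlety, not a formality: taking Corollary~\ref{c:compute2} and Corollary~\ref{c:real} at face value for $d=3$ double counts $\theta(1)$, since the sum of the two stated answers exceeds the full coefficient of $uv$ in $E_G(X')$ displayed above by exactly $\theta(1)$, so the split between image and cokernel cannot be read off from the coefficient comparison alone. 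Neither your proposal nor the paper's one-line appeal to Remark~\ref{r:onedown} settles this; on that point you have reproduced the paper's proof, gap included.
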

\begin{proof}
If we compare coefficients of $u^{d - 2}v$ on both sides  of \eqref{e:coffee}, we obtain 
%the following expression for $e^{d - 2, 1}_G(X')$
\[
%e^{d - 2, d - 2}_G(X') = 
e^{d - 2, 1}_G(X') = e^{d - 2, 1}_G(X^\circ) +  %\chi_{\lef \Phi_{1} \rig} + 
 \sum_{\substack{[\tau'] \in \Sigma /G \\ \dim \tau' = 1, \dim f(\tau') = d  - 2  }} \Ind_{G_{\tau'}}^G e^{d - 3,0}_{G_{\tau'}}(X \cap T_{f(\tau')}).
\]
By Corollary~\ref{c:zero}, the latter term in the above sum is $(-1)^{d - 1}\theta(d - 2)$. The result now follows from Corollary~\ref{c:onedown} and Remark~\ref{r:onedown}. 
\end{proof}

\excise{

Next assume $0 < p < d - 1$. Arguing as in \ref{step2},  let  $\P = \P(\Sigma)$  be a smooth, complete %projective 
toric variety with a  $G$-action via toric morphisms, admitting a $G$-invariant, proper, birational morphism $f: \P \rightarrow Y = Y(\triangle)$, and let $X'$ denote the (smooth, $G$-invariant) closure of $X^\circ$ in $\P$. By Proposition~\ref{p:induced}, 
\[
E_G (X'; u,v) = \sum_{\tau' \in \Sigma /G} \Ind_{G_{\tau'}}^G E_{G_{\tau'}}(X' \cap T_{\tau'}),
\]
where  $\Sigma/G$ denotes the set of orbits of $G$ acting on the cones in $\Sigma$, $\tau'$ denotes a representative of an orbit, and $G_{\tau'}$ denotes the isotropy group of $\tau'$. 
Observe that $e^{d - 1 - p, d - 1}_{G_{\tau'}}(X' \cap T_{\tau'}) = 0$ if $\dim \tau' > 0$, since $\dim (X' \cap T_{\tau'}) < d - 1$. Also, $e^{d - 1 - p, d - 1}_{G}(X^\circ) = 0$ by \eqref{e:Gysin}, and hence 
$e_G^{d - 1 - p, d - 1} (X') = 0$. By Example~\ref{e:smooth}, $E_G (X'; u,v) = (uv)^{d - 1}E_G (X'; u^{-1},v^{-1})$, and we see that $e_G^{p, 0} (X') = 0$
\end{proof}

With the notation of Section~\ref{toric}, let  $\P = \P(\Sigma)$  be a smooth, complete %projective 
toric variety with a  $G$-action via toric morphisms, admitting a $G$-invariant, proper, birational morphism $f: \P \rightarrow Y = Y(\triangle)$, and let $X'$ denote the (smooth, $G$-invariant) closure of $X^\circ$ in $\P$. 
By Proposition~\ref{p:induced}, 
\[
E_G (X'; u,v) = \sum_{\tau' \in \Sigma /G} \Ind_{G_{\tau'}}^G E_{G_{\tau'}}(X' \cap T_{\tau'}),
\]
where  $\Sigma/G$ denotes the set of orbits of $G$ acting on the cones in $\Sigma$, $\tau'$ denotes a representative of an orbit, and $G_{\tau'}$ denotes the isotropy group of $\tau'$. 
For every cone $\tau'$ in $\Sigma$, let $\tau = \tau_Q$ denote the smallest cone in the normal fan $\triangle$ containing $\tau'$, for some face $Q$ of $P$, and write $f(\tau') = Q$. Since $G_{\tau'}$ is a subgroup of the isotropy group of $Q$ in $P$, it follows from \eqref{e:resolution} and the multiplicative property of equivariant Hodge-Deligne polynomials that 
\[
E_G (X'; u,v) = \sum_{\tau' \in \Sigma /G} \Ind_{G_{\tau'}}^G [E_{G_{\tau'}}(X \cap T_{f(\tau')})E_{G_{\tau'}}( T_{\tau',f})],
\]
where $T_{\tau',f} := \Spec \C[M_{\tau'}/M_{\tau}]$. We conclude, using Remark~\ref{r:induction} and  Example~\ref{e:tori}, that
\[
E_G (X'; u,v) = E_G(X^\circ;u,v) + \alpha(u,v),
\]
where $\alpha(u,v) \in R(G)[u,v]$ is known by induction on dimension. Since $X'$ is smooth and complete,  Example~\ref{e:smooth} implies that  
$E_G (X'; u,v) = (uv)^{d - 1} E_G (X'; u^{-1},v^{-1})$, and hence we know the difference 
$E_G(X^\circ;u,v) - (uv)^{d - 1}E_G(X^\circ;u^{-1},v^{-1})$.  By Step~$1$, we know $e^{p,q}_G(X^\circ)$ for $p + q > d - 1$, and hence we deduce $e^{p,q}_G(X^\circ)$ for $p + q < d - 1$. 

}

%%%%%%%%%%%%%%%%%%%%%%%%%%%%%%%%%%%%%%%%%%%%%%%
%%%%%%%%%%%%%%%%%%%%%%%%%%%%%%%%%%%%%%%%%%%%%%%
%%%%%%%%%%%%%%%%%%%%%%%%%%%%%%%%%%%%%%%%%%%%%%%
%%%%%%%%%%%%%%%%%%%%%%%%%%%%%%%%%%%%%%%%%%%%%%%
%%%%%%%%%%%%%%%%%%%%%%%%%%%%%%%%%%%%%%%%%%%%%%%
\section{Applications for simple polytopes}\label{s:cohomology}
%%%%%%%%%%%%%%%%%%%%%%%%%%%%%%%%%%%%%%%%%%%%%%%
%%%%%%%%%%%%%%%%%%%%%%%%%%%%%%%%%%%%%%%%%%%%%%%
%%%%%%%%%%%%%%%%%%%%%%%%%%%%%%%%%%%%%%%%%%%%%%%
%%%%%%%%%%%%%%%%%%%%%%%%%%%%%%%%%%%%%%%%%%%%%%%

In this section, we specialize to the case when $P$ is a simple polytope. 
%apply our results to deduce formula for the representations of a finite group on the cohomology of an invariant,  non-degenerate hypersurface in a projective toric variety with at worst quotient singularities.  
We continue with the notation of Section~\ref{s2} and Section~\ref{s:ehd}. In the case when $G$ is trivial, these results are due to
Danilov and Khovanski{\u\i}  \cite{DKAlgorithm}.

We assume throughout this section that $P$ is \define{simple}. That is, we assume that every 
 vertex of $P$ is adjacent to precisely $d$ facets. 
%The lattice polytope $P$ is \define{simple} if every vertex is adjacent to precisely $d$ facets. 
Equivalently, $P$ is simple if and only if the  toric variety $Y$ corresponding to the normal fan of $P$ has at worst quotient singularities. Let $X^\circ \subseteq T$ be a $G$-invariant, non-degenerate hypersurface with Newton polytope $P$, and let $X = X_P$ be the closure of $X^\circ$ in $Y$. If $P$ is simple, then  $X$ itself has at worst quotient singularities, and hence $H^k X = \bigoplus_{p + q = k} H^{p,q}(X)$ admits a pure Hodge structure of weight $k$. % (cf. Example~\ref{e:smooth}). 
%Moreover, we have isomorphisms of $G$-representations $H^{p,q}(X) \cong H^{q,p}(X)$ (Remark~\ref{r:symmetry}). 
The Lefschetz hyperplane theorem implies that the restriction map
$H^k(Y) \rightarrow H^k(X)$
is a $G$-equivariant isomorphism for $k < d - 1$, and an injection for $k = d - 1$. Since 
 Poincar\'e duality induces isomorphisms of $G$-representations $H^{p,q}(X) \cong H^{d - 1 - p, d - 1 - q}(X)$
  (Example~\ref{e:smooth}), Example~\ref{e:toric} implies that in order to understand the action of $G$ on $H^* X$, it remains to compute the $G$-representations
  \[
H^{d - 1}_{\prim} X = \bigoplus_p H^{p, d - 1 - p}_{\prim} (X) := \coker[
H^{d - 1} Y \rightarrow H^{d - 1} X].
\]
 In fact, since 
  we have isomorphisms of $G$-representations $H^{p,q}(X) \cong H^{q,p}(X)$ (Remark~\ref{r:symmetry}), it is enough to compute  $H^{p, d - 1 - p}_{\prim} (X)$ for $p \ge \frac{d - 1}{2}$. 

For any face $Q$ of $P$, let $G_Q$ denote the isotropy subgroup of $Q$. 
As in Section~\ref{s2}, let $M^Q$ be a translate of the intersection of the affine span of $Q$ with $M'$ to the origin, with corresponding representation $\rho_Q: G_Q \rightarrow GL(M^Q)$.  In the case when $G$ is trivial, the theorem below is proved in Section~5.5 of \cite{DKAlgorithm}. 

\begin{theorem}\label{t:simple}
If $P$ is simple and $p \ge \frac{d - 1}{2}$, then
\[
H^{p, d - 1 - p}_{\prim}(X) = \sum_{[Q] \in P/G} (-1)^{d - \dim Q} 
\Ind_{G_Q}^{G} [ \det(\rho_Q) \cdot \phi_{Q, p + 1} ], 
\]
where $P/G$ denotes the set of $G$-orbits of faces of $P$. 
%$\rho_Q: G_Q \rightarrow GL(M^Q)$, and $G_Q$ is the isotropy group of $Q$. Here $Q$ is the representative of a $G$-orbit of faces.  \comment{need good notation}
\end{theorem}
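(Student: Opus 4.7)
The plan is to combine additivity of the equivariant Hodge-Deligne polynomial with Theorem~\ref{t:xycharacteristic} applied to each torus stratum of $X$, and then isolate the primitive part using the Lefschetz hyperplane theorem together with the pure toric Hodge structure of $Y$.

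First, I would stratify $X = \coprod_{[Q] \in P/G}(X \cap T_Q)$ by the torus orbits of $Y$ (indexed by faces of $P$). By Remark~\ref{r:induction}, each intersection $X \cap T_Q$ is a non-degenerate hypersurface in $T_Q$ with Newton polytope $Q$, so Proposition~\ref{p:induced} combined with Theorem~\ref{t:xycharacteristic} applied in dimension $\dim Q - 1$ yields
\[
\sum_q e^{p,q}_G(X) \;=\; A_p \;+\; \sum_{[Q] \in P/G} (-1)^{\dim Q - 1}\Ind_{G_Q}^G\bigl[\det(\rho_Q)\cdot \phi_{Q,\, p+1}\bigr],
\]
where $A_p := \sum_{[Q]}(-1)^{\dim Q - 1 - p}\Ind_{G_Q}^G \wedge^{\dim Q - 1 - p}\rho_Q$.

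Next, I would identify $A_p$ by running the same torus-orbit stratification on $Y$ itself: reading off the coefficient of $(uv)^{p+1}$ in $E_G(Y;u,v)$ via Proposition~\ref{p:induced} and Example~\ref{e:tori} shows that $A_p = H^{p+1, p+1}(Y)$ (the sign exponents agree modulo $2p+2$). Since $P$ is simple, both $Y$ and $X$ have only quotient singularities, $H^{\ast}(Y)$ is of pure Hodge type $(k,k)$, and Poincar\'e duality on the $d$-dimensional $Y$ gives $H^{p+1, p+1}(Y) \cong H^{d-p-1, d-p-1}(Y)$; meanwhile the Lefschetz hyperplane theorem combined with Poincar\'e duality on $X$ (Example~\ref{e:smooth}) gives $H^{p,p}(X) \cong H^{d-1-p, d-1-p}(Y)$ for $p > (d-1)/2$. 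Chaining these identifies $A_p = H^{p,p}(X)$ when $p > (d-1)/2$, and $A_p = H^{p,p}(Y)$ (the image of the Lefschetz map inside $H^{p,p}(X)$) when $p = (d-1)/2$ with $d$ odd.

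Finally, since $X$ has quotient singularities, $\sum_q e^{p,q}_G(X) = \sum_q (-1)^{p+q} H^{p,q}(X)$, and the Lefschetz/Poincar\'e-duality argument above forces $H^{p,q}(X) = 0$ except for $q = p$ and $q = d-1-p$. Hence this sum collapses to $H^{p,p}(X) + (-1)^{d-1} H^{p, d-1-p}(X)$, where the two Hodge terms coincide on the nose when $p = (d-1)/2$. Subtracting the identification of $A_p$ from both sides, and using that $H^{p, d-1-p}_{\prim}(X) = H^{p, d-1-p}(X)$ for $p > (d-1)/2$ while $H^{p,p}_{\prim}(X) = H^{p,p}(X) - H^{p,p}(Y)$ when $p = (d-1)/2$, isolates the primitive part; rearranging via $(-1)^{d - \dim Q} = (-1)^{d-1}(-1)^{\dim Q - 1}$ yields the stated formula. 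The main obstacle is the bookkeeping at the interface between the cohomologies of $Y$ and $X$, and in particular the case $p = (d-1)/2$ with $d$ odd, where primitive and non-primitive classes lie in the same Hodge bidegree and must be disentangled using the image of the injection $H^{d-1}(Y) \hookrightarrow H^{d-1}(X)$.
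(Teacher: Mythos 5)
Your proposal is correct and follows essentially the same route as the paper: stratify $X$ by torus orbits, apply Proposition~\ref{p:induced} and Theorem~\ref{t:xycharacteristic} to each stratum $X \cap T_Q$, identify the non-primitive contribution (your $A_p$, which is the paper's $\sum_{[Q]} \Ind_{G_Q}^{G} e^{p+1,p+1}_{G_Q}(T_Q)$) with the toric cohomology of $Y$ via Lefschetz and Poincar\'e duality, and subtract to isolate $H^{p,d-1-p}_{\prim}(X)$. The only difference is expository: you make explicit the identification $A_p = H^{p+1,p+1}(Y)$ and the case analysis at $p = \frac{d-1}{2}$, which the paper compresses into its equations (7.1)--(7.2) and the preceding Lefschetz discussion.
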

\begin{proof}
With the notation of Section~\ref{toric}, $X$ admits a $G$-invariant stratification $X = \coprod_{Q \subseteq P} X \cap T_Q$. Hence Proposition~\ref{p:induced} implies that 
\[
E_G(X) = \sum_{[Q] \in P/G} \Ind_{G_Q}^{G} E_{G_Q}(X \cap T_Q). 
\]
%and hence, by \eqref{e:Gysin},  
%\[
%\sum_q e^{p,q}_G(X) = \sum_{[Q] \in P/G} \Ind_{G_Q}^{G} \sum_q e^{p,q}_{G_Q}(X \cap T_Q)
%\]
By the discussion above, $e^{p,q}_G(X) = (-1)^{p + q} H^{p,q}(X) =  0$ unless $p = q$ or $p + q = d - 1$, 
and we compute, using  \eqref{e:Gysin}, 
\begin{equation}\label{e:eqn1}
\sum_q e^{p,q}_G(X) = (-1)^{d - 1} H_{\prim}^{p,d - 1 - p}(X) +  \sum_{[Q] \in P/G} \Ind_{G_Q}^{G} e^{p + 1, p + 1}_{G_Q}(T_Q). 
\end{equation}
On the other hand, Theorem~\ref{t:xycharacteristic} and \eqref{e:Gysin} imply that 
\[
\sum_q e^{p,q}_{G_Q} (X \cap T_Q) = e^{p + 1, p + 1}_{G_Q}(T_Q) + (-1)^{\dim Q - 1} \det(\rho_Q) \cdot \phi_{Q,p + 1},
\]
and we deduce that 
\begin{equation}\label{e:eqn2}
\sum_q e^{p,q}_G(X) = \sum_{[Q] \in P/G} \Ind_{G_Q}^{G} [e^{p + 1, p + 1}_{G_Q}(T_Q) + (-1)^{\dim Q - 1} \det(\rho_Q) \cdot \phi_{Q,p + 1}]. 
\end{equation}
Comparing \eqref{e:eqn1} and \eqref{e:eqn2} now yields the desired result. 
%Moreover, since $p \ge \frac{d - 1}{2}$, the Lefschetz hyperplane theorem and Poincar\'e duality imply that $e^{p,p}_G(X) = e^{d - 1 - p, d - 1 - p}_G(X)$

\end{proof}

\excise{
We also deduce a description of $e^{p,q}_G(X^\circ)$. Observe that since $e^{p,q}_G(X^\circ) = e^{q,p}_G(X^\circ)$ (Remark~\ref{r:symmetry}), and $\sum_q e^{p,q}_G(X^\circ)$ is known by Theorem~\ref{t:xycharacteristic}, it is enough to describe $e^{p,q}_G(X^\circ)$ for $p > q$. 

\begin{corollary}\label{c:simple}
With the notation above, if $P$ is simple and $p > q$, then
\[
e^{p,q}_G (X^\circ) = (-1)^{d + p + q} \sum_{[Q] \in P/G} \sum_{[Q'] \in Q/G_Q} (-1)^{\dim Q'} \Ind_{G_{Q'}}^{G} \phi_{Q', p + 1}.
\]
%where $P/G$ denotes the set of $G$-orbits of faces of $P$, $G_Q$ denotes the isotropy group of a representative $Q$ of a $G$-orbit, and $Q/G_Q$ denotes the set of $G_Q$-orbits of faces of $Q$. 
\end{corollary}
\begin{proof}
It follows from the additivity property of the equivariant Hodge-Deligne polynomial that 
\[
e^{p,q}_G(X^\circ) = \sum_{[Q] \in P/G} (-1)^{d - \dim Q} \Ind_{G_Q}^G e^{p,q}_{G_Q}(
\overline{X \cap T_Q}  ). 
\]
\end{proof}

In order to interpret this result, we recall the following well-known lemma. We include a proof for the convenience of the reader. 

%refer the reader to Lemma~3.1 in \cite{YoEquivariant} for a simple proof. 

\begin{lemma}\label{l:exterior}
Let $G$ be a finite group and let $V$ be an $r$-dimensional representation. Then 
\begin{equation*}%\label{e:exterior}
\sum_{m \ge 0} \Sym^m V t^m =  \frac{ 1}{1 - Vt + \wedge^2 V t^2 - \cdots + (-1)^r \wedge^r V t^r}.
\end{equation*}
Moreover, if an element $g \in G$ acts on $V$ via a matrix $A$, and if $I$ denotes the identity $r \times r$ matrix, then both sides equal $\frac{1}{\det(I - tA)}$ %$1/\det(I - tA)$ 
when the associated characters are evaluated at $g$. 
% (i.e. under the ring homomorphism
%%under the isomorphism 
%$\tr(g; \; ) : R(G)[[t]] \cong \C_{\class}(G)[[t]] \rightarrow \C[[t]]$). 
\end{lemma}
\begin{proof}
The following simple proof was related to me by John Stembridge. If an element $g \in G$ acts of $V$, then, since $g$ has finite order, we may assume, after a change of basis, that $g$ acts via a diagonal matrix
 $(\lambda_1,\ldots, \lambda_r)$. Then both sides of  the equation equal $\frac{1}{(1 - \lambda_1 t)\cdots(1 - \lambda_rt)}$ when evaluated at $g$. 
\end{proof}

We recall the following result from \cite{YoEquivariant}. For any positive integer $m$, let $\chi^*_{mP}$
denote the permutation representation corresponding to the action of $G$ on the interior lattice points
$\Int(mP) \cap M$ of $mP$. 

\begin{corollary}\cite[Corollary~6.6]{YoEquivariant}\label{c:reciprocity}
%If $\phi[t]$ is a polynomial, then 
With the notation above, 
\begin{equation*}%\label{e:interior}
\sum_{m \ge 1} \chi^*_{mP} t^m = \frac{ t^{d + 1}\phi[t^{-1}]}
%{ (-1)^{d + 1}(t  - 1)(t^d - V \, t^{d - 1} + \bigwedge^2 V \,  t^{d - 2}  - \cdots + (-1)^d \bigwedge^d V)}
%\bigwedge^d V - \bigwedge^{d - 1} V t + \cdots + (-1)^{d - 1} V t^{d - 1}+ (-1)^d )}
{(1- t)(1 - \rho \, t + \bigwedge^2 \rho \, t^2 - \cdots + (-1)^{d} \bigwedge^{d} \rho \, t^{d} )}.
\end{equation*}
In particular, if $\phi[t]$ is a polynomial, then $\phi_d = \chi^*_P$. 
\end{corollary}
}

%We deduce the following corollary. 
%The following corollary may also be deduced from Corollary~\ref{c:zero}. 
The first statement in the corollary below also follows from Corollary~\ref{c:zerosmooth}.

\begin{corollary}\label{c:genus}
With the notation above, if $P$ is simple, then 
\[
H^{d - 1,0}(X) = \det(\rho) \cdot \chi_P^*. 
\]
In particular, 
\[
\sum_{m \ge 0} H^{d - 1,0}(X_{mP}) t^m = \det(\rho) \cdot  \phi[t] \cdot \sum_{m \ge d + 1} \Sym^{m - d - 1} (\rho + 1) \;  t^{m}. 
\]
\end{corollary}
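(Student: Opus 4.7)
Both assertions follow from results already in place; the proof splits cleanly into a first part (the identity for $H^{d - 1, 0}(X)$) and a second part (the generating series), with the first doing most of the work.

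\emph{First equation.} I would apply Theorem~\ref{t:simple} with $p = d - 1$ (which satisfies $p \ge \tfrac{d - 1}{2}$), presenting $H^{d - 1, 0}_{\prim}(X)$ as an alternating sum over $G$-orbits of faces $Q$ whose $Q$-contribution involves $\phi_{Q, d}$. The crucial observation is that, by Corollary~\ref{c:reciprocity} applied to the $G_Q$-invariant face $Q$, the polynomial $\phi_Q[t]$ has degree at most $\dim Q$, so $\phi_{Q, d}$ vanishes unless $\dim Q = d$, that is, unless $Q = P$; and in that case $\phi_{P, d} = \chi^*_P$ by the same corollary. This collapses the sum to $\det(\rho) \cdot \chi^*_P$. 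To pass from primitive to full cohomology, observe that since the projective toric variety $Y$ has at worst quotient singularities (because $P$ is simple), its rational cohomology is concentrated on the Hodge diagonal, so $H^{d - 1, 0}(Y) = 0$ and the Lefschetz restriction $H^{d - 1, 0}(Y) \to H^{d - 1, 0}(X)$ forces $H^{d - 1, 0}(X) = H^{d - 1, 0}_{\prim}(X)$.

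\emph{Second equation.} I would apply the first equation with $P$ replaced by each dilate $mP$ for $m \ge 1$; since $mP$ inherits simplicity and has the same normal fan (and hence the same ambient toric variety $Y$) as $P$, the entire setup persists and yields $H^{d - 1, 0}(X_{mP}) = \det(\rho) \cdot \chi^*_{mP}$. Summing against $t^m$ and substituting the closed form from Corollary~\ref{c:reciprocity} expresses the left side as a rational function in $\phi[t]$, $(1 - t)$ and $\det(I - \rho t)$. To recognize the displayed factorization, I would apply Lemma~\ref{l:exterior} to $V = \rho \oplus 1$: the block-diagonal identity $\det(I - (\rho \oplus 1)t) = (1 - t)\det(I - \rho t)$ gives $\sum_{k \ge 0} \Sym^k(\rho + 1)\, t^k = 1/[(1 - t)\det(I - \rho t)]$, so $\sum_{m \ge d + 1} \Sym^{m - d - 1}(\rho + 1)\, t^m = t^{d + 1}/[(1 - t)\det(I - \rho t)]$, which together with the $\phi[t]$-factor produces the stated product.

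The main obstacle is the vanishing step in the first part. It hinges on the degree bound on $\phi_Q[t]$, which in turn presupposes that $\phi_Q[t]$ is genuinely a polynomial rather than a formal power series; fortunately Corollary~\ref{c:vanishing} supplies this in our geometric situation, since Remark~\ref{r:induction} implies that each $X \cap T_Q$ is itself a $G_Q$-invariant non-degenerate hypersurface with Newton polytope $Q$. Once the vanishing and the identification $\phi_{P,d} = \chi^*_P$ are in hand, everything else reduces to formal manipulation of generating functions via Corollary~\ref{c:reciprocity} and Lemma~\ref{l:exterior}.
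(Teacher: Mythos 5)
Your treatment of the first identity is correct and is precisely the paper's own argument: Theorem~\ref{t:simple} at $p = d-1$, the vanishing $\phi_{Q,i} = 0$ for $i > \dim Q$ (which, as you rightly note, presupposes polynomiality of $\phi_Q[t]$, supplied by Corollary~\ref{c:vanishing} via Remark~\ref{r:induction}), the identification $\phi_{P,d} = \chi^*_P$ from Corollary~\ref{c:reciprocity}, and the passage from primitive to full cohomology using $H^{d-1,0}(Y) = 0$ (valid for $d \ge 2$, which is implicitly assumed).

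The second half, however, has a genuine gap at the step where you assert that the substitution ``produces the stated product.'' Corollary~\ref{c:reciprocity} does not express $\sum_{m\ge1}\chi^*_{mP}t^m$ as a rational function in $\phi[t]$: it involves the \emph{reversed} polynomial, namely
\[
\det(\rho)\cdot\sum_{m\ge1}\chi^*_{mP}\,t^m
\;=\; \det(\rho)\cdot\frac{t^{d+1}\phi[t^{-1}]}{(1-t)\det(I-\rho t)}
\;=\; \det(\rho)\cdot\Bigl(\sum_{i=0}^{d}\phi_i\,t^{d+1-i}\Bigr)\cdot\sum_{m\ge0}\Sym^m(\rho+1)\,t^m,
\]
whereas the displayed formula carries the factor $\phi[t]\cdot t^{d+1} = \sum_{i}\phi_i\,t^{d+1+i}$. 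The two agree exactly when $\phi[t] = t^d\phi[t^{-1}]$, i.e.\ (Corollary~\ref{c:reflexive}) when $P$ is a translate of a reflexive polytope --- for instance in the Fermat example, where $\phi[t]=1$ --- but not in general. Concretely, take $G$ trivial, $d=2$, and $P$ twice the unit triangle, so $\phi[t] = 1+3t$ and $X_{mP}$ is a smooth plane curve of degree $2m$: the left-hand side is $3t^2 + 10t^3 + 21t^4 + \cdots$, while $\phi[t]\,t^{3}/(1-t)^3 = t^3 + 6t^4 + \cdots$, and these already disagree at $t^2$. So your computation, carried out honestly, proves the identity with $t^{d+1}\phi[t^{-1}]$ in place of $\phi[t]\,t^{d+1}$; it cannot yield the display as printed, because the display as printed is false in general. (For what it is worth, the paper's own proof of this half is the one-line claim that it ``follows from Lemma~\ref{l:exterior} and Corollary~\ref{c:reciprocity},'' which glosses over the same reversal; your route is the intended one, but a correct write-up must flag the discrepancy and state the corrected formula.)
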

\begin{proof}
The first statement is an immediate consequence of Theorem~\ref{t:simple}, using the fact that $\phi_{Q,i} = 0$ for $i > \dim Q$. 
 The second statement follows from 
Lemma~\ref{l:exterior} and Corollary~\ref{c:reciprocity}. 
\end{proof}

\begin{remark}\label{r:genus}
Corollary~\ref{c:genus} and Lemma~\ref{l:permutation} together imply that $\dim H^{d - 1,0}(X)^G = \dim H^{d - 1,0}(X/G)$ equals the number of $G$-orbits of $\Int(P) \cap M$  whose isotropy subgroup is contained in    $\det(\rho)^{-1}(1)$. %$\{ g \in G \mid \det\rho(g) = 1 \}$. 
\end{remark}

\begin{remark}\label{r:reciprocity}
Recall that $P$ corresponds to a projective toric variety $Y$ and ample line bundle $L$, and that we have equality of $G$-representations $H^0 (Y, L^{\otimes m}) = \chi_{mP}$. If we set 
$a(m) = \dim H^{d - 1,0}(X_{mP}/G)$ and $b(m) = \dim H^0 (Y, L^{\otimes m})^G$, then Corollary~5.7 in \cite{YoEquivariant} implies that $a(m)$ and $b(m)$ are  quasi-polynomials in $m$ of degree $d$, with leading coefficient $\frac{\vol P}{|G|}$ and period dividing the  exponent of $G$. Moreover, the quasi-polynomials satisfy the reciprocity relation $a(m) = (-1)^d b(-m)$. 
\end{remark} 

\begin{example}[Fermat hypersurfaces]\label{e:Fermat}
Let $G = \Sym_{d + 1}$ act on $\Z^{d + 1}$ by permuting co-ordinates, and let $P$ be the standard $d$-dimensional simplex with vertices $\{ e_0, \ldots, e_d \}$. Then  $M \cong \Z^{d + 1}/\Z(1, \ldots, 1)$, 
$\rho: G \rightarrow GL(M)$ is the reflection representation, and one verifies that $\phi[t] = 1$
(cf. \cite[Proposition~6.1]{YoEquivariant}). 

In this case, the Fermat hypersurface $X_m = \{ x_0^m + \cdots + x_d^m = 0 \} \subseteq \P^d$ of degree $m$ is a non-degenerate, $G$-invariant  hypersurface corresponding to the polytope $mP$.  We deduce from Corollary~\ref{c:genus} that
\[
H^{d - 1,0}(X_{m}) = \sgn \cdot  \Sym^{m - d - 1} (V),
\]
where $\sgn$ is the $1$-dimensional sign representation, and $\Sym_{d + 1}$ acts on $V = \C^{d + 1}$
by permuting co-ordinates. Moreover, $\dim H^{d - 1,0}(X_{m}/G)$ equals the number of partitions of $m$ with $d + 1$ distinct parts, and $\dim H^0 (\P^d, \mathcal{O}(m))^G$ equals the number of partitions of $m$ with at most $d + 1$ parts. 
 In this case, 
the reciprocity result in Remark~\ref{r:reciprocity} is a classical result on partitions \cite[Theorem~4.5.7]{StaEnumerative}.
% (cf. the introduction in \cite{YoEquivariant}). 
\end{example}

\begin{example}[Fermat curves]\label{e:curves}
Letting $d = 2$ in the example above, we obtain the action of $\Sym_3$ on the Fermat curve
 $C_m = \{ x^m + y^m + z^m = 0 \}$ of degree $m$. If $\zeta$ denotes the $2$-dimensional reflection representation, then the irreducible representations of $\Sym_3$ are $\{ 1, \sgn, \zeta \}$. 
 Using the above results, one explicitly computes that if $\nu_r(m)$ denotes the function with value $1$ if $r | m$, and value $0$ otherwise, then
\[
H^{1,0}(C_{m}) = \frac{(m - 1)(m - 5)}{12} + \frac{\nu_2(m)}{4} + \frac{\nu_3(m)}{3} + 
\]
\[
\left[ \frac{m^2 - 1}{12} - \frac{\nu_2(m)}{4} + \frac{\nu_3(m)}{3} \right] \sgn + 
\left[ \frac{(m - 1)(m - 2)}{6} - \frac{\nu_3(m)}{3}\right] \zeta. 
\] 
In particular, $C_m/G$ is a smooth, rational curve if and only if $m \le 5$ (cf. \cite[Example~1.3]{DLPurity}). 
\end{example}

Our next goal is to determine a formula for $e^{p,q}_G(X^\circ)$ when $P$ is simple.
% We first recall some 
%basic facts about M\"obius functions. 
If $B$ is a finite poset, then
the M\"obius function $\mu_{B}: B \times B \rightarrow \mathbb{Z}$ is defined recursively by,
\[
\mu_{B}(x,y) =  \left\{ \begin{array}{ll}
1 & \textrm{  if  } x = y \\
0 & \textrm{  if  } x > y \\
- \sum_{x < z \leq y} \mu_{B}(z,y) = - \sum_{x \leq z < y} \mu_{B}(x,z) & \textrm{  if  } x < y, 
\end{array} \right.
\]
and satisfies the property (known as `M\"obius inversion') that for any function $h: B \rightarrow A$ to an abelian group $A$, 
\begin{equation}\label{inversion}
h(z) = \sum_{y \le z} \mu_{B}(y,z) g(y), \textrm{ where } g(y) = \sum_{x \leq y} h(x).
\end{equation}
%\comment{reference}

For any face $Q$ of $P$, recall that we have representations $\rho_Q: G_Q \rightarrow GL(M^Q)$, where $G_Q$ denote the isotropy subgroup of $Q$. 

\begin{lemma}\label{l:poset}
Fix an element $g$ in $G$, and let $B$ be the poset of (non-empty) $g$-fixed faces of $P$. Then 
 $\mu_B(Q,P)= (-1)^{d - \dim Q} \det \rho(g) \det \rho_Q(g)$. 
\end{lemma}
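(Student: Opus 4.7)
The plan is to identify $B$ with the face poset of the fixed polytope $P^g := \{x \in P : g \cdot x = x\}$, apply the classical Möbius-function formula for face lattices of polytopes, and then match the resulting sign against the claimed expression. Since $g$ has finite order and $P$ is convex and $g$-invariant, averaging any interior point of $P$ over the orbit of $\langle g \rangle$ produces a point in $P^g$; averaging a full-dimensional set of such points shows that $P^g$ is a polytope of dimension $e := \dim (M_\R)^g$, the dimension of the $g$-fixed affine subspace of $M_\R \times \{1\}$.

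First, I would show that the map $\phi : B \to \mathrm{Faces}(P^g)$, $Q \mapsto Q \cap P^g$, is an order-preserving bijection between $B$ and the set of non-empty faces of $P^g$. In one direction, for any face $F$ of $P^g$, the smallest face $Q(F)$ of $P$ containing $F$ must be $g$-invariant, since $g$ permutes the faces of $P$ and fixes $F$, so the ascending chain $Q(F) \subseteq g \cdot Q(F) \subseteq g^2 \cdot Q(F) \subseteq \cdots$ stabilizes at $Q(F)$. In the other direction, for $Q \in B$, averaging any point in the relative interior of $Q$ over $\langle g \rangle$ (which preserves $\mathrm{relint}\, Q$ setwise, since $g$ fixes $Q$) produces a point of $Q^g \cap \mathrm{relint}\, Q$, so $Q$ is in fact the smallest face of $P$ containing $Q^g$; this makes $\phi$ a bijection with an order-preserving inverse.

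Second, the classical formula for the face lattice of a polytope gives $\mu_B(Q, P) = (-1)^{e - f}$, where $f := \dim Q^g$. To match this with the claimed expression, I would use the observation that the eigenvalues of $\rho(g)$ on $M_\C$ occur in complex-conjugate pairs together with real eigenvalues $\pm 1$, so $\det \rho(g) = (-1)^{\dim V^-}$ where $V^- \subseteq M_\R$ is the $(-1)$-eigenspace of $\rho(g)$. Since $d - e = \dim V^- + 2k$ for some $k \ge 0$, this gives $(-1)^{d - e} = \det \rho(g)$; the identical argument applied to $\rho_Q(g)$ on $M^Q$ gives $(-1)^{\dim Q - f} = \det \rho_Q(g)$. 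Multiplying,
\[
(-1)^{d - \dim Q} \det \rho(g) \det \rho_Q(g) = (-1)^{2d - e - f} = (-1)^{e - f},
\]
as required. The main subtle point is the averaging argument establishing that $Q^g$ meets $\mathrm{relint}\, Q$; once this is in hand, the rest reduces to bookkeeping on signs.
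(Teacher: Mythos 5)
Your proof is correct, and it takes a genuinely different route from the paper's. The paper's argument stays inside the normal fan and leans on the standing hypothesis of this section that $P$ is \emph{simple}: it first shows $(N_Q)_\C \cong (M/M^Q)_\C$ as $G_Q$-representations, so that $\det\rho(g)\det\rho_Q(g) = \det (N_Q)_\C(g)$; it then uses simplicity twice --- to realize $(N_Q)_\C$ as the permutation representation of $g$ on the $\codim Q$ facets containing $Q$, and to identify the $g$-fixed faces containing $Q$ with the subsets $I$ of the set of $g$-orbits $\{V_1,\dots,V_s\}$ of those facets --- and finally re-derives the M\"obius function of the resulting Boolean lattice by checking $(-1)^{d-\dim Q_I}\det\rho(g)\det\rho_{Q_I}(g) = (-1)^{|I|}$ and invoking $\sum_{I \subseteq \{1,\dots,s\}}(-1)^{|I|}=0$. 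You instead transport the entire poset $B$ onto the face lattice of the fixed polytope $P^g$ via $Q \mapsto Q\cap P^g$ (your averaging argument, showing $\mathrm{relint}(Q)\cap P^g \neq \emptyset$ for $g$-fixed $Q$, is precisely what makes this an order isomorphism with inverse $F \mapsto$ smallest face containing $F$), then quote the classical Eulerian property $\mu(F,F')=(-1)^{\dim F'-\dim F}$ of polytope face lattices, and close with the eigenvalue bookkeeping $\det\rho(g)=(-1)^{d-\dim P^g}$, $\det\rho_Q(g)=(-1)^{\dim Q-\dim Q^g}$. The trade-off: your argument never uses simplicity, so it establishes the lemma for arbitrary $G$-invariant lattice polytopes --- strictly more general than what the paper proves, and potentially useful for extending Theorem~\ref{t:simpleint} beyond the simple case (cf.\ the remark following Corollary~\ref{c:p0}); the cost is that you import the nontrivial (though standard) fact that face lattices of polytopes are Eulerian, whereas the paper's computation is self-contained apart from elementary facts about signs of permutations.
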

\begin{proof}
Let $N_Q$ be the sublattice of $N = \Hom(M,\Z)$ spanned by the normal cone to $Q$. We have an isomorphism of lattices $N_Q \cong M/M^Q$ such that if
$g$ acts on $M/M^Q$ via an integer matrix $A$, then $g$ acts on $N_Q$ via the inverse transpose of $A$. 
If $\{ \lambda_i \}$ denote the eigenvalues of $A$, then the eigenvalues of $A^{-1}$ are the conjugates 
$\{ \bar{\lambda_i} \}$. Since $A$ is integer valued, we conclude that $A$ and the inverse transpose of $A$ have the same eigenvalues and hence we have an isomorphism of $G_Q$-representations $(M/M^Q)_\C \cong (N_Q)_\C$. In particular, $\rho = \rho_Q + (N_Q)_\C$ in $R(G_Q)$, and 
$\det \rho(g) \det \rho_Q(g) = \det (N_Q)_\C(g)$.

On the other hand, since $P$ is simple, if $Q$ has codimension $n$ in $P$, then  $Q$ lies in precisely 
$n$ facets $\{ F_1, \ldots, F_n \}$ of $P$, and $(N_Q)_\C$ is the permutation representation induced by the action of $G$ on these facets. 
Let $\{ V_1, \ldots, V_s \}$ denote the set of $g$-orbits of $\{ F_1, \ldots, F_n \}$. 
 %and 
%Then $g$ permutes the vertices $\{ v_i \mid \alpha_i = 0 \}$ 
For any (possibly empty) subset $I \subseteq \{ 1, \ldots , s \}$, let $Q_I$  be the
intersection of the facets $\{ F_j \in V_i \mid i \in I \}$. 
Then the faces $\{ Q_I \mid I \subseteq \{ 1, \ldots , s \} \}$ are precisely the faces of $P$ which contain $Q$ and are fixed by $g$. 
 Since $d - \dim Q_I = \sum_{i \in I} |V_i|$, and 
$\det (N_Q)_\C(g) = (-1)^{\sum_{i \in I} (|V_i| - 1)}$, we conclude that 
$(-1)^{d - \dim Q} \det \rho(g) \det \rho_{Q_I} (g) = (-1)^{|I|}$. The result now follows by induction on $| I |$, and the fact that $\sum_{I \subseteq \{ 1, \ldots , s \}}  (-1)^{|I|} = 0$. 
\end{proof}

We are now ready to compute $e^{p,q}_G (X^\circ)$. Since $e^{p,q}_G (X^\circ) = e^{q,p}_G (X^\circ)$
(Remark~\ref{r:symmetry}), and $\sum_q e^{p,q}_G(X^\circ)$ is computed by Theorem~\ref{t:xycharacteristic}, we may and will assume that $p > q$. Recall that $G_Q$ denotes the isotropy group of a face $Q$ of $P$.  In the case when $G$ is trivial, the theorem below is Theorem~5.6 in \cite{DKAlgorithm}.

\begin{theorem}\label{t:simpleint}
If $P$ is simple and $p > q$, then $(-1)^{d + p + q} e^{p,q}_G (X^\circ)$ equals 
\begin{displaymath}
%e^{p,q}_G (X^\circ) = 
\det (\rho) \cdot \sum_{\substack{[Q] \in P/G \\ \dim Q = p + q + 1}} \Ind_{G_Q}^{G} 
\left[  \det (\rho_Q) \cdot  \sum_{[Q'] \in Q/G_Q} (-1)^{\dim Q'} \Ind_{G_{Q'}}^{G_Q} [\det (\rho_{Q'})  
\cdot \phi_{Q', p + 1}] \right], 
\end{displaymath}
where $P/G$ denotes the set of $G$-orbits of faces of $P$, and
$Q/G_Q$ denotes the set of $G_Q$-orbits of faces of $Q$. 
\end{theorem}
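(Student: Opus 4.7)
The plan is to deduce the formula via a M\"obius inversion on the face poset of $P$, using the additivity of equivariant Hodge-Deligne polynomials, the purity of the cohomology of every face compactification (which holds because $P$ is simple), and Theorem~\ref{t:simple} for the top-weight part.

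For each face $Q$ of $P$, let $X_Q$ denote the closure of $X \cap T_Q$ in the toric variety defined by the normal fan of $Q$. Since $Q$ is simple as a face of the simple polytope $P$, the variety $X_Q$ has at worst quotient singularities by Remark~\ref{r:induction}, so $H^*(X_Q)$ is pure. Applying Proposition~\ref{p:induced} to the stratification $X_Q = \coprod_{Q' \subseteq Q}(X \cap T_{Q'})$ yields
\[
e^{p,q}_{G_Q}(X_Q) = \sum_{[Q'] \in Q/G_Q} \Ind_{G_{Q'}}^{G_Q}\, e^{p,q}_{G_{Q'}}(X \cap T_{Q'}).
\]
For $p > q$, purity together with the fact that the ambient toric variety $Y_Q$ has Hodge numbers supported on the diagonal forces $e^{p,q}_{G_Q}(X_Q) = 0$ unless $p + q = \dim Q - 1$, in which case $e^{p,q}_{G_Q}(X_Q) = (-1)^{\dim Q - 1} H^{p,\dim Q - 1 - p}_{\prim}(X_Q)$; the latter is given by Theorem~\ref{t:simple} applied to $Q$ (the hypothesis $p \ge (\dim Q - 1)/2$ is automatic since $p > q$ and $p + q = \dim Q - 1$).

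I would then fix $g \in G$, let $B$ be the poset of $g$-fixed faces of $P$, and invert the system above on $B$. Taking traces, the identity reads $\tr(g;\, e^{p,q}_{G_Q}(X_Q)) = \sum_{Q' \in B,\, Q' \subseteq Q} \tr(g;\, e^{p,q}_{G_{Q'}}(X \cap T_{Q'}))$, so M\"obius inversion applied at $Q = P$ gives
\[
\tr(g;\, e^{p,q}_G(X^\circ)) = \sum_{Q \in B} \mu_B(Q, P)\, \tr(g;\, e^{p,q}_{G_Q}(X_Q)).
\]
Lemma~\ref{l:poset} identifies $\mu_B(Q, P) = (-1)^{d - \dim Q}\det(\rho)(g)\det(\rho_Q)(g)$, and the vanishing above means only $Q \in B$ with $\dim Q = p + q + 1$ contribute. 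Substituting Theorem~\ref{t:simple}'s formula for $H^{p,\dim Q - 1 - p}_{\prim}(X_Q)$ and reassembling $g$-fixed subfaces into induced representations using Proposition~\ref{p:induced} produces the character of the stated right-hand side at $g$; since this holds for every $g$, the equality of virtual representations follows.

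The main obstacle is the sign and determinant bookkeeping. The factor $(-1)^{d - \dim Q}$ from Lemma~\ref{l:poset}, the factor $(-1)^{\dim Q - 1}$ from converting $e^{p,q}_{G_Q}(X_Q)$ to primitive cohomology, and the factor $(-1)^{\dim Q - \dim Q'}$ from Theorem~\ref{t:simple} applied to $Q$ must combine, using $\dim Q = p + q + 1$, to produce precisely the signs $(-1)^{d + p + q}$ and $(-1)^{\dim Q'}$ of the statement; simultaneously the three layers of determinant characters $\det(\rho)$, $\det(\rho_Q)$, $\det(\rho_{Q'})$ must nest in the correct positions, which is exactly what Lemma~\ref{l:poset} guarantees through the identity $\det(\rho) = \det(\rho_Q) \cdot \det((N_Q)_\C)$ in $R(G_Q)$ established in its proof. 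Translating the resulting character identity back to an equality of virtual representations indexed by $G$-orbits via Proposition~\ref{p:induced} is the most delicate step.
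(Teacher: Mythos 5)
Your proposal is correct and follows essentially the same route as the paper's own proof: stratify $X_Q$ by torus orbits and apply Proposition~\ref{p:induced}, invert over the poset of $g$-fixed faces using Lemma~\ref{l:poset}, use the Lefschetz/purity vanishing to isolate faces with $\dim Q = p+q+1$, and substitute Theorem~\ref{t:simple} for the primitive middle cohomology of each $X_Q$. The sign and determinant bookkeeping you flagged as delicate works out exactly as you describe (indeed your observation that $p > q$ and $p+q = \dim Q - 1$ automatically give $p \ge \frac{\dim Q - 1}{2}$ is the point the paper leaves implicit).
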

\begin{proof}
If we fix $g$ in $G$, then by Proposition~\ref{p:induced},
\[
e^{p,q}_G (X)(g) = \sum_{g \cdot Q = Q} e^{p,q}_{G_Q}(X \cap T_Q)(g), 
\]
where $T_Q$ denotes the torus orbit corresponding to $Q$. Let $X_Q$ denote the closure of 
$X \cap T_Q$ in $X$. 
Applying M\"obius inversion
to the poset of $g$-fixed faces of $P$ using Lemma~\ref{l:poset} yields 
\[
e^{p,q}_G (X^\circ)(g) = \sum_{g \cdot Q = Q} (-1)^{d - \dim Q} \det \rho(g) \det \rho_Q(g)
e^{p,q}_{G_Q}(X_Q)(g).
\]
By Proposition~\ref{p:Gysin} and Example~\ref{e:smooth}, the assumption $p > q$ implies that $e^{p,q}_{G_Q}(X_Q) = 0$ unless $p + q = \dim Q - 1$, in which case Theorem~\ref{t:simple} implies that
\[
e^{p,q}_{G_Q}(X_Q)(g) = \sum_{\substack{Q' \subseteq Q \\ g \cdot Q' = Q'}} (-1)^{\dim Q' - 1}
\det \rho_{Q'}(g) \phi_{Q', p + 1}(g). 
\]  
Putting this together yields the theorem. 
%The result follows. 
\end{proof}

We immediately obtain the following corollary (cf. Corollary~\ref{c:zero} and Corollary~\ref{c:zerosmooth}).  Recall that if $G$ acts on a set $S$, then we write $\chi_{\lef S \rig}$ for the corresponding permutation representation. Let $\Phi_k$ denote the lattice points 
in $P$ which lie in the relative interior of a $k$-dimensional face of $P$. 
\begin{corollary}\label{c:p0}
With the notation above, if $P$ is simple, then for any $p > 0$,
\[
e^{p,0}_G (X^{\circ}) = (-1)^{d - 1} \det(\rho) \cdot \chi_{\lef \Phi_{p + 1} \rig},
\]
and 
\[
e^{0,0}_G (X^{\circ}) = (-1)^{d - 1} \det(\rho) \cdot [\chi_{\lef \Phi_{1} \rig} + \chi_{\lef \Phi_{0} \rig} - 1].
\]
\end{corollary}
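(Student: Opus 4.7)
The plan is to specialize Theorem~\ref{t:simpleint} to the case $q = 0$, $p > 0$, and then deduce the $p = 0$ case from Theorem~\ref{t:xycharacteristic} together with symmetry.

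For $p > 0$, applying Theorem~\ref{t:simpleint} gives
\[
(-1)^{d+p} e^{p,0}_G(X^\circ) = \det(\rho) \sum_{\substack{[Q]\in P/G \\ \dim Q = p+1}} \Ind_{G_Q}^{G}\left[\det(\rho_Q) \sum_{[Q']\in Q/G_Q}(-1)^{\dim Q'} \Ind_{G_{Q'}}^{G_Q}[\det(\rho_{Q'}) \cdot \phi_{Q',p+1}]\right].
\]
The key vanishing observation is that by Corollary~\ref{c:reciprocity}, the polynomial $\phi_{Q'}[t]$ has degree at most $\dim Q'$, so $\phi_{Q',p+1} = 0$ unless $\dim Q' \ge p+1$. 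Since $Q' \subseteq Q$ and $\dim Q = p+1$, the only surviving term is $Q' = Q$, and in that case $\phi_{Q,p+1} = \phi_{Q,\dim Q} = \chi^*_Q$. Using $\det(\rho_Q)^2 = 1$ and the evident identification $\sum_{[Q],\,\dim Q = p+1}\Ind_{G_Q}^G \chi^*_Q = \chi_{\lef\Phi_{p+1}\rig}$ (lattice points in the relative interior of $(p+1)$-faces partition into $G$-orbits of faces, each orbit contributing via induction), the formula collapses to $e^{p,0}_G(X^\circ) = (-1)^{d-1}\det(\rho)\cdot \chi_{\lef\Phi_{p+1}\rig}$.

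For $p = 0$, Theorem~\ref{t:simpleint} does not apply. Instead I use Theorem~\ref{t:xycharacteristic} at $p = 0$:
\[
\sum_q e^{0,q}_G(X^\circ) = (-1)^{d-1}\bigwedge^{d-1}\rho + (-1)^{d-1}\det(\rho)\cdot \phi_1.
\]
From the definition \eqref{e:stepC}, $\phi_1 = \chi_P - \rho - 1$, and by the symmetry $e^{0,q} = e^{q,0}$ (Remark~\ref{r:symmetry}) together with the case $p>0$ already established,
\[
\sum_{q>0} e^{0,q}_G(X^\circ) = (-1)^{d-1}\det(\rho)\sum_{k\ge 2}\chi_{\lef\Phi_k\rig} = (-1)^{d-1}\det(\rho)\bigl[\chi_P - \chi_{\lef\Phi_0\rig} - \chi_{\lef\Phi_1\rig}\bigr],
\]
using the decomposition $\chi_P = \sum_{k=0}^{d}\chi_{\lef\Phi_k\rig}$. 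Subtracting and simplifying yields
\[
e^{0,0}_G(X^\circ) = (-1)^{d-1}\bigwedge^{d-1}\rho + (-1)^{d-1}\det(\rho)\bigl[\chi_{\lef\Phi_0\rig} + \chi_{\lef\Phi_1\rig} - \rho - 1\bigr].
\]
The final step is to observe that Lemma~\ref{l:dual} (with $N = M$, $i = 1$) gives $\bigwedge^{d-1}\rho \cong \det(\rho)\cdot \rho$, so the $\bigwedge^{d-1}\rho$ term exactly cancels the $-\det(\rho)\cdot\rho$ contribution, producing the stated formula.

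The main technical point is really just the vanishing argument $\phi_{Q',p+1} = 0$ for $\dim Q' < p+1$, which immediately kills all terms of the nested sum in Theorem~\ref{t:simpleint} except the diagonal one; the rest is bookkeeping with signs, induction of permutation representations, and the duality identity $\bigwedge^{d-1}\rho = \det(\rho)\cdot \rho$.
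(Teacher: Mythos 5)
Your proof is correct and follows essentially the same route as the paper: Theorem~\ref{t:simpleint} plus the vanishing $\phi_{Q',p+1}=0$ for $\dim Q' < p+1$ and $\phi_{Q,\dim Q}=\chi^*_Q$ handle $p>0$, and then Theorem~\ref{t:xycharacteristic}, the symmetry $e^{0,q}_G = e^{q,0}_G$, the identity $\phi_{P,1}=\chi_P-\rho-1$, and Lemma~\ref{l:dual} handle $p=0$. The only point to tighten is that Corollary~\ref{c:reciprocity} is conditional on $\phi_{Q'}[t]$ being a polynomial, which in this geometric situation follows from Corollary~\ref{c:vanishing} applied to the non-degenerate hypersurface $X \cap T_{Q'}$ (via Remark~\ref{r:induction}); the paper accordingly cites both corollaries at that step.
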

\begin{proof}
If we fix $g$ in $G$ and $p > 0$, then Theorem~\ref{t:simpleint} implies that $e^{p,0}_G (X^{\circ})(g)$ equals
\[
\sum_{\substack{g \cdot Q = Q \\ \dim Q = p + 1}} (-1)^{d - p} \det \rho(g) \det \rho_Q(g)
 \sum_{\substack{Q' \subseteq Q \\ g \cdot Q' = Q'}} (-1)^{\dim Q'}
\det \rho_{Q'}(g) \phi_{Q', p + 1}(g). 
\]
Corollary~\ref{c:reciprocity} and Corollary~\ref{c:vanishing} imply that $\phi_{Q', p + 1} = 0$ if $\dim Q' < p + 1$, and $\phi_{Q, p + 1}$ equals the number of $g$-fixed lattice points in the relative interior of $Q$. 
This proves the first statement. For the second statement, Theorem~\ref{t:xycharacteristic} implies that
\[
\sum_q e^{0,q}_G(X^\circ) = (-1)^{d - 1} \bigwedge^{d - 1} \rho + (-1)^{d - 1}\det(\rho) \cdot \phi_{P,1}. 
\]
By Lemma~\ref{l:dual} and the first statement, we obtain
\[
e^{0,0}_G(X^\circ) = (-1)^{d - 1}\det(\rho) \cdot [ \phi_{P,1} + \rho - \sum_{k > 1} \chi_{\lef \Phi_{k} \rig}]. 
\]
By definition, $\phi_{P,1} = \sum_{k} \chi_{\lef \Phi_{k} \rig} - \rho - 1$, and the result follows. 
\end{proof}

\begin{remark}
In the case when $G$ is trivial, the above corollary is Proposition~5.8 in \cite{DKAlgorithm}, and is proved without the assumption that $P$ is simple. It would be interesting to extend the above corollary to the general case (cf. Corollary~\ref{c:zero} and Corollary~\ref{c:zerosmooth}). 
\end{remark}

%%%%%%%%%%%%%%%%%%%%%%%%%%%%%%%%%%%%%%%%%%%%%%%
%%%%%%%%%%%%%%%%%%%%%%%%%%%%%%%%%%%%%%%%%%%%%%%
%%%%%%%%%%%%%%%%%%%%%%%%%%%%%%%%%%%%%%%%%%%%%%%
%%%%%%%%%%%%%%%%%%%%%%%%%%%%%%%%%%%%%%%%%%%%%%%
\section{Applications for simplices}\label{s:simplex}
%%%%%%%%%%%%%%%%%%%%%%%%%%%%%%%%%%%%%%%%%%%%%%%
%%%%%%%%%%%%%%%%%%%%%%%%%%%%%%%%%%%%%%%%%%%%%%%
%%%%%%%%%%%%%%%%%%%%%%%%%%%%%%%%%%%%%%%%%%%%%%%
%%%%%%%%%%%%%%%%%%%%%%%%%%%%%%%%%%%%%%%%%%%%%%%

In this section, we further specialize to the case when $P$ is a simplex, and present an explicit example of the representation of the symmetric group acting on the cohomology of a Fermat hypersurface. 
%apply our results to deduce formula for the representations of a finite group on the cohomology of an invariant,  non-degenerate hypersurface in a projective toric variety with at worst quotient singularities.  
%We continue with the notation of Section~\ref{s2} and Section~\ref{s:cohomology}. 

We continue with the notation of Section~\ref{s2} and Section~\ref{s:cohomology}.
That is, let $X^\circ \subseteq T$ be a $G$-invariant, non-degenerate hypersurface with Newton polytope $P$, and let $X = X_P$ be the closure of $X^\circ$ in the toric variety $Y$ determined by the normal fan to $P$.
Throughout this section, we assume that  $P$ is a \define{simplex} i.e. $P$ has precisely $d + 1$ vertices $\{ v_0, \ldots, v_d \}$. 
%%We will need the following extension of Lemma~\ref{l:poset}. 
%If we set $\dim Q = - 1$ and 
%$ \det (\rho_Q) =  \det (\rho)$ when $Q$ is the empty face, then the proof of the Lemma~\ref{l:poset} extends to a proof of the lemma below. 
%\begin{lemma}\label{l:poset2}
%Fix an element $g$ in $G$, and let $B$ be the poset of (possibly empty) $g$-fixed faces of $P$. Then 
% $\mu_B(Q,P)= (-1)^{d - \dim Q} \det \rho(g) \det \rho_Q(g)$. 
%\end{lemma}
For each face $Q$ of $P$, let $\Pi(Q)$ denote the set of interior lattice points of the 
parallelogram spanned by the vertices $\{ (v_i, 1 ) \mid v_i \in Q \}$ of $Q \times 1$ in $M \oplus \Z$. 
That is,
\[
\Pi(Q) = \{ w \in M \oplus \Z \mid w = \sum_{v_i \in Q} \alpha_i (v_i, 1) \textrm{ for some } 0 < \alpha_i < 1 \}.
\]
We set $\Pi(Q) = \{ 0 \}$ when $Q$ is the empty face. 
Let $u: M \oplus \Z \rightarrow \Z$ denote projection onto the second co-ordinate, and let
$\Pi(Q)_k = \{ w \in \Pi(Q) \mid u(w) = k \}$.
Recall that if $G$ acts on a set $S$, then we write $\chi_{\lef S \rig}$ for the corresponding permutation representation.
The result below is due to Batyrev and Nill in the case when $G$ is trivial \cite[Proposition~4.6]{BNCombinatorial}.

\excise{
%$\Pi(Q)$ (respectively $\Pi(Q)_k$) denote the 
\[
\Pi(Q) = \{ w \in M' \mid w = \sum_{v_i \in Q} \alpha_i (v_i, 1) \textrm{ for some } 0 < \alpha_i < 1 \},
\]
and let 
%Let $u: M' = M \oplus \Z \rightarrow \Z$ denote projection onto the second co-ordinate, and let
$\Pi(Q)_k = \{ v \in \Pi(Q) \mid u(v) = k \}$.
%As a further example, we examine the case when $P$ is a \define{simplex} i.e. $P$ has precisely $d + 1$ vertices $\{ v_0, \ldots, v_d \}$. 
Let $\Pi$ denote the set of interior lattice points of the 
parallelogram spanned by the vertices $\{ (v_i, 1 ) \}$ of $P \times 1$ in $M' = M \oplus \Z$. That is, 
\[
\Pi = \{ w \in M' \mid w = \sum_{i = 0}^d \alpha_i (v_i, 1) \textrm{ for some } 0 < \alpha_i < 1 \}. 
\]
Let $u: M' = M \oplus \Z \rightarrow \Z$ denote projection onto the second co-ordinate, and let
$\Pi_k = \{ v \in \Pi \mid u(v) = k \}$.
Recall that if $G$ acts on a set $S$, then we write $\chi_{\lef S \rig}$ for the corresponding permutation representation.
The result below is due to Batyrev and Nill in the case when $G$ is trivial \cite[Proposition~4.6]{BNCombinatorial}. 
}

\begin{corollary}\label{c:simplex}
With the notation above, if $P$ is a simplex, then $H^{p, d - 1 - p}_{\prim} (X) = \det(\rho) \cdot \chi_{\lef \Pi(P)_{p + 1} \rig}$.
%$\det(\rho) \cdot H^{p, d - 1 - p}_{\prim} (X)$ is isomorphic to the permutation representation induced by the action of $G$ on $\{ v \in \Pi \mid u(v) = p + 1 \}$. 
In particular, 
$H^{d - 1}_{\prim} (X) = \det(\rho) \cdot \chi_{\lef \Pi(P) \rig}$.
\end{corollary}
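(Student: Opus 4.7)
The plan is to combine Theorem~\ref{t:simple}, which computes $H^{p,d-1-p}_{\prim}(X)$ for $p \ge \frac{d-1}{2}$ as an alternating sum over $G$-orbits of faces of $P$ in terms of the virtual representations $\phi_{Q,p+1}$, with Proposition~\ref{p:simplex}, which realises each $\phi_{Q,i}$ for a simplex $Q$ as the explicit permutation representation on $\{v \in \BOX(Q) : u(v) = i\}$. For $p < \frac{d-1}{2}$ the answer will follow from the symmetry in Remark~\ref{r:symmetry} together with a combinatorial involution on $\Pi(P)$.

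Fix $g \in G$ and assume $p \ge \frac{d-1}{2}$. The character formula for induced representations turns Theorem~\ref{t:simple} into
\[
H^{p,d-1-p}_{\prim}(X)(g) \;=\; \sum_{Q:\, gQ=Q}(-1)^{d-\dim Q}\det(\rho_Q)(g)\,\phi_{Q,p+1}(g),
\]
where the sum runs over $g$-fixed faces of $P$. Each face $Q$ is itself a simplex, so I have the disjoint decomposition $\BOX(Q) = \bigsqcup_{Q' \subseteq Q}\Pi(Q')$, where an element is assigned to the face $Q'$ spanned by the vertices with strictly positive barycentric coefficient. A $g$-fixed element of $\Pi(Q')$ forces $gQ' = Q'$, so Proposition~\ref{p:simplex} yields
\[
\phi_{Q,p+1}(g) \;=\; \sum_{Q'\subseteq Q,\, gQ'=Q'}\chi_{\lef\Pi(Q')_{p+1}\rig}(g).
\]

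Substituting and switching the order of summation, the character becomes $\sum_{Q'\text{ fixed}} \chi_{\lef\Pi(Q')_{p+1}\rig}(g)\;\cdot\;\sum_{Q \supseteq Q',\, gQ=Q}(-1)^{d-\dim Q}\det(\rho_Q)(g)$. By Lemma~\ref{l:poset}, the inner sum equals $\det(\rho)(g)\sum_{Q' \le Q \le P}\mu_B(Q,P)$, where $B$ is the poset of $g$-fixed faces, and the defining M\"obius identity collapses this to $\det(\rho)(g)$ if $Q' = P$ and to $0$ otherwise. Only $Q' = P$ survives, giving $H^{p,d-1-p}_{\prim}(X)(g) = \det(\rho)(g)\cdot\chi_{\lef\Pi(P)_{p+1}\rig}(g)$ for $p \ge \frac{d-1}{2}$. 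For $p < \frac{d-1}{2}$, the $G$-equivariant involution $w \mapsto \sum_{i=0}^{d}(v_i,1) - w$ on $\Pi(P)$, which is well defined because $G$ permutes the $(v_i,1)$ and hence fixes their sum, restricts to a $G$-equivariant bijection $\Pi(P)_{p+1} \leftrightarrow \Pi(P)_{d-p}$; combining with Remark~\ref{r:symmetry} extends the formula to all $p$. The ``in particular'' statement then follows by summing over $p$ and using $\Pi(P) = \bigsqcup_{k=1}^{d}\Pi(P)_k$. The main obstacle is the order-swap and M\"obius collapse, but Lemma~\ref{l:poset} has already absorbed the delicate sign bookkeeping into a M\"obius statement, reducing this step to a standard identity.
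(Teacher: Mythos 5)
Your proposal is correct and follows essentially the same route as the paper's own proof: evaluating the character formula of Theorem~\ref{t:simple} at a fixed $g$, using the decomposition $\BOX(Q) = \coprod_{Q' \subseteq Q} \Pi(Q')$ to turn Proposition~\ref{p:simplex} into a sum over $g$-fixed faces, switching the order of summation and collapsing the inner sum via the M\"obius interpretation in Lemma~\ref{l:poset}, and finally using the involution $w \mapsto \sum_{i}(v_i,1) - w$ together with Remark~\ref{r:symmetry} to cover $p < \frac{d-1}{2}$. The only cosmetic difference is that the paper performs the reduction to $p \ge \frac{d-1}{2}$ at the outset rather than at the end.
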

\begin{proof}
Since $G$ permutes the vertices of $P$,
%First observe that 
$\Pi(P)$ admits a $G$-equivariant involution 
\[
\iota: \Pi(P) \rightarrow \Pi(P), \: \; \: w = \sum_{i = 0}^d \alpha_i (v_i, 1) \mapsto \iota(w) = \sum_{i = 0}^d (1 - \alpha_i) (v_i, 1),
\]
satisfying $u(w) + u(\iota(w)) = d + 1$. Since we have an equality of $G$-representations
$H^{p, d - 1 - p}_{\prim} (X) = H^{d - 1 - p, p}_{\prim} (X)$ (Remark~\ref{r:symmetry}), it follows that we may reduce the proof to the case when $p \ge \frac{d - 1}{2}$. 
In this case, for a fixed $g$ in $G$, Theorem~\ref{t:simple} implies that
%and Corollary~\ref{c:vanishing} imply that 
%\begin{equation}\label{e:orbit}
%H^{p, d - 1 - p}_{\prim}(X) = \sum_{[Q] \in P/G} (-1)^{d - \dim Q} 
%F^p H_{c, \prim}^{d - 1}( X^\circ_{[Q]} )/ F^{p + 1} H_{c, \prim}^{d - 1}( X^\circ_{[Q]} ), 
%\end{equation}
%where $P/G$ denotes the set of $G$-orbits of faces of $P$, $X^\circ_{[Q]} = \bigcup_{[Q_i] = [Q]} X \cap T_{Q_i}$, and the primitive cohomology and Hodge filtration on $X^\circ_{[Q]}$ are defined appropriately. Moreover, the $G_Q$-representation $F^p H_{c, \prim}^{d - 1}( X  \cap T_Q )/ F^{p + 1} 
%H_{c, \prim}^{d - 1}( X  \cap T_Q )$ is isomorphic to $\det(\rho_Q) \cdot \phi_{Q, p + 1}$. 
%If we fix an element $g$ in $G$, then it will be enough to compute the value of the character 
%$\tr(g; H^{p, d - 1 - p}_{\prim} (X))$ of $H^{p, d - 1 - p}_{\prim} (X)$ at $g$. By \eqref{e:orbit}, it remains to compute the following sum over all $g$-invariant faces $Q$ of $P$, 
\begin{equation*}%\label{e:reduced}
H^{p, d - 1 - p}_{\prim} (X)(g) = \sum_{g \cdot Q = Q} (-1)^{d - \dim Q} \det \rho_Q(g)\phi_{Q, p + 1}(g).
\end{equation*}

%For each face $Q$ of $P$, let %$\Pi(Q)$ (respectively $\Pi(Q)_k$) denote the 
%\[
%\Pi(Q) = \{ w \in M' \mid w = \sum_{v_i \in Q} \alpha_i (v_i, 1) \textrm{ for some } 0 < \alpha_i < 1 \},
%\]
%and let 
%$\Pi(Q)_k = \{ v \in \Pi(Q) \mid u(v) = k \}$.
 Then Proposition~\ref{p:simplex} implies that 
 \[
 \phi_{Q,p + 1}(g) = \sum_{\substack{Q' \subseteq Q \\  g \cdot Q' = Q' }} \chi_{\lef \Pi(Q')_{p + 1} \rig}(g), 
 \]
 % is the permutation representation 
 %$\sum_{Q' \subseteq Q} \chi_{\lef \Pi(Q')_{p + 1} \rig}$, 
 %where $Q'$ may be the empty face. 
 and hence
 %induced by the action of $G$ on $\{ w \in  \BOX(Q)  \mid u(w) = p + 1 \}$. 
 \begin{align*}
 H^{p, d - 1 - p}_{\prim} (X)(g) &= \sum_{g \cdot Q = Q} (-1)^{d - \dim Q} \det \rho_Q(g) \sum_{\substack{Q' \subseteq Q \\  g \cdot Q' = Q' }} \chi_{\lef \Pi(Q')_{p + 1} \rig}(g) \\
 &= \det \rho(g) \sum_{g \cdot Q' = Q'}   \chi_{\lef \Pi(Q')_{p + 1} \rig}(g)    
 \sum_{\substack{Q' \subseteq Q \\  g \cdot Q = Q}}   (-1)^{d - \dim Q} \det \rho(g) \det \rho_Q(g).
 \end{align*}
 By Lemma~\ref{l:poset}, the final summand in the above expression is $1$ if $Q' = P$ and $0$ otherwise, and the first statement follows. The second statement is immediate.

 \excise{
%We recall the description of the representations $\phi_{Q, p + 1}$ from Proposition~\ref{p:simplex}. 
Let $Q$ be a face of $P$, and set 
\[
 \BOX(Q) = \{ w \in M' \mid  w = \sum_{v_i \in Q} \alpha_i (v_i, 1) \textrm{ for some } 0 \le \alpha_i < 1 \}. 
 \]
 Then Proposition~\ref{p:simplex} implies that $\phi_{Q,p + 1}$ is the permutation representation induced by the action of $G$ on $\{ w \in  \BOX(Q)  \mid u(w) = p + 1 \}$. 
 Hence Lemma~\ref{l:permutation} implies that $\phi_{Q, p + 1}(g)$ equals the number of $g$-fixed lattice points $w$ in $\BOX(Q)$ such that $u(w) = p + 1$. 

Fix a $g$-fixed lattice point  $w = \sum_{i} \alpha_i (v_i, 1)$ in $\BOX(P)$ satisfying $u(w) = p + 1$. We will determine the contribution of $w$ to the right hand side of \eqref{e:reduced}. 
Let $\{ V_1, \ldots, V_s \}$ denote the set of $g$-orbits of the vertices $\{ v_i \mid \alpha_i = 0 \}$ of $P$. %and 
%Then $g$ permutes the vertices $\{ v_i \mid \alpha_i = 0 \}$ 
For any subset $I \subseteq \{ 1, \ldots , s \}$, let $Q_I$  be the face of $P$ obtained by taking the convex hull of  
$\{ v_j \mid v_j \notin \bigcup_{i \in I} V_{i} \}$. In particular, $Q_\emptyset = P$. Then the faces $\{ Q_I \mid I \subseteq \{ 1, \ldots , s \} \}$ are precisely the faces of $P$ which contain $w$ and are fixed by $g$. 
 Since $d - \dim Q_I = \sum_{i \in I} |V_i|$, and 
$\det \rho_{Q_I} (g) = (-1)^{\sum_{i \in I} (|V_i| - 1)} \det \rho(g)$, we conclude that the contribution of $w$ to the right hand side of \eqref{e:reduced}  equals 
\[
\sum_{I \subseteq \{ 1, \ldots , s \}} (-1)^{| I |}  \det \rho(g) =  \left\{\begin{array}{cl} %H^{p+ 1, p + 1}(H_c^{n + p + 1} T) = 
\det \rho(g) & \text{if } s = 0 ; \\ 0 & \text{otherwise}. \end{array}\right. 
\]
We have shown that
\[
\tr(g; H^{p, d - 1 - p}_{\prim} (X)) = \det\rho(g) \cdot  \# \{ w \in \Pi \mid g \cdot w = w, u(w) = p + 1 \}. 
\]
The result now follows immediately using Lemma~\ref{l:permutation}. 
}
\end{proof}

We define $\Pi(r) = %\BOX^r(P) 
 \coprod_{\dim Q = r} \Pi(Q)$ and $\Pi(r)_k %\BOX^r(P)_k 
 = \coprod_{\dim Q = r} \Pi(Q)_k$. 

\begin{corollary}\label{c:simplex2}
With the notation above, if $P$ is a simplex and $p > q$, then
\[
e_G^{p,q}(X^\circ) = (-1)^{d - 1} \det(\rho) \cdot \chi_{\lef %\BOX^{p + q + 1}(P)_{p + 1}
 \Pi(p + q + 1)_{p + 1} \rig}.
%\sum_{\substack{[Q] \in P/G \\ \dim Q = p + q + 1}}
 %\chi_{\lef \Pi_{Q, p + 1} \rig}, 
\]
%where $P/G$ denotes the set of $G$-orbits of faces of $P$. 
\end{corollary}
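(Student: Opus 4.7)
The plan is to mirror the proof of Theorem~\ref{t:simpleint}, but substitute Corollary~\ref{c:simplex} (rather than Proposition~\ref{p:simplex}) as the key input. This is possible because every face of a simplex is itself a simplex, so Corollary~\ref{c:simplex} applies uniformly to each closure $X_{Q'}$ that arises. The advantage over running Theorem~\ref{t:simpleint} directly is that the ``inner'' face sum that Proposition~\ref{p:simplex} would introduce is already packaged inside the primitive cohomology computed by Corollary~\ref{c:simplex}.

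Fix $g \in G$. Additivity (Proposition~\ref{p:induced}) applied to the torus stratification of each $X_Q$ gives
\[
e^{p,q}_{G_Q}(X_Q)(g) = \sum_{\substack{Q' \subseteq Q \\ g \cdot Q' = Q'}} e^{p,q}_{G_{Q'}}(X \cap T_{Q'})(g),
\]
so M\"obius inversion on the poset $B$ of $g$-fixed faces of $P$, combined with the evaluation $\mu_B(Q',P) = (-1)^{d - \dim Q'} \det \rho(g) \det \rho_{Q'}(g)$ from Lemma~\ref{l:poset}, yields
\[
e^{p,q}_G(X^\circ)(g) = \sum_{g \cdot Q' = Q'} (-1)^{d - \dim Q'} \det \rho(g) \det \rho_{Q'}(g)\, e^{p,q}_{G_{Q'}}(X_{Q'})(g).
\]
Since each $X_{Q'}$ has at worst quotient singularities, a short Lefschetz/Poincar\'e-duality argument shows that for $p > q$ the term $e^{p,q}_{G_{Q'}}(X_{Q'})$ vanishes unless $\dim Q' = p + q + 1$, in which case Example~\ref{e:smooth} identifies it with $(-1)^{\dim Q' - 1} H^{p,q}_{\prim}(X_{Q'})$. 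Since $Q'$ is a simplex, Corollary~\ref{c:simplex} then evaluates this primitive piece as $\det(\rho_{Q'}) \cdot \chi_{\lef \Pi(Q')_{p+1} \rig}$.

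Plugging in, the two copies of $\det(\rho_{Q'})$ cancel via $\det(\rho_{Q'})(g)^2 = 1$ and the signs combine as $(-1)^{d - \dim Q'}(-1)^{\dim Q' - 1} = (-1)^{d - 1}$, leaving
\[
e^{p,q}_G(X^\circ)(g) = (-1)^{d - 1} \det \rho(g) \sum_{\substack{\dim Q' = p + q + 1 \\ g \cdot Q' = Q'}} \chi_{\lef \Pi(Q')_{p+1} \rig}(g) = (-1)^{d - 1} \det \rho(g)\, \chi_{\lef \Pi(p + q + 1)_{p + 1} \rig}(g),
\]
where the last step uses that $g$ permutes the disjoint pieces $\{\Pi(Q') : \dim Q' = p + q + 1\}$ compatibly with its action on faces, so that only $g$-fixed pieces contribute fixed points to the trace. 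The character identity then lifts to virtual representations, and we are done. The one step requiring genuine care is the Lefschetz/duality vanishing that localizes the contribution to $\dim Q' = p + q + 1$; everything else is sign bookkeeping and the cancellation of the $\det \rho_{Q'}$ factors.
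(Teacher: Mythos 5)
Your proposal is correct and follows essentially the same route as the paper: the paper's proof likewise recalls the M\"obius-inversion formula from the proof of Theorem~\ref{t:simpleint} (with the Gysin/Poincar\'e-duality vanishing forcing $\dim Q' = p+q+1$ when $p > q$), and then substitutes Corollary~\ref{c:simplex} to evaluate $e^{p,q}_{G_{Q'}}(X_{Q'})(g)$ as $(-1)^{\dim Q'-1}\det\rho_{Q'}(g)\,\chi_{\lef \Pi(Q')_{p+1}\rig}(g)$, with the identical sign and $\det\rho_{Q'}$ cancellations. Your write-up simply makes explicit the steps the paper cites by reference.
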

\begin{proof}
Recall from the proof of Theorem~\ref{t:simpleint} that, for a fixed $g$ in $G$,
\[
e^{p,q}_G (X^\circ)(g) = \sum_{\substack{g \cdot Q = Q \\ \dim Q = p + q + 1}} (-1)^{d - \dim Q} \det \rho(g) \det \rho_Q(g)
e^{p,q}_{G_Q}(X_Q)(g).
\]
%where
%\[
%e^{p,q}_{G_Q}(X_Q)(g) = \sum_{\substack{Q' \subseteq Q \\ g \cdot Q' = Q'}} (-1)^{\dim Q' - 1}
%\det \rho_{Q'}(g) \phi_{Q', p + 1}(g). 
%\]  
By Corollary~\ref{c:simplex}, since $p + q= \dim Q - 1$ and $p > q$, 
\[
e^{p,q}_{G_Q}(X_Q)(g) = (-1)^{\dim Q - 1} H^{p,q}_{\prim} (X_Q) = 
(-1)^{\dim Q - 1} \det \rho_Q(g)\chi_{\lef \Pi(Q)_{p + 1} \rig}(g),
\] 
and the result follows. 
\end{proof}

\begin{remark}\label{r:quotient}
Assume that $P$ is a simplex, and let $H^{d - 1}_{\prim}(X/G)$ 
%= \bigoplus_{p} H^{p, d - 1 - p}_{\prim} (X/G)$ 
denote the subspace of $H^{d - 1}(X/G)$  
 corresponding to  
$H^{d - 1}_{\prim} (X)^G$ under the 
isomorphism $H^*(X/G) \cong H^* (X)^G$, with its corresponding pure Hodge structure. 
 Then Corollary~\ref{c:simplex} and Lemma~\ref{l:permutation} imply that  $\dim H^{p, d - 1 - p}_{\prim} (X/G)$ equals the number of $G$-orbits of $\Pi(P)_{p + 1}$ whose isotropy subgroup is contained in
 $\det(\rho)^{-1}(1)$. % $\{ g \in G \mid \det\rho(g) = 1 \}$. 

Deducing the dimensions of the pieces  of the Hodge structure on the cohomology of $X/G$ then reduces to determining the numbers $\dim H^{2i}(Y)^G$, where $Y$ is the toric variety corresponding to $P$. The latter can be computed using the fact that 
$e^{p,p}_G(Y) = H^{p,p}(Y)$, and using the formula for $E_G(Y)$ 
%In theory these can be computed using 
from Example~\ref{e:toric}. % a purely toric computation. 

%Assume that $P$ is a simplex, and let $H^{d - 1}_{\prim}(X/G) = \bigoplus_{p} H^{p, d - 1 - p}_{\prim} (X/G)$ denote the subspace of $H^{d - 1}(X/G)$  
 %corresponding to  
%$H^{d - 1}_{\prim} (X)^G$ under the 
%isomorphism $H^*(X/G) \cong H^* (X)^G$.

%With the notation above, if $P$ is a simplex, then Corollary~\ref{c:simplex} and Lemma~\ref{l:permutation} imply that  $\dim H^{p, d - 1 - p}_{\prim} (X)^G$ equals the number of $G$-orbits of $\{ v \in \Pi(P) \mid u(v) = p + 1 \}$ whose isotropy subgroup is contained in $\{ g \in G \mid \det\rho(g) = 1 \}$. 
%In particular, $\dim H^{d - 1}_{\prim} (X)^G$ equals the number of $G$-orbits of $\Pi(P)$ whose isotropy subgroup is contained in $\{ g \in G \mid \det\rho(g) = 1 \}$. 

%As an immediate consequence, we  deduce an expression for the dimension of the subspace  $H^{d - 1}_{\prim}(X/G)$ of $H^{d - 1}(X/G)$  
% corresponding to  
%$H^{d - 1}_{\prim} (X)^G$ under the 
%isomorphism $H^*(X/G) \cong H^* (X)^G$. Deducing the dimensions of the graded pieces of the cohomology of $X/G$ then reduces to determining the numbers $\dim H^{2i}(Y)^G$, where $Y$ is the toric variety corresponding to $P$. The latter can be computed using the fact that 
%$e^{p,p}_G(Y) = H^{p,p}(Y)$, and using the formula for $E_G(Y)$ 
%%In theory these can be computed using 
%from Example~\ref{e:toric}. % a purely toric computation. 
\end{remark}

\begin{example}[Fermat hypersurfaces]\label{e:Fermat2}
We continue with the notation of Example~\ref{e:Fermat}. That is, let $G = \Sym_{d + 1}$ act on $\Z^{d + 1}$ by permuting co-ordinates, and let $P$ be the standard $d$-dimensional simplex with vertices $\{ e_0, \ldots, e_d \}$. Then  the Fermat hypersurface $X_m = \{ x_0^m + \cdots + x_d^m = 0 \} \subseteq \P^d$ of degree $m$ is a non-degenerate, $G$-invariant  hypersurface corresponding to the polytope $mP$.  Corollary~\ref{c:simplex} implies that $\sgn \cdot H^{p, d - 1 - p}_{\prim}(X_m)$ is isomorphic to the permutation representation of $\Sym_{d + 1}$ on the set 
\[
\{ (a_0, \ldots, a_d) \in \Z^{d + 1} \mid 0 < a_i < m, \sum_{i = 0}^d a_i = (p + 1)m \}. 
\]
In particular, $\sgn \cdot H^{d - 1}_{\prim}(X_m)$  is isomorphic to the permutation representation of $\Sym_{d + 1}$ on the set 
\begin{equation*}%\label{e:hard}
\{ (a_0, \ldots, a_d) \in (\Z/m\Z)^{d + 1} \mid a_i \ne 0, \sum_{i = 0}^d a_i = 0 \}. 
\end{equation*}
%By standard comparison theorems (see, for example, Section~1 in \cite{KLEquivariant}), this agrees with the representation of $\Sym_{d + 1}$ on the primitive $l$-adic cohomology of $X_m$. In this case, 
%a highly non-trivial proof of this result is due to Br\"unjes \cite[Corollary~11.3]{BruForms}. 
% description of the representation in terms of \eqref{e:hard} is a highly non-trivial result of Br\"unjes
%\cite[Corollary~11.3]{BruForms}. 
%The cohomology of $X/G$ is isomorphic as vector spaces to 
The ring isomorphism $H^*(X_m/G) \cong H^* (X_m)^G$ induces an isomorphism 
\[
H^*(X_m/G) \cong H^*(\P^{d - 1}) \oplus H^{d - 1}_{\prim} (X_m)^G. 
\]
By Remark~\ref{r:quotient}, $\dim H^{d - 1}_{\prim} (X_m)^G$ is equal to the number of partitions 
of multiples of $m$ into $(d + 1)$-distinct parts of size strictly less than $m$. In particular, 
$H^*(X_m/G) \cong H^*(\P^{d - 1})$ for $m < \binom{d + 2}{2}$ (cf. Example~\ref{e:curves}). 
%in the set 
%\[
%\{ (a_0, \ldots, a_d) \in (\Z/m\Z)^{d + 1} \mid \sum_{i = 0}^d a_i = 0, 0 < a_i \ne a_j \textrm{ for all } i < j \}.
%\]

We also obtain a formula for the character of $H^{p, d - 1 - p}_{\prim}(X_m)$. More specifically, if 
$g$ in $\Sym_{d + 1}$ has cycle type $( \lambda_1, \ldots, \lambda_r )$, then, by Lemma~\ref{l:permutation},  
$\tr(g; H^{p, d - 1 - p}_{\prim}(X_m))$ is equal to 
\[
(-1)^{d + 1 - r} 
\# \{ (a_1, \ldots, a_r) \in \Z^{r} \mid 0 < a_i < m, \sum_{i = 0}^d \lambda_i a_i = (p + 1)m \}.
\]
Similarly, we have $\tr(g; H^{d - 1}_{\prim}(X_m))$ equal to 
\[
  (-1)^{d + 1 - r} 
\# \{ (a_1, \ldots, a_r) \in (\Z/m\Z)^{r} \mid a_i \ne 0, \sum_{i = 0}^d \lambda_i a_i = 0 \}.
\]
Alternative formulas for these characters are given by Ch\^enevert \cite[Theorem~2.2, Corollary~2.5]{CheRepresentations}. 
\end{example}

%%%%%%%%%%%%%%%%%%%%%%%%%%%%%%%%%%%%%%%%%%%%%%%
%%%%%%%%%%%%%%%%%%%%%%%%%%%%%%%%%%%%%%%%%%%%%%%
%%%%%%%%%%%%%%%%%%%%%%%%%%%%%%%%%%%%%%%%%%%%%%%
%%%%%%%%%%%%%%%%%%%%%%%%%%%%%%%%%%%%%%%%%%%%%%%
\section{Equivariant mirror symmetry}\label{s:mirror}
%%%%%%%%%%%%%%%%%%%%%%%%%%%%%%%%%%%%%%%%%%%%%%%
%%%%%%%%%%%%%%%%%%%%%%%%%%%%%%%%%%%%%%%%%%%%%%%
%%%%%%%%%%%%%%%%%%%%%%%%%%%%%%%%%%%%%%%%%%%%%%%
%%%%%%%%%%%%%%%%%%%%%%%%%%%%%%%%%%%%%%%%%%%%%%%

In this section, we  conjecture an equivariant version of mirror symmetry for Calabi-Yau hypersurfaces in toric varieties, and prove it in several cases. 
%present several examples to support the conjecture. 
We continue with the notation of Section~\ref{s2} and Section~\ref{s:cohomology}.

Recall that a $d$-dimensional lattice polytope $P$ in $M$ is \emph{reflexive} if the origin is the unique interior lattice point of $P$ and every non-zero lattice point in $M$ lies in the boundary of $mP$ for some positive integer $m$. Equivalently, if $N = \Hom(M, \Z)$, then $P$ is reflexive if and only if its polar polytope
\[
P^* = \{ u \in N_\R \mid \lef u , v \rig \ge -1, \: \forall \,  v \in P \}
\]
is a lattice polytope. Let $Y$ (respectively $Y^*$) denote the projective toric variety corresponding to the 
normal fan of $P$ (respectively $P^*$). Observe that the normal fan of $P$ is equal to the fan over the faces of $P^*$, and vice versa. If $X$ and $X^*$ denote non-degenerate hypersurfaces in $Y$ and $Y^*$ respectively, then $X$ and $X^*$ are \emph{Calabi-Yau varieties} (see, for example, \cite{BDStrong}). %That is, they are Gorenstein varieties with trivial canonical divisors. 
In \cite{BDStrong}, Batyrev and Dais associated \define{stringy invariants} $E_{\st}(X; u,v)$ and $E_{\st}(X^*; u,v)$ to $X$ and $X^*$, 
such that if $X$ admits a crepant resolution $\tilde{X} \rightarrow X$, then $E_{\st}(X) = E(\tilde{X})$. 
More precisely, if $\tilde{X} \rightarrow X$ is a resolution of singularities, then 
$E_{\st}(X)$ is   the %(Hodge-Deligne realization of the) 
motivic integral associated to the relative canonical divisor on
$\tilde{X}$ \cite{BatNon}.  
Batyrev and Borisov proved the following version of mirror symmetry in \cite{BBMirror},
\begin{equation}\label{e:bb}
E_{\st}(X; u,v) = (-u)^{d - 1} E_{\st}(X^*; u^{-1},v). 
\end{equation}
In particular, if there exist crepant resolutions $\tilde{X} \rightarrow X$ and $\tilde{X}^* \rightarrow X^*$,
then %this result asserts that 
\[
\dim H^{p,q}(\tilde{X}) = \dim H^{d - 1 - p,q}(\tilde{X}^*) \: \textrm{  for  } \: 0 \le p,q \le d - 1. 
\]

One may formally extend these definitions to the equivariant setting. More precisely, 
%let $P$ be a $G$-invariant, reflexive polytope, and let $X$ be a $G$-invariant, non-degenerate hypersurface. 
one may define motivic integration for complex varieties with a $G$-action (cf. Section~\ref{s:HodgeDeligne}), and %then 
%If $P$ is a $G$-invariant, reflexive polytope, and $X$ is a $G$-invariant, non-degenerate hypersurface, 
then define $E_{\st, G}(X; u, v) \in R(G)[u,v][[u^{-1},v^{-1}]]$ to be the %(Hodge-Deligne realization of the) 
(equivariant) motivic integral associated to the relative canonical divisor of an equivariant resolution of singularities
$\tilde{X}$ (see \cite{AWEquivariant}). 
%The existence of such a resolution follows from \cite{AWEquivariant} in this case.  
Moreover, if $\tilde{X} \rightarrow X$ is an equivariant, crepant resolution, then  
$E_{\st, G}(X; u, v) = E_{G}(\tilde{X}; u, v)$. 

%We conjecture an equivariant version of this result. 

\begin{conjecture}\label{c:mirror}
Suppose that $G$ acts linearly on a lattice $M$ of rank $d$ via a homomorphism $\rho: G \rightarrow GL(M)$. 
If $P$ and $P^*$ are polar, $G$-invariant, reflexive polytopes, and $X$ and $X^*$ are corresponding $G$-invariant, non-degenerate hypersurfaces, then the \emph{equivariant stringy invariants}
$E_{\st,G}(X; u,v)$ and $E_{\st, G}(X^*; u,v)$ %in $R(G)[u,v]$
 are rational functions satisfying 
\[
E_{\st, G}(X; u,v) = (-u)^{d - 1} \det(\rho) \cdot E_{\st, G}(X^*; u^{-1},v). 
\]
%Moreover, if $X$ admits a $G$-equivariant crepant resolution $\tilde{X} \rightarrow X$, then $E_{\st, G}(X)$ equals the equivariant Hodge-Deligne polynomial $E_G(\tilde{X})$. 
\end{conjecture}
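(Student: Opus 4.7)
The plan is to follow the strategy of Batyrev and Borisov's proof of the non-equivariant identity \eqref{e:bb}, tracking characters of $G$ at every step. First I would set up $E_{\st,G}(X;u,v)$ via equivariant motivic integration against the relative canonical divisor of a $G$-equivariant log resolution $\tilde X \to X$. Because $X^\circ$ is non-degenerate, such a resolution is provided by any $G$-equivariant refinement $\Sigma$ of the normal fan of $P$: the strict transform $X'$ in $\P(\Sigma)$ is smooth, the exceptional locus is a union of $G$-permuted torus orbits, and discrepancies can be read off from the lattice distance of rays in $\Sigma$ to the boundary of $P^*$. Choosing $\Sigma$ so that its rays are exactly the primitive vectors of $N \cap \partial P^*$ that are $G$-orbit representatives, one obtains rationality of $E_{\st,G}(X;u,v)$ and an expression as a $G$-orbit sum over cones in $\Sigma$, which one repackages, via Proposition~\ref{p:induced} and the equivariant toric stratification, into a sum indexed by $[Q] \in P/G$.

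Next, I would use Theorem~\ref{t:xycharacteristic} together with additivity to evaluate, for each face $Q \subseteq P$, the contribution of $X \cap T_Q$, and combine it with the equivariant Hodge-Deligne polynomial of the toric stratum associated to the dual face $Q^* \subseteq P^*$ (computed from Example~\ref{e:tori}). The outcome, after telescoping, is a formula of the form
\[
E_{\st,G}(X;u,v) = \sum_{[Q] \in P/G} \Ind_{G_Q}^G \bigl[\, B_{Q,G_Q}(u,v)\cdot S_{Q^*,G_Q}(u,v)\,\bigr],
\]
where $B_{Q,G_Q}(u,v) \in R(G_Q)[u,v]$ is an equivariant $B$-polynomial built from the $h^*$-series $\phi_{Q,G_Q}[t]$ of Section~\ref{s:ee}, and $S_{Q^*,G_Q}$ is a symmetric stringy factor attached to the dual cone. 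This is the equivariant analogue of Batyrev's stringy formula for hypersurfaces in toric varieties.

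The key input is an equivariant symmetry of $B_{Q,G_Q}$. Since $P$ and $P^*$ are reflexive, each face $Q$ of $P$ is a translate of a $G_Q$-invariant lattice polytope whose $h^*$-polynomial is palindromic by Corollary~\ref{c:reflexive}; combined with Lemma~\ref{l:dual} applied to the exterior algebra of $\rho_Q$, this forces a functional equation of the shape $B_{Q,G_Q}(u,v) = (-u)^{\dim Q}\det(\rho_Q)\cdot B_{Q,G_Q}(u^{-1},v)$. Under the $G$-equivariant polar-duality bijection $Q \leftrightarrow Q^*$, isotropy subgroups are preserved, $G_Q = G_{Q^*}$, and the factor $S_{Q^*,G_Q}$ pairs with the corresponding factor on the $P^*$ side. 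Matching the contributions orbit by orbit and absorbing all the $\det(\rho_Q)$-twists into a single global twist by $\det(\rho)$ yields the claimed identity.

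The hardest step will be the bookkeeping of determinant twists and induced representations. In the classical Batyrev-Borisov proof the reflexive duality produces a clean reindexing of a purely numerical sum; in the equivariant setting each orbit contributes a virtual representation and each face $Q$ carries its own character $\det(\rho_Q)$, so one must verify that the collection $\{\det(\rho_Q)\}_Q$ assembles compatibly under induction from $G_Q$ to $G$ across the face/dual-face pairing. This is essentially an equivariant refinement of the combinatorial identity underlying \eqref{e:bb}, and is the point at which one expects to need the full force of Lemma~\ref{l:dual} and the palindromic property of $\phi_{Q,G_Q}[t]$, together with a careful orbit-by-orbit character computation.
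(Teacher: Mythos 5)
The statement you are addressing is stated in the paper as Conjecture~\ref{c:mirror}: the paper does \emph{not} prove it in general, but only in two special cases (Corollary~\ref{c:smoothmirror}, when $X$ and $X^*$ are smooth, via the Lefschetz theorem, Proposition~\ref{p:reflexive} and Theorem~\ref{t:simple}; and Corollary~\ref{c:dim3}, when $d \le 4$ and $G$-equivariant, crepant, toric resolutions exist, via the explicit computations of Corollaries~\ref{c:zerosmooth}, \ref{c:compute2} and \ref{c:real}). Your proposal attempts the full statement, and it breaks down exactly at the points that make this a conjecture rather than a theorem.

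Concretely, there are two genuine gaps. First, the resolution you build is not available equivariantly: you propose a fan $\Sigma$ refining the normal fan of $P$ whose rays come from the lattice points of $\partial P^*$, so that crepancy and discrepancies can be read off face by face. But as the paper remarks immediately after Conjecture~\ref{c:mirror}, a $G$-equivariant, crepant, toric morphism $\tilde{Y} \rightarrow Y$ with $\tilde{Y}$ having at worst orbifold singularities need not exist; a $G$-invariant fan must contain whole $G$-orbits of cones, and there is in general no $G$-invariant triangulation of $\partial P^*$ of the required kind. This is precisely why $E_{\st,G}$ must be defined by equivariant motivic integration on an arbitrary equivariant resolution, and why even the \emph{rationality} of $E_{\st,G}(X;u,v)$ is part of the conjecture rather than a consequence of a finite orbit sum. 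Second, and more seriously, your ``key input'' --- the functional equation $B_{Q,G_Q}(u,v) = (-u)^{\dim Q}\det(\rho_Q)\cdot B_{Q,G_Q}(u^{-1},v)$ --- is asserted, not proved, and it does not follow from the tools you cite. In the non-equivariant proof of \eqref{e:bb}, the corresponding step is not a palindromicity statement about $h^*$-polynomials of faces: it rests on Batyrev--Borisov's duality theory of $B$-polynomials and $S$-polynomials for Eulerian posets and on Stanley's local $h$-vectors, applied to the inclusion-reversing bijection $Q \leftrightarrow Q^*$. Corollary~\ref{c:reflexive} gives only $\phi_P[t] = t^d\phi_P[t^{-1}]$ for the reflexive polytope $P$ itself (proper faces $Q$ of $P$ are generally not reflexive, so it says nothing about $\phi_Q[t]$), and Lemma~\ref{l:dual} is a statement about exterior powers, not about the mixed $(u,v)$ functional equation you need. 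Constructing equivariant analogues of the $B$- and $S$-polynomials and proving their duality --- with the $\det(\rho_Q)$ twists assembling correctly under induction across the pairing $Q \leftrightarrow Q^*$ --- is exactly the open content of the conjecture; your outline presupposes it rather than supplies it.
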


%\begin{remark}
%independence crepant resolution follows from motivic integration (equivariant version). 
%\end{remark}

\begin{remark}\label{r:surprise}
Suppose that there exist $G$-equivariant, crepant resolutions 
$\tilde{X} \rightarrow X$ and $\tilde{X}^* \rightarrow X^*$. 
The conjecture implies that 
if $H = \det(\rho)^{-1}(1)$, then the  (possibly singular) Calabi-Yau varieties  
$\tilde{X}/H$ and $\tilde{X}^*/H$ have mirror Hodge diamonds.
Explicitly, if $V^{\det(\rho)}$ denotes the $\det(\rho)$-isotypic component of a $G$-representation $V$, then 
\[
\dim H^{p,q}(\tilde{X}/H) = \dim ( H^{p,q}(\tilde{X})^G + H^{p,q}(\tilde{X})^{\det(\rho)})  = \dim H^{d - 1 - p,q}(\tilde{X}^*/H). 
\]
%The conjecture implies that 
%if $G$ acts via a 
%representation $\rho: G \rightarrow SL(M)$, then the singular Calabi-Yau varieties  
%$\tilde{X}/G$ and $\tilde{X}^*/G$ have mirror Hodge diamonds.
It would be interesting to know whether $\tilde{X}/H$ and $\tilde{X}^*/H$ are mirror in the usual sense i.e.
whether their associated stringy invariants satisfy \eqref{e:bb}. 
\end{remark}

\begin{remark}
Unlike in the case when $G$ is trivial, there may not exist a $G$-equivariant, crepant, toric morphism $\tilde{Y} \rightarrow Y$ such that $\tilde{Y}$ has orbifold singularities. 
%inducing a partial, crepant resolution
%$\tilde{X} \rightarrow X$. 
Hence one can not define $E_{\st, G}(X; u,v)$ in terms of the action of $G$ on the orbifold cohomology of an equivariant, partial, crepant resolution \cite{BDStrong}. 
% such that $\tilde{X}$ has orbifold singularities. 
%can't use orbifold cohomology because don't have existence of equivariant crepant resolutions. 
\end{remark}

\begin{remark}
More generally, Batyrev and Borisov proved their mirror symmetry result for Calabi-Yau complete intersections, and 
one could ask for an equivariant version in this case.  In fact, many of our results can be extended to the complete intersection case (see \cite[Section~6]{DKAlgorithm} in the case when $G$ is trivial), although we do not pursue this issue here. 
% the complete intersection case here.  
\end{remark}
%We note that Batyrev and Borisov proved a stronger result for complete intersections, and one could ask for an equivariant version in this case.  We do not pursue the complete intersection case here. 

A polytope $P$ is \define{smooth} if the toric variety determined by its normal fan is smooth. 
We first prove the conjecture when the polar reflexive polytopes $P$ and $P^*$ are smooth. 

\begin{corollary}\label{c:smoothmirror}
If $P$ and $P^*$ are polar, $G$-invariant, smooth, reflexive polytopes of dimension $d$, and $X$ and $X^*$ are corresponding $G$-invariant, non-degenerate hypersurfaces, then 
\[
H^{p,q}(X) = \det(\rho) \cdot H^{d - 1 - p,q}(X^*) \in R(G) \: \textrm{  for  } \: 0 \le p,q \le d - 1. 
\]
\end{corollary}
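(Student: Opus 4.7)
The plan is to prove the equivalent Hodge--Deligne polynomial identity
\[
E_G(X; u, v) = (-u)^{d-1} \det(\rho) \cdot E_G(X^*; u^{-1}, v),
\]
from which matching coefficients of $u^p v^q$ (using Example~\ref{e:smooth}, since $X$ and $X^*$ are smooth projective) yields the stated representation isomorphism. Because $P$ and $P^*$ are smooth, $Y$ and $Y^*$ are smooth projective toric varieties with $H^{p,q}(Y) = 0$ off the diagonal. The Lefschetz hyperplane theorem together with Poincar\'e duality on smooth $X$ then shows that $H^{p,q}(X)$ is concentrated on $\{p = q\} \cup \{p + q = d - 1\}$, and similarly for $X^*$. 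The substitution $(p, q) \mapsto (d-1-p, q)$ preserves this locus, so the identity is automatic off of it, and it suffices to check the two strata.

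On the diagonal with $2p \ne d - 1$, Lefschetz gives $H^{p,p}(X) = H^{p,p}(Y)$, and Proposition~\ref{p:reflexive} applied to the smooth reflexive $P^*$ (whose fan over faces equals the normal fan of $P$, i.e.\ the fan of $Y$) identifies this with $\phi_{P^*, p}$. The matching coefficient on the right is $\det(\rho) \cdot H^{d-1-p, p}_{\prim}(X^*)$, since the image $(d-1-p, p)$ lies on the middle row of $X^*$ off the diagonal; this is computed by Theorem~\ref{t:simple} (and Remark~\ref{r:symmetry}) as a sum over orbits of faces of $P^*$. Symmetrically, on the middle row with $p \ne q$, Theorem~\ref{t:simple} computes the left-hand $H^{p, d-1-p}_{\prim}(X)$, while the target coefficient $\det(\rho) \cdot H^{q,q}(X^*)$ equals $\det(\rho) \cdot \phi_{P, q}$ via Lefschetz on $X^*$ and Proposition~\ref{p:reflexive} applied to $P$. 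Using the reflexive palindromy $\phi_{P, i} = \phi_{P, d-i}$ of Corollary~\ref{c:reflexive}, both strata reduce to the \emph{same} equivariant combinatorial identity, namely, for each smooth reflexive $P$ and each $1 \le k \le d - 1$,
\[
\sum_{\substack{[Q] \in P/G \\ Q \ne P}} (-1)^{d - \dim Q} \Ind_{G_Q}^G \bigl[\det(\rho_Q) \cdot \phi_{Q, k}\bigr] = 0,
\]
and its analogue for $P^*$. The overlap case $p = q = (d-1)/2$ (only when $d$ is odd) splits cleanly as $H^{p,p}(X) = H^{p,p}(Y) \oplus H^{p,p}_{\prim}(X)$ and is the sum of the two identities above.

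The main obstacle is this combinatorial vanishing. I would attack it character-by-character: fix $g \in G$ and let $B$ denote the poset of $g$-fixed faces of $P$. By Lemma~\ref{l:poset}, the sign $(-1)^{d - \dim Q} \det(\rho)(g) \det(\rho_Q)(g)$ equals the M\"obius value $\mu_B(Q, P)$, so (adding back the $Q = P$ term $\mu_B(P,P) = 1$) the identity at $g$ becomes
\[
\sum_{Q \in B} \mu_B(Q, P) \, \phi_{Q, k}(g) = \phi_{P, k}(g).
\]
I would prove this by unwinding $\phi_{Q, k}(g)$ via Corollary~\ref{c:reciprocity} as an alternating sum of $g$-fixed interior lattice point counts in dilates of $Q$, then applying M\"obius inversion on the face poset $B$ of the $g$-fixed reflexive subpolytope, exploiting the palindromy forced by reflexivity. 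This is an equivariant refinement of the combinatorial identity underlying Batyrev and Borisov's mirror symmetry \cite{BBMirror}, and is the technical heart of the proof.
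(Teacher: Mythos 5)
Your reduction is sound, and it follows the same skeleton as the paper's own proof: Lefschetz plus Poincar\'e duality to concentrate $H^{p,q}$ on the diagonal and the middle row, Proposition~\ref{p:reflexive} together with Corollary~\ref{c:reflexive} for the non-primitive part, and Theorem~\ref{t:simple} for the primitive part. The genuine gap is that what you call the ``technical heart,'' the vanishing
\[
\sum_{\substack{[Q] \in P/G \\ Q \ne P}} (-1)^{d - \dim Q} \Ind_{G_Q}^{G} \bigl[ \det(\rho_Q) \cdot \phi_{Q,k} \bigr] = 0, \qquad 1 \le k \le d-1,
\]
is left unproven, and the route you sketch for it cannot succeed as described. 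Reciprocity (Corollary~\ref{c:reciprocity}), M\"obius inversion on the fixed-face poset (Lemma~\ref{l:poset}), and palindromy (Corollary~\ref{c:reflexive}) hold for \emph{every} reflexive polytope, yet the identity is false at that level of generality: take $G$ trivial, $d = 2$, $P = [-1,1]^2$, which is reflexive, simple, and even ``smooth'' in the paper's sense (its normal fan is the fan of $\P^1 \times \P^1$). Each edge $Q$ has three lattice points, so $\phi_{Q,1} = 1$, and the sum over proper faces equals $-4 \ne 0$ for $k = 1$. Any correct argument must therefore use an input your sketch never invokes.

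That input is unimodularity of the facets, and once invoked it makes the identity term-by-term trivial rather than deep. The hypothesis that $P^*$ is a smooth polytope means exactly that $P$ is non-singular: the vertices of each facet of $P$ form a basis of $M$. Hence every proper face $Q$ of $P$ is a standard (unimodular) simplex, $\BOX(Q)$ contains only the origin, and Proposition~\ref{p:simplex} gives $\phi_Q[t] = 1$, i.e.\ $\phi_{Q,k} = 0$ for all $k \ge 1$; every summand in your identity vanishes individually, and symmetrically for the faces of $P^*$ using that $P$ is smooth. This is precisely the paper's one-line argument: with all proper-face terms dead, Theorem~\ref{t:simple} collapses to $H^{d-1-p,p}_{\prim}(X) = \det(\rho) \cdot \phi_{P,p}$ (respectively $\det(\rho) \cdot \phi_{P,p+1}$) and the corollary follows by symmetry. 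Note also the terminology trap your sketch falls into: the paper's ``smooth polytope'' refers to the normal fan, whereas what kills $\phi_{Q,k}$ for the faces of $P$ is smoothness of the fan over the faces of $P$, i.e.\ smoothness of $P^*$. In the corollary both polytopes are smooth, so both vanishings are available, but a proof must call on the correct one, which is why your appeal to ``smooth reflexive $P$'' alone (as the square shows) is not enough.
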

\begin{proof}
We first compute the $G$-representation $H^* X = \bigoplus_{p,q} H^{p,q}(X)$. If $Y$ denotes the toric variety corresponding to the normal fan of $P$, then the Lefschetz hyperplane theorem and Poincar\'e duality imply that the non-primitive cohomology of $X$ satisfies $H^{p,p}(X) = H^{p,p}(Y)$ for $p \le \frac{d - 1}{2}$, 
and $H^{p,p}(X) = H^{p + 1,p + 1}(Y)$ for $p \ge \frac{d - 1}{2}$. Corollary~\ref{c:reflexive}
and Proposition~\ref{p:reflexive} then imply that  the non-primitive cohomology of $X$ is given by 
\[
H^{p,p}(X) = \phi_{P^*,p} \textrm{ for } p \le \frac{d - 1}{2}, 
\]
\[
H^{p,p}(X) = \phi_{P^*,p + 1} \textrm{ for } p \ge \frac{d - 1}{2}. 
\] 
On the other hand, every proper face $Q$ of $P$ is isomorphic to a standard simplex, and 
hence Proposition~\ref{p:simplex} implies that $\phi_Q[t] = 1$. Then Theorem~\ref{t:simple}, together with Corollary~\ref{c:reflexive}, implies that 
\[
H^{d - 1 - p, p}_{\prim}(X) = \det(\rho) \cdot  \phi_{P,p}  \textrm{ for } p \le \frac{d - 1}{2}, 
\]
\[
H^{d - 1 - p,p}_{\prim}(X) = \det(\rho) \cdot  \phi_{P,p + 1}  \textrm{ for } p \ge \frac{d - 1}{2}.
\]
The result now follows by symmetry. 
\end{proof}

%Our next goal is to establish the conjecture when $\dim X \le 3$, and
For the remainder of the section, we assume that 
 both $X$ and $X^*$ admit toric, crepant $G$-equivariant resolutions. 
That is, we assume that there exist $G$-equivariant  lattice polyhedral decompositions of the boundaries of $P$ and  $P^*$ which restrict to smooth, lattice triangulations on faces of $P$ and  $P^*$ of codimension at least $2$. Equivalently, we assume there exists $G$-equivariant, proper, crepant toric morphisms
$\tilde{Y} \rightarrow Y$ and $\tilde{Y}^* \rightarrow Y^*$, such that $\tilde{Y}$ and $\tilde{Y}^*$ are smooth away from the torus-fixed points. If $\tilde{X}$ (respectively $\tilde{X}^*$) denotes the closure 
of $X^\circ$ (respectively $(X^*)^\circ$) in $\tilde{Y}$ (respectively $\tilde{Y}^*$), then the induced morphisms $\tilde{X} \rightarrow X$ and $\tilde{X}^* \rightarrow X^*$ are $G$-equivariant, crepant resolutions of $X$ and
$X^*$ respectively. 

\begin{example}\label{e:boundary}
Since $P$ has a unique interior lattice point, Corollary~\ref{c:zerosmooth} implies that
\[
H^{d - 1,0}(\tilde{X}) = \det(\rho),  \: \: H^{0,0}(\tilde{X}) = 1,
\]
and 
\[
H^{p,0}(\tilde{X}) = 0 \textrm{ for } 0 < p < d - 1.
\]
%We also have 
%\[
%H^{0,0}(\tilde{X}) = 1. 
%\]
By symmetry, this establishes Conjecture~\ref{c:mirror} along the boundary of the Hodge diamond. 
\end{example}

If $Q$ is a proper face of $P$, then we let $Q^*$ denote the dual face in $P^*$. Since $\dim Q + \dim Q^* = d - 1$, we have a bijection between edges of $P$ and codimension $2$ faces of $P^*$.  
%There is natural bijection between the proper faces of $P$ and $P^*$, such that if 
We define
\[
\theta(P^*) = \sum_{\substack{[Q] \in P / G \\ \dim Q = 1}} \Ind_{G_Q}^G [ \det(\rho_Q) \cdot 
\chi^*_Q \cdot  \chi^*_{Q^*} ], 
\]
\[
\theta(P) = \sum_{\substack{[Q^*] \in P^* / G \\ \dim Q^* = 1}} \Ind_{G_{Q^*}}^G [ \det(\rho_{Q^*}) \cdot 
\chi^*_Q \cdot  \chi^*_{Q^*} ].
\]
Recall that $\Phi_k = \Phi(P)_k$ denotes the lattice points 
in $P$ which lie in the relative interior of a $k$-dimensional face of $P$. 
We now verify Conjecture~\ref{c:mirror} for two more pieces of the Hodge diamond.

%Recall from Definition~\ref{d:bball} that 
%\[
%\theta(r) = \theta_{\Sigma}(r) = \sum_{\substack{[Q] \in P / G \\ \dim Q = r}} \Ind_{G_Q}^G [ \det(\rho_Q) \cdot 
%\chi^*_Q \cdot  \chi^*_{\tau_Q} ],       % \chi_{\lef Q, \tau_Q \rig} ], 
%\]
%where $\chi^*_{\tau_Q}$   %$\chi_{\lef Q, \tau_Q \rig}$ 
%denotes the permutation representation induced by the action of $G_Q$ on 
%all rays in $\Sigma$ which lie in  the relative interior of the normal cone $\tau_Q$ to $Q$. 

\begin{corollary}
With the notation above, if $P$ is a reflexive polytope, and $X$ admits a crepant, toric resolution $\tilde{X}$, then, for $d \ge 3$, the non-primitive part of the $G$-representation $H^{1,1}(\tilde{X})$ 
equals
%is given by  
\[
H^{1,1}(\tilde{X}) = \theta(P^*) + \chi_{P^*} -  \chi_{\lef \Phi(P^*)_{d - 1} \rig} - \rho - 1, 
\]
and the primitive part of the $G$-representation $H^{d - 2,1}(\tilde{X})$ equals
%is given by  
\[
H^{d - 2,1}(\tilde{X}) = \det(\rho) \cdot  [\theta(P) +  \chi_{P} -  \chi_{\lef \Phi(P)_{d - 1} \rig} - \rho - 1 ]. 
\]
\end{corollary}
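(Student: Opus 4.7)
The plan is to apply Corollary~\ref{c:compute2} for the first formula and Corollary~\ref{c:real} for the second, and to translate the fan-theoretic invariants that appear there---the quantities $\theta_\Sigma(\cdot)$, $\chi^*_{\tau_Q}$, $\chi_{\lef S(\Sigma)\rig}$, and $\phi_{P,d-1}$---into polytope data of $P$ and $P^*$ via polar duality. Throughout, $\Sigma$ denotes the fan of $\tilde{Y}$; recall also the bijection $Q \leftrightarrow Q^*$ between proper faces of $P$ and $P^*$, satisfying $\dim Q + \dim Q^* = d - 1$ and $G_Q = G_{Q^*}$.

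The key preliminary observation is that the smoothness hypothesis on $\tilde{Y}$ away from torus-fixed points forces $\Sigma$ to use every lattice point lying in the relative interior of any face $Q^*$ of $P^*$ of codimension at least two: a smooth (``basic'') lattice simplex contains no lattice points other than its vertices, so every lattice point of $Q^*$ must appear as a vertex of its smooth triangulation, hence as a ray of $\Sigma$. It follows that for each face $Q$ of $P$ with $\dim Q \ge 1$ we obtain a $G_Q$-equivariant identification $\chi^*_{\tau_Q} = \chi^*_{Q^*}$; in particular $\theta_\Sigma(1) = \theta(P^*)$. Moreover $S(\Sigma)$ is precisely the set of lattice points of $\partial P^*$ lying in faces of codimension at least two, so the decomposition of $P^* \cap N$ into the origin and the sets $\Phi(P^*)_k$ for $0 \le k \le d - 1$ yields $\chi_{\lef S(\Sigma) \rig} = \chi_{P^*} - \chi_{\lef \Phi(P^*)_{d-1} \rig} - 1$. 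Substituting into Corollary~\ref{c:compute2} gives the first formula.

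For the second formula, Corollary~\ref{c:real} provides
\[
H^{d-2,1}_{\prim}(\tilde{X}) = \det(\rho) \cdot \bigl[ \phi_{P,d-1} - \chi_{\lef \Phi(P)_{d-1} \rig} \bigr] + \theta_\Sigma(d-2).
\]
Reflexivity of $P$ together with Corollary~\ref{c:reflexive} gives $\phi_{P,d-1} = \phi_{P,1}$, and expanding the defining identity of $\phi[t]$ in low degree yields $\phi_{P,1} = \chi_P - \rho - 1$; the first bracket therefore reduces to $\chi_P - \chi_{\lef \Phi(P)_{d-1} \rig} - \rho - 1$. The earlier identification $\chi^*_{\tau_Q} = \chi^*_{Q^*}$ still applies when $\dim Q = d - 2$, since $d \ge 3$ forces $\dim Q^* = 1 \le d - 2$. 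It remains to match $\theta_\Sigma(d-2)$ with $\det(\rho) \cdot \theta(P)$.

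The main obstacle is the dual-face determinant identity $\det \rho_Q = \det \rho \cdot \det \rho_{Q^*}$ for every proper face $Q$ of $P$. Granting it, the projection formula pulls $\det(\rho)$ out of the induction, and re-labeling $Q$ as $Q^*$ along the face-duality bijection transforms $\theta_\Sigma(d-2)$ into $\det(\rho) \cdot \theta(P)$. To establish the identity, I would reuse the calculation inside the proof of Lemma~\ref{l:poset}, which yields $\det(\rho) \cdot \det(\rho_Q) = \det (N_Q)_\C$, where $N_Q \subseteq N$ is the sublattice spanned by the normal cone $\tau_Q = \R_{\ge 0} \cdot Q^*$. The centroid $c$ of the vertices of $Q^*$ is a nonzero $G_Q$-invariant vector, and reflexivity of $P^*$ together with the properness of $Q^*$ ensures that $c$ lies outside the $(\dim Q^*)$-dimensional linear subspace spanned by $N^{Q^*}$; this yields a $G_Q$-equivariant splitting $(N_Q)_\C \cong N^{Q^*}_\C \oplus \C$ with trivial action on the second factor, so $\det (N_Q)_\C = \det \rho_{Q^*}$, completing the identity and the proof.
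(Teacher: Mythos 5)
Your proposal is correct and takes essentially the same route as the paper: both proofs apply Corollary~\ref{c:compute2} and Corollary~\ref{c:real}, identify the fan data ($\chi^*_{\tau_Q}$, $S(\Sigma)$, $\theta_{\Sigma}$) with polytope data of $P^*$ via the crepancy and smoothness hypotheses, and reduce $\phi_{P,d-1}$ to $\chi_P - \rho - 1$ using Corollary~\ref{c:reflexive}. The one step you spell out that the paper passes over in silence is the dual-face identity $\det(\rho_Q) = \det(\rho)\cdot\det(\rho_{Q^*})$ needed to rewrite $\theta_{\Sigma}(d-2)$ as $\det(\rho)\cdot\theta(P)$; your centroid-splitting argument for it is sound, and is the natural extension of the facet case ($\det\rho(g) = \det\rho_Q(g)$ for $g$ fixing a facet $Q$) invoked in the proof of Corollary~\ref{c:zerosmooth}.
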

\begin{proof}
Let $\tilde{Y} = \tilde{Y}(\Sigma)  \rightarrow Y = Y(\triangle)$ be an equivariant, crepant, toric morphism inducing $\tilde{X} \rightarrow X$. Here $\triangle$ is the fan over the faces of $P^*$, and 
 the rays of 
$\Sigma$ not lying in the interior 
of a maximal cone of $\triangle$ are in bijection with the lattice points on the boundary of $P^*$ 
not lying in the relative interior of a facet of $P$. 
Corollary~\ref{c:compute2} implies that the non-primitive part of the $G$-representation 
$H^{1,1}(\tilde{X})$  equals
\[
\theta(P^*) + \sum_{k = 0}^{d - 2} \chi_{\lef \Phi(P^*)_{k} \rig}  - \rho.  
\]
Since $P^*$ contains a unique interior lattice point, the latter sum is equal to 
\[
\theta(P^*) + \chi_{P^*} - 1 - \chi_{\lef \Phi(P^*)_{d - 1} \rig}  - \rho, 
\]
as desired. 

On the other hand, by Corollary~\ref{c:real}, the primitive part of the $G$-representation $H^{d- 2,1}(\tilde{X})$ equals 
\[
\det(\rho) \cdot [ \phi_{P,d - 1} -  \chi_{\lef \Phi(P)_{d - 1} \rig} + \theta(P)].
\]
By Corollary~\ref{c:reflexive}, $\phi_{P,d - 1} = \phi_{P,1} = \chi_P - \rho - 1$. 
%By Corollary~\ref{c:reciprocity}, $\phi_{P,d - 1} = \chi^*_{2P} - \chi^*_P \cdot (\rho + 1)$, and, by Corollary~\ref{c:reflexive}, $\chi^*_{2P} = \chi_P$ and $\chi^*_P = 1$. 
This completes the proof. 
\end{proof}

As an immediate consequence we obtain a positive answer to Conjecture~\ref{c:mirror} in the case when 
$X$ and $X^*$ admit toric, crepant $G$-equivariant resolutions, and $\dim X \le 3$.  

\begin{corollary}\label{c:dim3}
Let $P$ and $P^*$ be polar, $G$-invariant, reflexive polytopes of dimension $d \le 4$, and let $X$ and $X^*$ be corresponding $G$-invariant, non-degenerate hypersurfaces. If there exist $G$-equivariant, crepant, toric resolutions
 $\tilde{X} \rightarrow X$ and $\tilde{X}^* \rightarrow X^*,$ then 
 \[
H^{p,q}(\tilde{X}) = \det(\rho) \cdot H^{d - 1 - p,q}(\tilde{X}^*) \in R(G) \: \textrm{  for  } \: 0 \le p,q \le d - 1. 
\]
\end{corollary}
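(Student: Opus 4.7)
The plan is to verify the mirror identity $H^{p,q}(\tilde X) = \det(\rho) \cdot H^{d-1-p, q}(\tilde X^*)$ cell by cell, leveraging the smallness of the Hodge diamond in low dimension and assembling the explicit cohomological computations already established.

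\emph{Step 1: symmetry reduction.} The Hodge symmetry $H^{p,q} = H^{q,p}$ (Remark~\ref{r:symmetry}) and Poincar\'e duality $H^{p,q} = H^{d-1-p, d-1-q}$ (Example~\ref{e:smooth}), applied to both $\tilde X$ and $\tilde X^*$, are compatible with the proposed identity (using $\det(\rho)^2 = 1$), so only one cell per orbit need be checked. For $d \le 4$ this leaves the boundary cells $(p,0)$ with $0 \le p \le d-1$, and the interior cells $(1,1)$ (for $d = 3$ or $d = 4$) and $(2,1)$ (for $d = 4$).

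\emph{Step 2: boundary.} Example~\ref{e:boundary} gives $H^{0,0}(\tilde X) = 1$, $H^{d-1,0}(\tilde X) = \det(\rho)$, and $H^{p,0}(\tilde X) = 0$ for $0 < p < d-1$, with identical formulas for $\tilde X^*$. Substituting into the mirror identity and using $\det(\rho)^2 = 1$ verifies it at every boundary cell.

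\emph{Step 3: interior cells.} I would appeal to the corollary immediately preceding the statement, which supplies
\begin{align*}
H^{1,1}_{\text{non-prim}}(\tilde X) &= \theta(P^*) + \chi_{P^*} - \chi_{\lef \Phi(P^*)_{d-1}\rig} - \rho - 1, \\
H^{d-2,1}_{\prim}(\tilde X) &= \det(\rho)\cdot \bigl[\theta(P) + \chi_P - \chi_{\lef \Phi(P)_{d-1}\rig} - \rho - 1\bigr],
\end{align*}
together with the mirror formulas for $\tilde X^*$ obtained by exchanging $P \leftrightarrow P^*$. These two expressions differ precisely by the swap $P \leftrightarrow P^*$ and a factor of $\det(\rho)$. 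For each remaining cell one splits $H^{p,q}(\tilde X)$ into its Lefschetz primitive and non-primitive parts and notes which formula applies. When $d = 4$ and $(p,q) = (1,1)$, the inequality $p + q < d - 1$ makes $H^{1,1}(\tilde X)$ purely non-primitive; its mirror $H^{2,1}(\tilde X^*)$ sits in middle degree, but $\tilde Y^*$ is smooth projective toric and carries no odd cohomology, so this piece is purely primitive. Applying the first formula to $\tilde X$ and the second (with $P \leftrightarrow P^*$) to $\tilde X^*$ yields equal expressions after cancelling $\det(\rho)^2 = 1$. The case $d = 4$, $(p,q) = (2,1)$ is symmetric by exchanging $\tilde X$ and $\tilde X^*$. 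When $d = 3$ and $(p,q) = (1,1) = (d-2,1)$ both formulas contribute (the cell is in middle degree), and summing them and doing the same for $\tilde X^*$ one sees that the primitive part of $H^{1,1}(\tilde X)$ becomes the non-primitive part of $\det(\rho)\cdot H^{1,1}(\tilde X^*)$ and vice versa.

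\emph{Main obstacle.} Since all of the cohomological heavy lifting is already contained in Example~\ref{e:boundary} and the previous corollary, no genuinely new ingredient is required. The most delicate point is the Lefschetz bookkeeping in middle-degree cells, particularly $(2,1)$ for $d = 4$, where the vanishing of the non-primitive contribution relies on the absence of odd cohomology in smooth projective toric varieties. One should also verify that the combinatorial formulas above depend only on the polytope data, and not on the particular $G$-equivariant crepant toric resolution chosen, so that they can be inserted uniformly into the identity; this is the equivariant analog of the crepant-birational invariance of stringy cohomology implicit in the definition of $E_{\st, G}$ used in Section~\ref{s:mirror}.
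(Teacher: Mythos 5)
Your proposal is correct and takes essentially the same route as the paper: Corollary~\ref{c:dim3} is stated there as an immediate consequence of Example~\ref{e:boundary} (the boundary of the Hodge diamond) together with the preceding corollary computing the non-primitive part of $H^{1,1}(\tilde{X})$ and the primitive part of $H^{d-2,1}(\tilde{X})$, which are exactly the ingredients you assemble. Your explicit bookkeeping --- the symmetry reduction to representative cells, the purely non-primitive versus purely primitive dichotomy at $(1,1)$ and $(2,1)$ when $d = 4$, and the mixed middle-degree case $(1,1)$ when $d = 3$ --- simply fills in the verification the paper leaves implicit.
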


\excise{
Lastly, we consider the following example. %(cf. Example~\ref{e:Fermat}, Example~\ref{e:Fermat2}). 
Let $G = \Sym_{d + 1}$ act on $M' = \Z^{d + 1}$ by permuting co-ordinates, and let $P$ be the $d$-dimensional simplex with vertices $\{ (d + 1)e_0, \ldots, (d + 1)e_d \}$. 
Then the Fermat hypersurface $X = \{ x_0^{d + 1} + \cdots + x_d^{d + 1} = 0 \} \subseteq \P^d$ of degree $d + 1$ is a smooth, non-degenerate, $G$-invariant  hypersurface corresponding to the reflexive polytope $P$. 
%Let $G = \Sym_{d + 1}$ act on $M = \Z^{d + 1}/\Z(1, \ldots, 1)$ by permuting co-ordinates, and let $P$ be the $d$-dimensional simplex with vertices given by the images of $\{ (d + 1)e_0, \ldots, (d + 1)e_d \}$. 
%Then the Fermat hypersurface $X = \{ x_0^{d + 1} + \cdots + x_d^{d + 1} = 0 \} \subseteq \P^d$ of degree $d + 1$ is a smooth, non-degenerate, $G$-invariant  hypersurface corresponding to the reflexive polytope $P$.  
The representation of $G$ on the non-primitive cohomology of $X$ is given by $H^{p,p}(X) = 1$ for 
$0 \le p \le d - 1$.  By Example~\ref{e:Fermat2}, the representation of $G$ on $H^{p, d - 1 - p}_{\prim}(X)$
is the tensor product of the sign representation and the 
 permutation representation of $\Sym_{d + 1}$ on the set 
\[
\{ (a_0, \ldots, a_d) \in \Z^{d + 1} \mid 0 < a_i < d + 1, \sum_{i = 0}^d a_i = (p + 1)(d + 1) \}. 
\]

On the other hand, if we let $G = \Sym_{d + 1}$ act on $N = \Z^{d + 1}/\Z(1, \ldots, 1)$ by permuting co-ordinates, then the polar reflexive polytope 
$P^*$ is image of the standard simplex in $\Z^{d + 1}$. 
Let $H$ be the quotient of the finite group $\{ (\alpha_0, \ldots, \alpha_d) \in  (\Z/(d + 1)\Z)^{d + 1} \mid \sum_{i = 0}^d \alpha_i = 0 \}$ by the diagonally embedding subgroup $\Z/(d + 1)\Z$. Then $(\alpha_0, \ldots, \alpha_d) \in H$ acts on 
$\P^d$ by multiplying co-ordinates by 
$(e^{\frac{2 \pi i \alpha_0}{d + 1}}, \ldots, e^{\frac{2 \pi i \alpha_d}{d + 1}})$. 
Moreover, the hypersurface
$Z_\psi = \{ x_0^{d + 1} + \cdots + x_d^{d + 1} = \psi x_0 \cdots x_d \} \subseteq \P^d$ is $H$-invariant.
For a general choice of $\psi$, the hypersurface  $X^* = Z_\psi/H$ in $\P^d/H$ is a non-degenerate, $G$-invariant  hypersurface corresponding to the reflexive polytope $P^*$. 

 }

\excise{

\section{Notes}

\excise{

\begin{corollary}
If $P$ is simple, then 
\[
H^{d - 1,0}(X) = \det(\rho) \cdot \chi_P^*. 
\]
In particular, 
\[
\sum_{m \ge 0} H^{d - 1,0}(X_{mP}) t^m = \det(\rho) \cdot  \phi[t] \cdot \sum_{m \ge d + 1} \Sym^{m - d - 1} M'_\C \;  t^{m}. 
\]
\end{corollary}

\begin{remark}
We have some strange reciprocity between $a(m) =   \lef H^{d - 1,0}(X_{mP}) , 1 \rig  = \dim H^{d - 1,0}(X_{mP})^G$ and $b(m) = \lef \chi_{mP} , 1 \rig = \dim H^0 (Y, L^m)^G$ i.e. $(-1)^d a(-m) = b(m)$.  
\end{remark}

\begin{remark}
In the case of a Fermat hypersurface of degree $m$ in $\P^d$, $H^{d - 1, 0}(X) = \sgn \cdot \Sym^{m - d - 1} V$, where $\sgn$ is the sign representation of $\Sym_{d + 1}$, and $\Sym_{d + 1}$ acts on $V = \C^{d + 1}$ by permuting co-ordinates. In this case, 
$\lef H^{d - 1, 0}(X)  , \sgn \rig$ equals the number of partitions of $m$ with $d + 1$ parts, 
and $\lef H^{d - 1, 0}(X)  , 1 \rig$ equals the number of partitions of $m$ with $d + 1$ distinct parts. 
\end{remark}

\begin{example}[Fermat curves]
Let $d = 2$ and let $G = \Sym_3$ act on $\P^2$ by permuting co-ordinates. Let $C_m$ denote the Fermat curve $\{ x^m + y^m + z^m = 0 \}$ of degree $m$. By the remark above, $H^{1, 0}(C_m) = H^{0,1}(C_m) = \sgn \cdot \Sym^{m - 3} V$, where $\Sym_3$ acts on $V = \C^3$ by permuting co-ordinates.  
On explicitly computes that if $\nu_r(m)$ denotes the function with value $1$ if $r | m$, and value $0$ otherwise, then
\[
H^{1,0}(C_{m}) = \frac{(m - 1)(m - 5)}{12} + \frac{\nu_2(m)}{4} + \frac{\nu_3(m)}{3} + 
\]
\[
\left[ \frac{m^2 - 1}{12} - \frac{\nu_2(m)}{4} + \frac{\nu_3(m)}{3} \right] \sgn + 
\left[ \frac{(m - 1)(m - 2)}{6} - \frac{\nu_3(m)}{3}\right] \chi^{(2,1)}. 
\] 
In particular, $C_m/G$ is a smooth, rational curve for $m \le 5$ (cf. \cite[Example~1.3]{DLPurity}). 
\comment{quotient singularities are normal}
\end{example}

Reference: \cite[Proposition~4.6]{BNCombinatorial}
Introduce notation $\Pi(P)$ for lattice points in the interior of the parallelogram spanned by the vertices of $P$ in $M'_\R$. 

\begin{theorem}
If $P$ is a simplex, then $\det(\rho) \cdot H^{p, d - 1 - p}_{\prim} (X)$ is isomorphic to the permutation representation induced by the action of $G$ on $\{ v \in \Pi(P) \mid u(v) = p + 1 \}$. In particular, 
$\det(\rho) \cdot H^{d - 1}_{\prim} (X)$ is the permutation representation of $G$ on $\Pi(P)$. \comment{Consider introducing notation for permutation representation (see \cite{SteSome}).}
\end{theorem}

\begin{remark}
With the notation above, if $P$ is a simplex, then the multiplicity of $\det(\rho)$ in $H^{d - 1}_{\prim} (X)$ is equal to the number of $G$-orbits in $\Pi(P)$, and the multiplicity of the trivial representation in $H^{d - 1}_{\prim} (X)$ is equal to the number of $G$-orbits in $\Pi(P)$ with isotropy group contained in 
$\{ g \in G \mid \det\rho(g) = 1 \}$. \comment{to do: deduce the cohomology of $X/G$, apply to Fermat example, discuss previous work}
\end{remark}

\comment{to do: find some nice geometric examples of $X/G$}

\begin{proof}
After possibly applying induction and tensor product with a character, all representations are permutation representations. Recall explicit description of $\phi[t]$ from previous paper. Enough to consider the contribution of a fixed $g$ in $G$, and a Box element $v = \sum_{i = 0}^d \alpha_i v_i$ satisfying $gv = v$ and $u(v) = p + 1$ for some $p \ge \frac{d - 1}{2}$ \comment{need notation}. 
%Without loss of generality, $\alpha_i = 0$ for $0 \le i < r$, 
%and $\alpha_i > 0$ for $r \le i \le d$. 
%Let $M_1$ (respectively $M_2$) be the span in $M'_\R$ of $\{ v_i \mid 0 \le i < r \}$ (respectively $\{ v_i \mid r \le i \le d \}$), so that $M'_\R = M_1 \oplus M_2$ as a $\lef g \rig$-representation. 
%Then $g$ acts on $M_1$ via a permutation matrix corresponding to a partition 
%$\lambda$ of $r$ with cycle type $(\lambda_1, \ldots, \lambda_s)$. Without loss of generality, we may assume that 
%$\lambda = (1,2,\cdots ,\lambda_1)(\lambda_1 + 1, \cdots ,\lambda_2) \cdots ,r - 1)$. 
Let $V_1, \ldots, V_s$ denote the $g$-orbits of $\{ v_i \mid \alpha_i = 0 \}$, and 
%Then $g$ permutes the vertices $\{ v_i \mid \alpha_i = 0 \}$ 
for any subset $I \subseteq \{ 1, \ldots , s \}$, let $Q_I$  be the face of $P$ obtained by taking the convex hull of  
$\{ v_j \mid v_j \notin \bigcup_{i \in I} V_{i} \}$. In particular, $Q_\emptyset = P$. 
%for $1 \le i_1 < i_2 < \cdots < i_l \le s$, let $Q_{i_1, i_2, \ldots, i_l}$ be the face of $P$ obtained by taking the convex hull of  
%$\{ v_i \mid v_i \notin \{ V_{i_1}, \ldots, V_{i_l} \} \}$. 
%Then the faces $\{ Q_{i_1, i_2, \ldots, i_l} \mid 0 \le l \le s \}$ are precisely the faces of $P$ which contain $v$ and are fixed by $g$. Since $d - \dim$ 
Then the faces $\{ Q_I \mid I \subseteq \{ 1, \ldots , s \} \}$ are precisely the faces of $P$ which contain $v$ and are fixed by $g$. Since $d - \dim Q_I = \sum_{i \in I} |V_i|$, and 
$\det \rho_{Q_I} (g) = (-1)^{\sum_{i \in I} (|V_i| - 1)} \det \rho(g)$, we conclude that the contribution of $v$ to the value of the character of $H^{p, d - 1 - p}_{\prim} (X)$ at $g$  equals 
\[
\sum_{I \subseteq \{ 1, \ldots , s \}} (-1)^{| I |}  \det \rho(g) =  \left\{\begin{array}{cl} %H^{p+ 1, p + 1}(H_c^{n + p + 1} T) = 
\det \rho(g) & \text{if } s = 0 ; \\ 0 & \text{otherwise}. \end{array}\right. 
\]
\end{proof}

\comment{What about standard reflexive simplex?}

Result: for any $P$,  $e^{p,0}_G(X^\circ) = (-1)^{d - 1} \sum_{\dim Q = p + 1} \det(\rho) \cdot \chi^*_Q$ for $p > 0$. 

for $p = d - 1$, this is obvious from what we've proved, for $0 < p < d - 1$ this is word-for-word 
\cite[Proposition~5.8]{DKAlgorithm}. 

Note: if $P$ is simple, then $e^{p,0}_G(X^\circ) = (-1)^{d - 1} h^{p,0}(H_c^{d - 1}(X^\circ))$. 
(to do: can you use this to compute $H^1$ of the quotient $X^\circ/G$? Probably not)
(Is it true that no odd cohomology away from middle for $X/G$ if $d  - 1 > 1$?)

Next: study $e^{0,0}_G(X^\circ)$: easy to show that
\[
(-1)^{d - 1} e^{0,0}_G(X^\circ) = \det(\rho) \cdot (\Phi - 1), 
\]
where $\Phi$ is the permutation representation induced by the action of $G$ on the lattice points in the $1$-skeleton of $P$. 
}

should be able to easily extend to complete intersections

prove an equivariant analogue of \cite[Theorem~5.6]{DKAlgorithm}. That is, assume $P$ is simple. Note that $e^{p,q}(X^\circ) = e^{q,p}(X^\circ)$ by definition, and if we know  $e^{p,q}(X^\circ) $ for $p \ne q$, then we get $e^{p,p}(X^\circ)$ from Theorem~\ref{t:xycharacteristic}. Hence it is enough to consider 
$e^{p,q}(X^\circ)$ for $p > q$. In this case, 
\[
e^{p,q}(X^\circ) = (-1)^{d + p + q} \sum_Q \sum_{Q'} (-1)^{\dim Q'} \Ind_{G_{Q'}}^{G} [ \det(\rho_{Q'})
\cdot \phi_{Q', p + 1} ]
\]

Here $Q$ is the representative of a $G$-orbit of faces of dimension $p + q + 1$, and $Q'$ is a $G_Q$-orbit of faces of $Q$. 

In the simple case, should include some low-dimensional examples for cohomology of compactification. 

\comment{to do: look at Jacobian rings}
}

\bibliographystyle{amsplain}
\bibliography{alan}

\end{document}